\newtheorem{theorem}{Theorem}[section]
\theoremstyle{definition}
\newtheorem{definition}[theorem]{Definition}
\newtheorem{proposition}[theorem]{Proposition}
\newtheorem{lemma}[theorem]{Lemma}
\theoremstyle{definition}
\newtheorem{corollary}[theorem]{Corollary}
\newtheorem{question}[theorem]{Question}
\theoremstyle{remark}
\newtheorem{remark}[theorem]{Remark}
\theoremstyle{notation}
\newtheorem{notation}[theorem]{Notation}
\numberwithin{equation}{section}
\newcommand{\MCG}{{\mathcal MCG}}
\def\ZZ{{\mathbb Z}}
\renewcommand{\d}{\partial}
\newcommand{\s}{\sigma}
\title{Braids, fibered knots, and concordance questions}
\author[D.H.]{Diana Hubbard}
\address{Department of Mathematics, Brooklyn College, Brooklyn, NY 11210}
\email{diana.hubbard@brooklyn.cuny.edu}
\author[K.K.]{Keiko Kawamuro}
\address{Department of Mathematics, University of Iowa, Iowa City, Iowa 52242}
\email{keiko-kawamuro@uiowa.edu}
\author[F.C.K.]{Feride Ceren Kose}
\address{Department of Mathematics, UT Austin, Austin, TX 78712}
\email{fkose@math.utexas.edu}
\author[G.M.]{Gage Martin}
\address{Department of Mathematics, Boston College, Chestnut Hill, MA 02467}
\email{martaic@bc.edu}
\author[O.P.]{Olga Plamenevskaya}
\address{Department of Mathematics, Stony Brook University, Stony Brook, NY 11794}
\email{olga@math.stonybrook.edu}
\author[K.R.]{Katherine Raoux}
\address{Department of Mathematics, Michigan State University, East Lansing, MI 48824}
\email{raouxkat@msu.edu}
\author[L.T.]{Linh Truong}
\address{School of Mathematics, Institute for Advanced Study, Princeton, NJ 08540}
\email{ltruong@math.ias.edu}
\author[H.T.]{Hannah Turner}
\address{Department of Mathematics, UT Austin, Austin, TX 78712}
\email{hannahturner@math.utexas.edu}
\thanks{D.H. was supported in part by an AMS-Simons travel grant.  K.K. was partially supported by a Simons Foundation Collaboration Grant for Mathematicians. O.P. was partially supported by NSF grant DMS-1906260 and a Simons Fellowship.  L.T. was partially supported by NSF postdoctoral fellowship under grant DMS-1606451 and the Institute for Advanced Study. H.T. was supported by NSF graduate research fellowship under grant no. DGE-1610403.}
\begin{document}

\bibliographystyle{alpha}

\maketitle

\begin{abstract} Given a knot in $S^3$, one can associate to it a surface diffeomorphism in two different ways.  
First, an arbitrary knot in $S^{3}$ can be represented by braids, which can be thought of as diffeomorphisms of punctured disks. Second,
if the knot is fibered -- that is, if its complement fibers over $S^1$ --  one can consider the monodromy of the fibration. 
One can ask to what extent properties of these surface diffeomorphisms dictate topological properties of the corresponding knot.
In this article we collect observations, conjectures, and questions addressing this, from both the braid perspective and the 
fibered knot perspective. We particularly focus on exploring whether properties of the surface diffeomorphisms relate to four-dimensional topological properties of knots such as the slice genus.

\end{abstract}

\section{Introduction}

In this article, we will discuss a number of results, observations and questions concerning the relation between topological properties of knots and certain surface 
diffeomorphisms associated to these knots. The relevant surface diffeomorphisms arise in two ways. \begin{enumerate}
\item For a fibered knot $K \subset S^3$, we consider the fibration of the knot complement $S^3 \setminus K$ over $S^1$. The fiber surface and the monodromy of this fibration are uniquely determined by $K$ (see \cite[p.34]{EN85}, \cite[Corollary 8.3]{Sa12}). %Again, one can ask how the dynamical properties of the monodromy relate to the genus or slice genus of the knot, or to other knot invariants. 
\item 
An arbitrary knot in $S^3$ can be (non-uniquely) represented as a braid closure, and we can study the corresponding braid monodromy. Recall that Artin's braid group $B_n$ on $n$-strands  \cite{Ar25} is
the mapping class group of an $n$-punctured disc $D_n$.
\end{enumerate}
Working from these different perspectives, we will explore how the fiber monodromy or braid monodromy is related to invariants of the underlying knot, such as the Seifert genus $g_3(K)$ and the slice genus $g_4(K)$. As usual, $g_3(K)$ is the smallest genus of a Seifert
surface the knot $K$ bounds in the 3-sphere, and $g_4(K)$ is the smallest genus
of a smooth oriented embedded surface that the knot $K$ bounds in the 4-ball. 

Intuitively, the more full twists that a braid $\beta$ contains, the more complicated its closure $\widehat{\beta} = K$ is. Let $\Delta^2 = (\sigma_1 \dots \sigma_{n-1})^n$ denote a full twist on $n$ strands. We expect the braid closure's genus $g_3(K)$ and slice genus $g_4(K)$ to grow larger as more full twists are added to the braid. This idea is made precise in the following proposition.

%\todo[noline]{Changed the prop statement to reflect the referee's improvement}

\begin{restatable}{proposition}{asymp} \label{asymp} Let $\beta \in B_m$ be a braid, and let $K_n$ be the
closure of the braid $\beta \Delta^{2n}$.  When $n$ becomes large, $g_3(K_n)=g_4(K_n)$ and this value grows roughly as $\frac{1}{2}
n m^2$. 
\end{restatable}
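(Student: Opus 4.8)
The plan is to observe that adding enough full twists turns $\beta\Delta^{2n}$ into a \emph{positive} braid, and then to read off both $g_3(K_n)$ and $g_4(K_n)$ directly from a positive braid diagram. So first I would positivize the braid: since $\Delta^2$ is central in $B_m$ and is itself a positive braid, the Garside normal form $\beta = \Delta^{r}P$ (with $P$ a positive braid word and $r\in\ZZ$) gives $\beta\Delta^{2n} = \Delta^{\,r+2n}P$, which is a positive braid word as soon as $r+2n\ge 0$; hence there is an $n_0$ so that $\beta\Delta^{2n}$ is positive for all $n\ge n_0$. Because $\Delta^2=(\sigma_1\cdots\sigma_{m-1})^m$ is a pure braid, $K_n$ has the same number of components as $\widehat\beta$; I will treat the case where that number is one, so $K_n$ is a knot, the link case being identical with ``genus'' replaced by ``$-\tfrac12\chi$''.

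Next comes the Euler characteristic bookkeeping. For $n\ge n_0$, apply Seifert's algorithm to the closed positive braid on $m$ strands: it has $m$ Seifert circles and one band per crossing, and the number of crossings equals the exponent sum $e(\beta\Delta^{2n}) = e(\beta) + n\,m(m-1)$, using $e(\Delta^2)=m(m-1)$. Thus the resulting Bennequin surface $\Sigma_n$ satisfies $\chi(\Sigma_n) = m - e(\beta) - n\,m(m-1)$, so its genus is $\tfrac12\big(1-\chi(\Sigma_n)\big) = \tfrac{m(m-1)}{2}\,n + \tfrac12\big(e(\beta)-m+1\big)$. By Stallings' fibration theorem the closure of a positive braid is fibered with fiber exactly $\Sigma_n$, and since fiber surfaces realize the Seifert genus, $g_3(K_n)$ equals this quantity.

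For the slice genus, the slice--Bennequin inequality (Rudolph, via Kronheimer--Mrowka, or via Rasmussen's $s$-invariant) applied to the braid representative $\beta\Delta^{2n}$ gives $2g_4(K_n)\ge e(\beta\Delta^{2n})-m+1 = 2g_3(K_n)$, while $g_4(K_n)\le g_3(K_n)$ holds trivially; therefore $g_4(K_n)=g_3(K_n)$ for $n\ge n_0$. Letting $n\to\infty$, this common value is $\tfrac{m(m-1)}{2}\,n + O(1)$, which grows like $\tfrac12\,n m^2$, as claimed.

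The only genuinely deep ingredient is the slice--Bennequin inequality, which is gauge-theoretic (or Khovanov-homological) in origin and must be quoted; everything else --- positivizing via the Garside normal form, the crossing count, and Stallings' fiberedness --- is essentially formal. The main points to be careful about are the knot-versus-link bookkeeping and the precise meaning of ``roughly $\tfrac12 nm^2$'': the leading term in $n$ is $\tfrac{m(m-1)}{2}\,n$, together with a bounded ($n$-independent) correction $\tfrac12\big(e(\beta)-m+1\big)$ that depends only on $\beta$.
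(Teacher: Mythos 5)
Your proof is correct and follows essentially the same route as the paper's: make $\beta\Delta^{2n}$ positive for all large $n$ (the paper does this by noting $\Delta^2\sigma_i^{-1}$ is positive, you via the Garside normal form --- both fine), then use that positive braid closures satisfy $g_3=g_4$ with the value determined by the exponent sum and strand number. The only cosmetic difference is that the paper reads off the value from $\tau$ of a positive braid closure, whereas you compute $g_3$ from the Bennequin fiber surface and deduce $g_4=g_3$ from the slice--Bennequin inequality; these rest on the same underlying gauge-theoretic input.
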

Proposition~\ref{asymp} gives us an asymptotic understanding of the behavior of the genus and slice genus as full twists are added. As for non-asymptotic behavior, for braids of three or more strands, we use Sato's work \cite{Sa18} to show concatenating full twists to a braid will change the concordance class of its closure. 
\begin{restatable}{proposition}{propsato} \label{prop:sato} Let $\beta \in B_{m}$ be a braid, $m \geq 3$, and $\beta_{k} = \beta\Delta^{2k}$, $k \in \mathbb{Z}, k > 0$. 
Then all braid closures  $\widehat\beta$,  $\widehat{\beta_k}$, $k >0$ lie in pairwise distinct concordance classes. 
\end{restatable}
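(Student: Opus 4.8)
The plan is to produce a smooth concordance invariant that strictly increases each time a positive full twist $\Delta^2$ is appended, and for this I would use the $\nu^+$-invariant (in the sense of Hom--Wu, a lower bound for $g_4$ and, crucially for us, a smooth concordance invariant: smoothly concordant knots have equal $\nu^+$) together with Sato's full twist inequality from \cite{Sa18}. A convenient consequence of that inequality is the following: if a knot $K_+$ is obtained from a knot $K$ by inserting one positive full twist along a disc that meets $K$ transversally in $m$ coherently oriented points, then
\[
\nu^+(K_+) \;\ge\; \nu^+(K) + \left\lfloor \tfrac{m-1}{2}\right\rfloor ,
\]
so in particular $\nu^+(K_+) \ge \nu^+(K) + 1$ whenever $m \ge 3$. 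Note it is exactly this coefficient that forces the hypothesis $m\ge 3$: for $m \le 2$ it vanishes and the argument gives nothing (consistent with the fact that, e.g., closures of $2$-braids behave quite differently).

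First I would set $\beta_0 = \beta$ and check that, for every $k \ge 1$, the closure $\widehat{\beta_k} = \widehat{\beta\Delta^{2k}}$ is obtained from $\widehat{\beta_{k-1}}$ by precisely such a move. Indeed, drawing $\widehat{\beta\Delta^{2k}}$ as the closure of a braid in a solid torus, one reads off a meridional disc $D$ of that solid torus (bounded by an unknot) which meets $\widehat{\beta_{k-1}}$ in $m$ points, all of the same sign because all strands of a braid are coherently oriented; performing one positive full twist along $D$ yields $\widehat{\beta_k}$. Since $m \ge 3$, Sato's inequality gives $\nu^+(\widehat{\beta_{k-1}}) < \nu^+(\widehat{\beta_k})$ for every $k \ge 1$, hence a strictly increasing sequence
\[
\nu^+(\widehat{\beta}) < \nu^+(\widehat{\beta_1}) < \nu^+(\widehat{\beta_2}) < \cdots .
\]
The knots $\widehat\beta,\widehat{\beta_1},\widehat{\beta_2},\dots$ therefore have pairwise distinct values of the concordance invariant $\nu^+$, so they lie in pairwise distinct smooth concordance classes, which is the assertion of the proposition.

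The step I expect to be the only real obstacle is the translation verification in the middle paragraph: confirming that appending $\Delta^2$ to an $m$-strand braid is genuinely a \emph{positive} full twist along an unknotted disc meeting the closure in $m$ coherently oriented points, and that the sign conventions are arranged so that Sato's inequality for positive twists — rather than its (weaker or absent) negative-twist analogue — is the one that applies. Once that is pinned down, the remainder is formal: iterate the inequality and invoke concordance-invariance of $\nu^+$. It is worth remarking that the same scheme would also recover the monotone growth of $g_3$ and $g_4$ in Proposition~\ref{asymp} via $\nu^+ \le g_4 \le g_3$, but here we only need the bare strict inequality of $\nu^+$ values to separate concordance classes.
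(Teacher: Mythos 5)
Your overall strategy --- iterate a full-twist statement for $\nu^+$ from \cite{Sa18} one twist at a time --- is the same as the paper's, but the specific inequality you extract from Sato's work is false, and the argument collapses there. The invariant $\nu^+$ is nonnegative on every knot, is not a homomorphism, and does \emph{not} increase under insertion of a positive full twist. Concretely, take $\beta = (\sigma_1\sigma_2)^{-4} \in B_3$, so that $\widehat{\beta} = -T_{3,4}$ and $\widehat{\beta_1} = \widehat{(\sigma_1\sigma_2)^{-1}}$ is the unknot $U$; here $m=3$, yet your inequality would force $0 = \nu^+(U) \geq \nu^+(-T_{3,4}) + 1 \geq 1$. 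More generally, for a sufficiently negative braid the closure remains a negative braid closure after adding several positive full twists, and all of those closures have $\nu^+ = 0$, so the numerical values of $\nu^+$ cannot separate them. (The same example shows that your closing remark about recovering monotone growth of $g_4$ also fails: the slice genus drops from $3$ to $0$.)

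The fix --- and what the paper actually does --- is to use Sato's result at the level of the partial order $<_{\nu^+}$ on the concordance group modulo $\nu^+$-equivalence (Proposition 1.5 and Theorem 1.6(2) of \cite{Sa18}). That is a statement about relative invariants of the form $\nu^+\bigl(\widehat{\beta_1} \# (-\widehat{\beta})\bigr)$ rather than about the difference $\nu^+(\widehat{\beta_1}) - \nu^+(\widehat{\beta})$, and it is strictly finer: in the example above $\nu^+(U) = \nu^+(-T_{3,4}) = 0$, but $\nu^+\bigl(U \# T_{3,4}\bigr) = \nu^+(T_{3,4}) = 3 > 0$, so the two knots lie in distinct $\nu^+$-equivalence classes and hence in distinct concordance classes; iterating gives a strictly increasing chain in the partial order. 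Your geometric setup (the meridional disc meeting the closed braid in $m$ coherently oriented points, and the role of the hypothesis $m \geq 3$) is correct and is exactly what feeds into Sato's theorem; the step that must be replaced is the passage from his theorem to a comparison of absolute $\nu^+$ values.
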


We now focus on the boundary twisting of the monodromies. The amount of boundary twisting is quantified by the {\em fractional Dehn twist coefficient}, defined in Section~\ref{sec:FDTC}. The idea of the fractional Dehn twist coefficient, or FDTC,
first appeared in \cite{GO89} in the context of essential laminations. In the context of open books and contact topology, it was  developed and applied
 in \cite{HKM07}, \cite{HKM08}, and explored further by many authors, \cite{BE13, KR13, IK17, HM18}.  For classical braids, 
a similar notion (via a somewhat different approach) was studied in \cite{Ma04}. The fractional Dehn twist coefficient was also studied from the braid- and knot-theoretic perspective in \cite{Pl18,FH19, EV15} among others.
A generalization of the fractional Dehn twist coefficient to the case of braids in arbitrary open books, and a detailed proof that different definitions are equivalent, is given in \cite{IK19}. 

\begin{notation}\label{notation} 
We will use notation $FT(K)$ for the fractional Dehn twist coefficient
of the fibered monodromy for a fibered knot $K$ in $S^3$. We use $BT(\beta)$ for the fractional Dehn twist coefficient of a braid monodromy. 
\end{notation}

We will explore how the fractional Dehn twist coefficient of the braid monodromy is related to invariants of the underlying knot, such as the Seifert genus $g_3(K)$ and the slice genus $g_4(K)$. Given a braid representative $\beta$ whose closure is a knot $K = \widehat{\beta}$,  we seek lower bounds for the genus $g_3(K)$ and the slice genus $g_4(K)$ in terms of the fractional Dehn twist coefficient $BT(\beta)$. 

In \cite{It11} Ito used braid foliations to give a bound for the genus of a knot $K$ in terms of the fractional Dehn twist coefficient $BT(\beta)$ of its braid representative $\beta$. See Theorem \ref{thm:ito} for a more precise statement. Ito's theorem serves as motivation for our work. 

\begin{theorem}[\cite{It11}]\label{thm:Ito1}
 Let $\beta$ be a braid whose closure is isotopic to a knot $K \subset S^3$. Then 
$$
|BT(\beta)| \leq g_3(K)+2.
$$
\end{theorem}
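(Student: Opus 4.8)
The plan is to bound $BT(\beta)$ by the genus of a minimal-genus Seifert surface via Birman--Menasco's braid foliation machinery, which is the route taken in \cite{It11}. It suffices to treat the case $BT(\beta)\geq 0$: the mirror braid $\overline{\beta}$ (replace each $\sigma_i^{\pm 1}$ by $\sigma_i^{\mp 1}$) closes to the mirror knot $\overline{K}$, with $g_3(\overline{K})=g_3(K)$, while $BT(\overline{\beta})=-BT(\beta)$ because mirroring is induced by an orientation-reversing involution of the punctured disc and hence negates rotation numbers along $\partial D_m$. It is then convenient to replace the real number $BT(\beta)$ by the integer Dehornoy floor $\lfloor\beta\rfloor$, the unique integer with $\Delta^{2\lfloor\beta\rfloor}\preceq\beta\prec\Delta^{2\lfloor\beta\rfloor+2}$ in the Dehornoy ordering of $B_m$; since $|BT(\beta)-\lfloor\beta\rfloor|<1$ (comparison of Malyutin's twist number with the Dehornoy floor, see \cite{Ma04, IK19}), the theorem follows once one shows $\lfloor\beta\rfloor\leq g_3(K)+1$ in the relevant case $BT(\beta)\ge 0$.

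Realize $K=\widehat{\beta}$ as a closed $m$-braid about an axis $A$, so that $S^3\setminus A$ is fibered by disc pages $\{H_\theta\}_{\theta\in S^1}$, and choose a Seifert surface $\Sigma$ for $K$ with $\chi(\Sigma)=1-2g_3(K)$. By Birman--Menasco, $\Sigma$ may be isotoped into braid position: it meets $A$ transversally in finitely many elliptic points, is transverse to the pages off finitely many hyperbolic tangencies, and the induced singular foliation $\mathcal{F}=\{\Sigma\cap H_\theta\}_{\theta\in S^1}$ may be taken reduced (no closed leaves or other inessential configurations). A Poincar\'e--Hopf count for $\mathcal{F}$, with the usual corrections along $\partial\Sigma=K$, yields $e-h=1-2g_3(K)$, where $e$ and $h$ are the numbers of elliptic and hyperbolic singularities; so bounding $g_3(K)$ from below is the same as bounding $h-e$ from below.

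The core step is the dictionary between $\lfloor\beta\rfloor$ and the combinatorics of $\mathcal{F}$ near $K$. As $\theta$ sweeps once around $S^1$, the arc-and-curve system $\Sigma\cap H_\theta$ returns to itself after its endpoints on $K$ have been transported around $A$ by $\beta$, and each full twist $\Delta^2$ present in $\beta$ forces this system to perform one extra rotation about $A$. After discarding the finitely many Euler-characteristic-preserving modifications of $\mathcal{F}$ ($bb$-tile exchanges and similar moves), each such extra rotation beyond the first one or two must be realized by a band pass, hence by at least one hyperbolic point not cancelled against an elliptic one; thus $h-e$ grows at least linearly in $\lfloor\beta\rfloor$, and a careful count gives $\lfloor\beta\rfloor\leq g_3(K)+1$, as required.

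The heart of the matter --- and the main obstacle --- is precisely this dictionary: turning ``$\beta$ is large in the Dehornoy order'' into ``the braid foliation of every minimal Seifert surface is forced to be long and narrow, with many uncancelled hyperbolic points,'' while keeping track of negative elliptic points and of non-tiled regions of $\mathcal{F}$. This is exactly what the braid foliation technology of Birman--Menasco is designed to do, and where essentially all the work in \cite{It11} lies. As a sanity check, the inequality is consistent with Proposition~\ref{asymp}: for $K_n=\widehat{\beta\Delta^{2n}}$ one has $BT(\beta\Delta^{2n})=BT(\beta)+n$ while $g_3(K_n)$ grows like $\frac{1}{2}nm^2$, so the bound holds with room to spare.
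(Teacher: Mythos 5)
There is a genuine gap, though your framing of the problem is consistent with how the paper treats this statement. The paper does not prove Theorem~\ref{thm:Ito1} at all: it is quoted from Ito, with the sharper form recorded as Theorem~\ref{thm:ito} ($|[\beta]_D|<\frac{4g_3(K)-2}{n+2}+\frac{3}{2}\leq g_3(K)+1$) and the passage from the Dehornoy floor to $BT$ supplied by the relation~(\ref{BTfloor}), namely $[\beta]_D\leq BT(\beta)\leq[\beta]_D+1$. Your first two reductions --- mirroring to assume $BT(\beta)\geq 0$, and trading $BT(\beta)$ for the integer floor at the cost of $+1$ --- are correct and reproduce exactly this bridge (minor quibble: the difference $BT(\beta)-[\beta]_D$ lies in $[0,1]$, not strictly in $(-1,1)$, but this does not affect the conclusion).

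The gap is in the third and fourth paragraphs. The inequality $[\beta]_D\leq g_3(K)+1$ is the entire content of the theorem, and your argument for it is a strategy outline rather than a proof: the claim that ``each extra rotation beyond the first one or two must be realized by a band pass, hence by at least one hyperbolic point not cancelled against an elliptic one,'' followed by ``a careful count gives $\lfloor\beta\rfloor\leq g_3(K)+1$,'' is precisely the assertion to be established, and you acknowledge as much in your closing paragraph. Nothing in your sketch explains why a large Dehornoy floor cannot be absorbed by a foliation with many elliptic points, how negative elliptic points and non-tiled regions are controlled, or why the count produces the specific $n$-dependent constant $\frac{4g-2}{n+2}+\frac{3}{2}$ appearing in Ito's actual bound (your sketch has no $n$-dependence at all, which signals that the real estimate is not being captured). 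As written, the proposal establishes the theorem only modulo citing \cite{It11} for its main step --- which is what the paper itself does, but which does not constitute an independent proof.
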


For quasipositive braids, we provide a new upper bound on $BT(\beta)$ in terms of the slice genus in the following theorem.

\begin{theorem}\label{thm:quasipositiveBTbound} Let $n \geq 3$. 
Let $\beta\in B_n$ be a quasipositive $n$-braid whose braid closure $\hat\beta$ is a knot then $$BT(\beta)\leq 2g_4(\hat\beta)+(n-2).$$
\end{theorem}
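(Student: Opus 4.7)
The plan is to reduce the claimed inequality to a statement purely about $BT$ by invoking Rudolph's slice--Bennequin equality, and then bound $BT$ directly via its quasi-morphism properties on $B_n$. Write $\beta = \prod_{i=1}^k w_i \sigma_{j_i} w_i^{-1}$ as a product of $k$ quasipositive factors, so the exponent sum is $e(\beta) = k$. Since $\hat\beta$ is a knot, the sharpness of the slice--Bennequin inequality for quasipositive braids (Rudolph) gives
\[
g_4(\hat\beta) \;=\; \frac{k - n + 1}{2}.
\]
Consequently $2g_4(\hat\beta) + (n-2) = k - 1$, and it suffices to prove the cleaner statement $BT(\beta) \leq k - 1$.

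Next, I would invoke three standard properties of $BT$ on $B_n$, as established in \cite{Ma04, IK19}. First, $BT$ is conjugation-invariant, so $BT(w\sigma_j w^{-1}) = BT(\sigma_j)$ for every $w \in B_n$. Second, for $n \geq 3$ one has $BT(\sigma_j) = 0$: the square $\sigma_j^2$ is the Dehn twist about a simple closed curve encircling only two of the $n \geq 3$ punctures, hence not boundary-parallel, so $BT(\sigma_j^2) = 0$, and by homogeneity $BT(\sigma_j) = 0$. Third, $BT$ is a quasi-morphism on $B_n$ with defect at most $1$, i.e.\ $|BT(\alpha\gamma) - BT(\alpha) - BT(\gamma)| \leq 1$ for all $\alpha,\gamma \in B_n$. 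Iterating this inequality across the $k$-fold product expression for $\beta$ and using the conjugation invariance together with the vanishing on generators gives
\[
BT(\beta) \;\leq\; \sum_{i=1}^k BT\bigl(w_i \sigma_{j_i} w_i^{-1}\bigr) + (k-1) \;=\; 0 + (k-1) \;=\; k-1,
\]
which combined with the reduction above completes the argument.

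The main technical point I expect to have to pin down is the defect bound for the $BT$ quasi-morphism in the precise normalization being used in this paper, since several conventions appear in the literature; one must also check carefully that the defect is achieved additively when one iterates to a $k$-fold product. The hypothesis $n \geq 3$ is genuinely necessary: in $B_2$ one computes $BT(\sigma_1) = 1/2$, and the statement already fails for $\beta = \sigma_1 \in B_2$, whose closure is the unknot. Finally, although a quasipositive factorization of $\beta$ is not unique, the integer $k$ is forced to equal the braid-invariant exponent sum $e(\beta)$, so the bound $k-1 = 2g_4(\hat\beta)+(n-2)$ is a well-defined function of $\beta$ and $\hat\beta$ alone.
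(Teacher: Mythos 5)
Your proposal is correct and follows essentially the same route as the paper: the paper's Lemma 4.4 likewise shows each quasipositive factor (viewed as a half-twist $H_{\gamma_i}$) has vanishing FDTC for $n\geq 3$ and then iterates the quasimorphism defect to get $BT(\beta)\leq m-1$, and its Lemma 4.6 is exactly Rudolph's $\chi_4(\hat\beta)=n-m$, which converts $m-1$ into $2g_4(\hat\beta)+n-2$. The only cosmetic difference is that you derive the vanishing on a single factor from conjugation invariance and homogeneity applied to $\sigma_j^2$, whereas the paper exhibits an essential arc fixed by the half-twist and applies its Lemma 4.3.
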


The upper bound on $BT(\beta)$ is sharp by Corollary~\ref{slice-BT}, which constructs slice knots ($g_4(K) = 0$) represented by quasipositive $n$-braids $\beta$ with $BT(\beta) = n-2$.

The analogue of Ito's Theorem~\ref{thm:Ito1} with genus $g_3(K)$ replaced by slice genus $g_4(K)$ does not hold by Corollary~\ref{slice-BT} and Proposition~\ref{examplesthatkillthedream}. Notably, the non-quasipositive knots of Proposition~\ref{examplesthatkillthedream} satisfy the same inequality as Theorem~\ref{thm:quasipositiveBTbound}, leading us to ask whether such a bound holds in general.

\begin{restatable}{question}{quesgenusbounds}\label{conj:genusbounds} For any $n$-strand $\beta$ representing a knot $K$, is it always true that
$$
|BT(\beta)| \leq 2g_4(K)+n-2?
$$
If not, is there a bound of the order 
$$
|BT(\beta)| \leq C(n) g_4(K)+D(n),
$$
where $C(n), D(n)$ are constant for each fixed $n$?
\end{restatable}

As evidence towards a positive answer to Question~\ref{conj:genusbounds}, for $3$-braids representing knots of finite order in the concordance group, we provide an upper bound on $BT(\beta)$ in the following theorem.

\begin{restatable}{theorem}{thmslicethreebraids}\label{slice3-braids} Let  $K$ be a knot that can be represented as the closure of a braid in $B_{3}$. Suppose further that $K$ is slice, or more generally, that $K$ 
has a finite order in the concordance group. Then any $3$-braid representative $\beta$ of $K$ satisfies $|BT(\beta)| \leq 1$.
\end{restatable}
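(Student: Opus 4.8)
The plan is to leverage the classification of $3$-braids (due to Murasugi and refined by Birman--Menasco) together with known computations of the fractional Dehn twist coefficient $BT$ for $3$-braids, and then combine this with the fact that concordance invariants vanish (or are bounded) on finite-order knots. Every $3$-braid is conjugate to $\Delta^{2d} \cdot w$ for some integer $d$, where $w$ is a word in a specific normal form (for instance $w = \sigma_1^{a_1}\sigma_2^{-b_1}\cdots$ with the $a_i, b_i > 0$, or one of a few exceptional families); the FDTC is essentially additive under $\Delta^2$ and equals $d$ plus a bounded correction depending on which Murasugi class $w$ lies in. Concretely, one knows that $BT(\beta)$ is controlled: $|BT(\beta) - d| \le 1$, so to prove $|BT(\beta)| \le 1$ it suffices to show that if $|d| \ge 2$ then $\widehat{\beta}$ has infinite order in the concordance group.

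First I would recall the precise statement of the $3$-braid normal form and the formula for $BT$ in each case (citing Ito--Kawamuro or the FDTC computations in \cite{IK17, IK19}), reducing the theorem to the claim: if $\beta$ is conjugate to $\Delta^{2d} w$ with $|d|$ large, then $\widehat\beta$ is not of finite concordance order. Second, I would bound a concordance homomorphism from below in terms of $d$. The natural candidates are the Ozsv\'ath--Szab\'o $\tau$ invariant or the signature: for $3$-braids these admit clean formulas (e.g.\ Erle's signature formula, or Baldwin's computation of Heegaard Floer homology of $3$-braid closures), and adding a full twist $\Delta^2$ changes $\tau$ (resp.\ the signature) by a definite amount. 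Since $\tau$ and $\sigma$ are concordance homomorphisms, a knot of finite concordance order must have $\tau = 0$ and $\sigma = 0$; hence only finitely many values of $d$ — in fact only $d \in \{-1, 0, 1\}$ after absorbing the bounded $w$-contribution — are compatible with finite order, and a short case-check on the exceptional Murasugi families finishes the argument.

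The main obstacle I anticipate is controlling the bounded-but-nonzero contributions from the word $w$: the quantities $BT(\beta)$, $\tau(\widehat\beta)$, and $\sigma(\widehat\beta)$ each pick up a correction term from $w$ that is bounded independently of $d$ but whose exact value varies across Murasugi's finitely many classes, and one must check that in every class the inequality $|BT| \le 1$ survives once $d$ is pinned down. This is a finite but slightly delicate bookkeeping step, and the exceptional families (those $3$-braids whose closures are torus knots or connected sums thereof, or links rather than knots) need separate attention — in particular one must restrict to the cases where $\widehat\beta$ is genuinely a knot and genuinely finite-order, ruling out the torus-knot cases which have infinite order and large FDTC. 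A secondary technical point is making sure the chosen concordance invariant actually detects the full twist sharply enough on $3$-braid closures; if $\tau$ alone is not sensitive enough in some Murasugi class, the signature (or the $d$-invariants of the double branched cover, which is a Brieskorn-type Seifert space for $3$-braid closures) should cover the gap.
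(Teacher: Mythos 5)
Your overall architecture --- Murasugi normal form, exact FDTC values in each class, and a concordance obstruction that pins down the power $d$ of $\Delta^2$ --- is the same as the paper's. The paper computes $BT(\Delta^{2d}w)=d$ for Murasugi's classes $(1)$ and $(2)$ and $BT=d-1/3,\ d-1/2,\ d-2/3$ for class $(3)$, and then invokes Baldwin's Proposition 1.6 together with his classification of quasi-alternating $3$-braid closures: a $3$-braid knot of finite concordance order is the closure of a class-$(1)$ word with $d\in\{-1,0,1\}$, whence $|BT|\leq 1$.

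The gap is in how you propose to pin down $d$. You assert that $\tau(\widehat{\beta})$ and $\sigma(\widehat{\beta})$ are a linear function of $d$ plus ``a correction term from $w$ that is bounded independently of $d$ \dots across Murasugi's finitely many classes,'' and conclude that only finitely many $d$ are compatible with finite concordance order. But Murasugi's class $(1)$ is an \emph{infinite} family $\sigma_1\sigma_2^{-a_1}\cdots\sigma_1\sigma_2^{-a_n}$, and the contribution of $w$ to $\tau$ and $\sigma$ is unbounded over it: these words close to alternating links whose signatures and $\tau$-invariants grow with $n$ and $\sum a_i$. So a large $|d|$ could a priori be cancelled by a suitable $w$, and boundedness of the correction does not rule this out; what is needed is the joint formula for $\tau$ or $\sigma$ in terms of $d$ \emph{and} the $a_i$ simultaneously (Erle's signature formula, or Baldwin's computations of $\tau$ and $s$ for $3$-braid closures), followed by an actual argument that $|d|\geq 2$ forces a nonzero value. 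That step is precisely the content of Baldwin's Proposition 1.6, which the paper uses as a black box. A secondary point: your stated reduction ``$|BT(\beta)-d|\leq 1$ and $|d|\leq 1$'' only yields $|BT|\leq 2$; as you partly acknowledge, one needs the exact per-class values ($BT=d$ in classes $(1)$--$(2)$, and the explicit fractions in class $(3)$) together with the class-specific constraints on $d$ to reach $|BT|\leq 1$.
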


The proof of Theorem~\ref{slice3-braids} relies on Murasugi's classification of three-braids \cite{Mu74} and work of Baldwin \cite{Bal08} and is given in Section~\ref{sec:BTgenusbounds}. Section~\ref{sec:BTgenusbounds} concludes with a discussion on computational evidence for the bounds in Question~\ref{conj:genusbounds}.

One might hope to use the Ozsv\'ath-Szabo $\tau$-invariant \cite{OS03} or the Rasmussen $s$-invariant \cite{Ra10} to construct an upper bound on the fractional Dehn twist coefficient $BT$ of a braid. We show that this idea fails in Proposition~\ref{prop:BTtau}, by 
constructing a family of braids with small $\tau$-invariant and small $s$-invariant but large $BT$.

%\begin{lemma}
%Let $\beta=(\Delta^2)^k\sigma_1^{-1}\sigma_2^{-(6k-1)}$ then $$FDTC(\beta)=k$$
%\end{lemma}
%
%\begin{proof}
%Note that $\beta^s=[(\Delta^2)^k\sigma_1^{-1}\sigma_2^{-(6k-1)}]^s=\Delta^{2sk}[\sigma_1^{-1}\sigma_2^{-(6k-1)}]^s$. Thus $$\Delta^{2m+2}\beta^s=\Delta^{2m+2}[(\Delta^2)^k\sigma_1^{-1}\sigma_2^{-(6k-1)}]^s.$$
%
%
%
%\end{proof}

\begin{restatable}{proposition}{propBTtau}\label{prop:BTtau}
Let $\beta=(\Delta^2)^k\sigma_1^{-1}\sigma_2^{-(6k-1)}$ in $B_{3}$. Then $\hat\beta$ is a knot, %$FDTC(\beta) \geq k-1$, 
$BT(\beta) \geq k-1$, $|\tau(\hat\beta)|\leq1$ and $|s(\hat\beta)|\leq2$.
\end{restatable}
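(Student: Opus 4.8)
The plan is to analyze the braid $\beta = (\Delta^2)^k \sigma_1^{-1}\sigma_2^{-(6k-1)}$ in $B_3$ by separately bounding the fractional Dehn twist coefficient $BT(\beta)$ from below and the concordance invariants $\tau(\widehat\beta)$, $s(\widehat\beta)$ from above. First I would verify that $\widehat\beta$ is indeed a knot (a single component), which is a routine permutation check: $\Delta^2$ is a pure braid, and $\sigma_1^{-1}\sigma_2^{-(6k-1)}$ has the same underlying permutation as $\sigma_1\sigma_2$ since $6k-1$ is odd, giving a 3-cycle, so the closure is connected.

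For the lower bound on $BT(\beta)$, I would use the quasimorphism property and stability of the fractional Dehn twist coefficient. The key facts are that $BT$ is a homogeneous quasimorphism on $B_3$ (with defect bounded by a universal constant), that $BT(\Delta^{2k}) = k$ since $\Delta^2$ is central and full twists contribute exactly one each, and that the contribution of the negative part $\sigma_1^{-1}\sigma_2^{-(6k-1)}$ is controlled. Concretely, since $BT$ restricted to the negative monoid is nonpositive and bounded below in terms of the word, and since $BT$ of a central power is exact, the quasimorphism inequality $BT(\beta) \geq BT(\Delta^{2k}) + BT(\sigma_1^{-1}\sigma_2^{-(6k-1)}) - D$ for the defect $D$ should yield $BT(\beta) \geq k - 1$ once one checks $BT(\sigma_1^{-1}\sigma_2^{-(6k-1)})$ is bounded below by a fixed constant (indeed by $0$ up to defect, since this braid is conjugate into a ``small'' piece, or directly using Malyutin's computations on $B_3$). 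Alternatively, one can invoke Ito's inequality in reverse together with an explicit Seifert surface, but the quasimorphism route is cleaner.

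For the upper bounds on $\tau$ and $s$, the idea is that $\widehat\beta$ should be a knot of small genus-type complexity despite the large twisting, because the $k$ full twists and the long negative band $\sigma_2^{-(6k-1)}$ largely cancel at the level of four-dimensional invariants. I would exhibit $\widehat\beta$ either as a small twist knot or as obtained from a simple knot by a sequence of crossing changes that bound the change in $\tau$ and $s$ by $1$ and $2$ respectively; concretely, each $\Delta^2$ is a product of positive bands, and absorbing them against $\sigma_2^{-(6k-1)}$ via braid relations and Markov moves should reduce $\widehat\beta$ to a knot with a genus-one (or close) Seifert surface, forcing $|\tau| \le 1$ and $|s| \le 2$. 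An alternative is to directly compute that $\widehat\beta$ is a specific well-understood family (possibly a twisted Whitehead double or a $(2,n)$-cable-type knot) whose $\tau$ and $s$ are tabulated.

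The main obstacle I anticipate is the precise lower bound $BT(\beta) \ge k-1$: getting the constant right requires carefully controlling the defect of the quasimorphism and the exact value (or a sharp lower bound) of $BT$ on the mixed-sign piece $\sigma_1^{-1}\sigma_2^{-(6k-1)}$, rather than just an asymptotic statement. This likely uses Malyutin's explicit description of the fractional Dehn twist coefficient (the ``twist number'') on $B_3$, or Ito's braid-foliation estimates, to pin down that the negative tail cannot drag $BT$ down by more than a bounded amount below $k$. The upper bounds on $\tau$ and $s$ should be comparatively routine once the right simplified diagram for $\widehat\beta$ is identified.
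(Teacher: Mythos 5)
Your overall architecture (verify connectedness, bound the negative tail's contribution to $BT$, bound $\tau$ and $s$ separately) matches the paper's, but both quantitative steps have real gaps.

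For the lower bound on $BT$: since $\Delta^2$ is the boundary Dehn twist, property (c) of Proposition \ref{prop:FDTCprops} gives the \emph{exact} identity $BT(\beta)=k+BT(\sigma_1^{-1}\sigma_2^{-(6k-1)})$ --- there is no defect to pay here, and invoking the quasimorphism inequality only loses ground. The entire content of the claim is therefore that $BT(\sigma_1^{-1}\sigma_2^{-(6k-1)})\geq -1$. Your assertion that this piece is ``bounded below by $0$ up to defect'' does not hold up: it is a negative braid word, so by Proposition \ref{arcs-FT} its $BT$ is $\leq 0$ (and is generically strictly negative), and if you only know it is $\geq -1$ and then also subtract a defect of $1$ you land at $k-2$, not $k-1$. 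The paper closes this gap with a one-line rewriting: $\Delta^{2}\sigma_1^{-1}\sigma_2^{-(6k-1)}=\sigma_1\sigma_2\sigma_1^{2}\sigma_2^{-6k+2}$, a word in which $\sigma_1$ appears with positive exponents only, so Proposition \ref{arcs-FT} gives $BT\geq 0$ for this factor and hence $BT(\beta)=(k-1)+BT(\sigma_1\sigma_2\sigma_1^{2}\sigma_2^{-6k+2})\geq k-1$. (Your alternative appeal to ``Malyutin's computations'' could be made to work --- the negative tail is, up to the flip automorphism and conjugation, an inverse of a stabilized braid, so its $BT$ lies in $[-1,0]$ --- but you would need to say this explicitly.)

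For $|\tau|\leq 1$ and $|s|\leq 2$: your plan of reducing $\widehat\beta$ to a twist knot or exhibiting a genus-one Seifert surface is unsupported and almost certainly not how this works --- there is no reason to expect the Seifert genus of $\widehat\beta$ to be small, only the concordance invariants. The paper's argument is purely a crossing count: $\beta$ has $6k$ positive and $6k$ negative crossings, so Lemma \ref{tau-bound} (the writhe-type bounds obtained by changing all crossings to one sign and using the crossing-change inequalities together with the exact values of $\tau$ and $s$ on positive braid closures) immediately gives $-1\leq\tau(\widehat\beta)\leq 1$ and $-2\leq s(\widehat\beta)\leq 2$. Your phrase ``a sequence of crossing changes that bound the change in $\tau$ and $s$ by $1$ and $2$'' is off by the number of crossings changed; the correct bookkeeping is exactly Lemma \ref{tau-bound}, which you should cite and apply rather than search for a simplified diagram.
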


It is important to note that $BT(\beta)$ is very sensitive to braid stabilizations. Recall that the positive braid stabilization of $\beta \in B_n$ is the braid $\beta_+=\beta \sigma_n \in B_{n+1}$, and the 
negative stabilization is the braid  $\beta_-=\beta \sigma_n^{-1} \in B_{n+1}$. The braid closures of $\beta$, $\beta_+$ and $\beta_-$  are all isotopic to the same link. 
Proposition~\ref{prop:stab} shows that stabilized braid representatives have fractional Dehn twist coefficient $BT(\beta_+)$ and $BT(\beta_-)$ that are bounded between $-1$ and $1$. Thus, a stabilized braid representative for a knot $K$ will not provide a useful  lower bound for the genus $g_3(K)$ or slice genus $g_4(K)$. To our knowledge it is an open question how to determine which braid representative has largest possible fractional Dehn twist coefficient.

Finally, we explore the relationship between the fractional Dehn twist coefficient $FT(K)$ of a fibered knot $K$ and other topological invariants of $K$, including its knot Floer stable equivalence class in Section~\ref{sec:knotfloerstable} and slice genus in Section~\ref{sec:fdtcsliceknots}.

\subsection*{Acknowledgments} This article resulted from work completed at the Women in Symplectic and Contact Geometry and Topology Workshop (WiSCon) hosted by ICERM. 
J. Elisenda Grigsby, Diana Hubbard, Keiko Kawamuro, and Olga Plamenevskaya were mentors 
for Project 6, titled ``Homological invariants, braids, transverse links, and surfaces". 
The participants were Elaina Aceves, Celeste Damiani, Feride Ceren Kose, Gage Martin, Juanita Pinz\'on Caicedo, Katherine Raoux, Linh Truong, and Hannah Turner. GM and LT thank Siddhi Krishna for helpful discussions. The results in this article reflect the observations of two sub-groups of Project 6. 
The mentors and participants of Project 6 would like to thank the organizers of WiSCon and are grateful
to ICERM and to the AWM for supporting WiSCon via the AWM ADVANCE grant NSF-HRD 1500481. Finally, the authors thank the anonymous referee for their feedback; in particular they suggested an improvement to Proposition \ref{asymp}.

%\todo[inline]{Added a note about the referee's improvement. Best way to say?Should we thank Tom Mark?}

\section{Background on fractional Dehn twist coefficient and braids}\label{sec:FDTCbraidorderings}
We begin with background on the braid group and the fractional Dehn twist coefficient and collect some previous results that inspired our work. 
\subsection{The braid group} \label{sec:braidgroup}

Recall that {Artin's braid group $B_n$ on $n$-strands}  \cite{Ar25} is
the mapping class group of an $n$-punctured disc $D_n$.  It admits the presentation
 $$B_{n} = \left\langle \sigma_{1}, \ldots, \sigma_{n-1} \Bigg\vert \begin{array}{lr} 
\sigma_{i}\sigma_{j} = \sigma_{j}\sigma_{i} : |i-j| \geq 2 \\
\sigma_{i}\sigma_{i+1}\sigma_{i} = \sigma_{i+1}\sigma_{i}\sigma_{i+1} : 1 \leq i \leq n-2
\end{array}
\right\rangle.$$
The $\sigma_i$'s are usually referred to as the {\em standard} or Artin generators of $B_n$.  
A \textit{n-braid} $\beta$ is an element of $B_n$.  The braid  $\beta$ can be considered as a proper embedding of $n$ strands in $D^2 \times I$, so that each generator $\sigma_i$ corresponds to 
a positive half-twist between the $i$-th and $(i+1)$-th strands. When  $D^2 \times \{0\}$ is glued to $D^2 \times \{1\}$ by the identity map, 
we obtain a link in $D^2 \times S^1 \subset S^3$ called the closure of $\beta$, denoted $\hat\beta$.   A classical theorem of Alexander \cite{Al23}  allows  to reverse this process (although not uniquely): every knot or link in $S^3$ 
can be represented as a closed braid.
The \textit{braid index} of a link is the minimum number $n$ such that there exists an $n$-braid $\beta$ whose closure represents the given link. 

Thus, a classical braid can be considered from several viewpoints: 1)
a braid is a ``word'' in standard generators $\sigma_i$ of the braid group
$B_n$;  2) the braid closure is a
knot or link that can be studied by means of classical knot theory and
by modern knot-homological invariants (such as knot Floer homology and Khovanov homology); 3) the braid
monodromy action  shows how the braid  twists, and is
related to ideas from geometry and dynamics of surface diffeomorphisms. There are many excellent resources for learning more about braids; for instance, see \cite{Bir05}, \cite{Bir16}, and \cite{EV15}. 

\subsection{Fractional Dehn twist coefficient}\label{sec:FDTC}

The amount of boundary twisting of a surface diffeomorphism is quantified by the {\em fractional Dehn twist coefficient} (FDTC), defined geometrically 
as follows. Let $S$ be  a  compact oriented surface  with connected boundary and $\chi(S)<0$, and  $\phi: S \to S$ a  homeomorphism fixing the boundary $\partial S$ pointwise.
Using the Nielsen-Thurston classification, we can find a free isotopy   $\Phi: S \times [0,1] \to S$ connecting $\phi$ to its Nielsen-Thurston representative, so that $\Phi_0=\phi$ and 
$\Phi_1$ is either periodic, reducible, or pseudo-Anosov. (Note that $\Phi_1$ no longer fixes the boundary of $S$.) 
%Taking the restriction of $\Phi$ to the boundary, we consider the homeomorphism $\Phi_\d: \d S\times [0,1] \to \d S\times [0,1]$. 
The fractional Dehn twist coefficient $FT(\phi)$ is defined as the winding number of the arc  $\Phi (p \times [0,1])$ for a chosen basepoint $p \in \partial S$. It can be shown that $FT(\phi)$ 
depends only on the isotopy class of $\phi$ {\em rel} boundary, and is independent of the choice of basepoint $p$ and isotopy $\Phi$.
Moreover, $FT(\phi)$ is always a rational number. Although we primarily focus on knots in $S^3$, one could also consider the FDTC for fibered knots in an 
arbitrary closed oriented 3-manifold $Y$ (and for corresponding open books on $Y$). 

The FDTC of a braid is defined by considering the braid monodromy as a boundary-fixing homeomorphism of the punctured disk, and taking the FDTC at the 
boundary of the disk. For surfaces with multiple boundary components, the FDTC can be defined for each component 
(the corresponding number measures twisting around the given component).

For a fibered knot $K$ in $S^{3}$, there are different monodromies to consider: the monodromy of the fibration $S^3 \setminus K \to S^1$ as well as the monodromy of a braid representative 
$\beta$ of $K$. These monodromies live on different surfaces and have very different fractional Dehn twist coefficients. In particular, the FDTC of the fibration is fixed, while the FDTC of the braid monodromy depends on the choice of the braid representative. 
As in Notation~\ref{notation} we will use notation $FT(K)$ for the FDTC
of the fibration monodromy and $BT(\beta)$ for the FDTC of the braid monodromy.
An example of how these two values can be different comes from the torus knots $T_{p,q}$. Since these are fibered in $S^{3}$, by work of Gabai and Kazez-Roberts, $|FT(T_{p,q})| \leq \frac{1}{2}$ (see Theorem \ref{s3-fibered-thm} below). However, a braid representative of $T_{p,q}$ is the word $\beta_{p,q} = (\sigma_{1} \cdots \sigma_{p-1})^{q}$ in $B_{p}$, and $|BT(\beta_{p,q})| = \frac{q}{p}$, which may be a large quantity. In general, the quantity $BT(\beta)$ can also be defined more algebraically, using braid orderings and Dehornoy's floor function, see Section \ref{sec:braidorder}.

We collect a few useful properties of the FDTC in the next proposition. %\todo[inline]{Moved the definition of a Dehn twist about the boundary to inside the prop}

\begin{proposition}[\cite{Ma04, IK17}]\label{prop:FDTCprops}
Let $t_\d$ denote the positive Dehn twist about a boundary-parallel curve. Then for any two boundary-fixing homeomorphisms $\phi, \psi$ of $S$, we have
\begin{enumerate}
\item[a)] (Quasimorphism) $|FT(\phi \psi) - FT(\phi) - FT(\psi)| \leq 1$.
\item[b)] (Homogeneity) $FT(\phi^n) = n FT(\phi)$.
\item[c)] (Behaviour under full twists) $FT(t_\d \phi) = FT(\phi)+1$.
\item[d)] (Conjugacy invariant) $FT(\phi) = FT(\psi \phi \psi^{-1})$. 
\end{enumerate}
All of the above properties also hold for braids and their fractional Dehn twist coeficients $BT$. Additionally, if $\beta\in B_n$ is an $n$-braid, we have $BT(\beta) \in \{ \frac{p}{q}$, 
where  $p,q \in \ZZ, 1 \leq q \leq n$ \}.
\end{proposition}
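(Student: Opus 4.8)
The plan is to deduce all five assertions from a single structural fact. Once the geometric winding-number definition above is matched with the algebraic one --- the equivalence of the various definitions being the content of \cite{Ma04} for classical braids and of \cite{IK17, IK19} in general --- the coefficient $BT$ (resp.\ $FT$) is realized as a \emph{translation number}. For braids one fixes the Dehornoy floor $\lfloor\beta\rfloor$, the largest $m\in\ZZ$ with $\Delta^{2m}\preceq\beta$ in the Dehornoy ordering, and sets $BT(\beta)=\lim_{k\to\infty}\lfloor\beta^k\rfloor/k$ (Malyutin's twist number); for a general boundary-fixing $\phi\colon S\to S$ one takes the translation number of the action induced by $\phi$ on a circle naturally attached to $\d S$ (the circle of directions), normalized so that the boundary twist $t_\d$ acts by one full rotation. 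The analytic input is that the floor is coarsely additive, $\lfloor\alpha\rfloor+\lfloor\beta\rfloor\le\lfloor\alpha\beta\rfloor\le\lfloor\alpha\rfloor+\lfloor\beta\rfloor+1$, so a Fekete-type argument guarantees the defining limit exists.

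Granting this, (b) is immediate. For (a), coarse additivity applied to $(\alpha\beta)^k$ versus $\alpha^k\beta^k$ (moving factors past the central $t_\d$ shifts floors only by integers) and division by $k$ give, in the limit, the quasimorphism inequality; the sharp constant $1$ is the refined estimate of \cite{Ma04}. Statement (d) is then the general fact that a homogeneous quasimorphism $q$ is a class function: from $q(\psi\phi^k\psi^{-1})=q((\psi\phi\psi^{-1})^k)=k\,q(\psi\phi\psi^{-1})$ and the uniformly bounded defect one obtains $|k\,q(\psi\phi\psi^{-1})-k\,q(\phi)|\le 2$ for all $k$, hence equality. Statement (c) is cleanest to see directly: $t_\d$ is central (it is $\Delta^2\in Z(B_n)$ for braids, and a boundary-parallel twist commutes with $\phi$ rel $\d S$ in general), and $\lfloor\Delta^2\beta\rfloor=\lfloor\beta\rfloor+1$, so the limit for $t_\d\phi$ exceeds that for $\phi$ by exactly $1$; equivalently, in the winding-number picture, inserting a full twist along $\d S$ before $\phi$ adds exactly $1$ to the winding of the arc $\Phi(p\times[0,1])$.

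It remains to bound the denominator, for which I would invoke the Nielsen--Thurston trichotomy for the monodromy $\phi$ on $D_n$. If $\phi$ is periodic --- of finite order in $\MCG(D_n)/\langle t_\d\rangle\cong B_n/Z(B_n)$ --- then by the classical description of finite-order braids ($\phi$ is conjugate to a power of $\delta=\sigma_1\cdots\sigma_{n-1}$, with $\delta^{\,n}=\Delta^2$, or of $\sigma_1\delta$, with $(\sigma_1\delta)^{\,n-1}=\Delta^2$), parts (b)--(d) force $BT(\phi)$ to be a multiple of $1/n$ or of $1/(n-1)$. If $\phi$ is pseudo-Anosov, its invariant foliation meets $\d D_n$ in some number $d$ of prongs and $BT(\phi)=c/d$ for an integer $c$, while an Euler--Poincar\'e count for the foliation on the $n$-punctured disk bounds $d\le n$. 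If $\phi$ is reducible, one restricts to the component of the cut-open surface adjacent to $\d D_n$ (invariant, since $\d D_n$ is fixed), collapses its inner reduction circles to punctures, and notes that $BT$ at $\d D_n$ equals $BT$ of this collapsed restriction --- a periodic or pseudo-Anosov map on a disk with at most $n$ marked points, since each reduction circle encloses at least two of the original punctures --- so one of the previous two cases applies with the same bound. The step I expect to be the main obstacle is precisely this last one: making the prong count $d\le n$ rigorous via the right Euler-characteristic bookkeeping, and verifying the reducible reduction carefully enough that the denominator never exceeds $n$; parts (a)--(d) are formal once the translation-number description is available, and that description, together with its equivalence to the geometric definition, is what one quotes from \cite{Ma04, IK17, IK19}.
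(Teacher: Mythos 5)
The paper offers no proof of this proposition --- it is imported wholesale from \cite{Ma04, IK17} --- so there is nothing in the text to compare against line by line. Your outline is essentially the standard argument from those sources and is correct in structure, but two steps deserve to be called out as the places where you are still quoting rather than proving. First, coarse additivity of the Dehornoy floor (defect $1$) only shows that its homogenization is a homogeneous quasimorphism of defect at most $2$; the general homogenization lemma does not preserve the defect. To get the sharp constant $1$ in (a) you really do need the realization of $FT$ as a translation number of a lift to $\mathbb{R}$ of the induced circle action at $\partial S$, where Poincar\'e's classical estimate gives defect exactly $1$; that realization is also what makes (a)--(d) meaningful for arbitrary surfaces $S$ rather than only for punctured disks. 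You mention this picture but then run the estimates through the floor function, which is the weaker route. Once the translation-number description is granted, (b), (c), (d) are formal exactly as you say.

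Second, on the denominator bound $q\le n$: your Euler--Poincar\'e sketch is the right idea, but if you carry it out (collapse $\partial D_n$ to a single $d$-pronged singularity on $S^2$, allow $1$-pronged singularities at the $n$ punctures, note interior singularities contribute nonpositively) you get $d\le n-2$ for pseudo-Anosov maps, so the pseudo-Anosov case is not where the bound is tight. The extremal denominators come from the periodic case, via $\delta^{\,n}=\Delta^2$ and $(\sigma_1\delta)^{\,n-1}=\Delta^2$, which is consistent with the sharpness exhibited elsewhere in the paper by periodic-type examples such as $\beta_{p,q}$ with $BT=q/p$. Your reducible reduction (restrict to the component adjacent to $\partial D_n$, collapse reduction curves, observe each encloses at least two punctures so the puncture count strictly drops) is fine. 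Neither point is a fatal gap --- you flag both --- but as written, the defect-$1$ estimate and the prong count are the two places where your proposal, like the paper, ultimately defers to \cite{Ma04, IK17}.
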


A related notion is that of a {\em right-veering} surface homeomorphism. As above, let $S$ be a compact oriented surface  with connected 
boundary, and $\phi: S \to S$ a homeomorphism fixing the boundary $\partial S$ pointwise. 
Let $\alpha$, $\beta:[0,1]\rightarrow S$ be arbitrary properly embedded oriented arcs 
with $\alpha (0)=\beta (0)=x\in\partial S$. Isotope $\alpha$ and $\beta$ so that they intersect transversely
with the fewest possible number of intersections. We say that $\beta$ is to the right of
$\alpha$ if  either $(\dot{\beta}(0),\dot{\alpha}(0))$ define the orientation of $S$ at $x$ or $\beta=\alpha$. 
We now can define right-veering: we say $\phi$ is right-veering if for any choice of 
base point 
$x\in \partial S$
and properly embedded oriented arc $\alpha$ starting at $x$,
$\phi(\alpha)$ is to the right of $\alpha$. In a similar way, one defines left-veering maps. The maps that {\em do not} satisfy the corresponding properties 
are called non-right-veering resp. non-left-veering. Note that the identity map is both right-veering and left-veering (and this is the only isotopy class with both properties.)

%\todo[noline]{added irreducibility assumption}

Very roughly, one can think of the fractional Dehn twist
coefficient as being a measurement of ``how right-veering" a surface homeomorphism is.  Indeed, when $\phi$ is irreducible, the FDTC of $\phi$ is greater than zero if and only if $\phi$ is right-veering: 
see  \cite{HKM07, KR13}. This gives a way to estimate the FDTCs:

\begin{proposition} \label{arcs-FT} \cite[Corollary 2.6]{KR13} (1) If $\alpha$ is a properly embedded oriented arc starting on $\partial S$, then $\phi(\alpha)$ to the right of $\alpha$ implies $FT(\phi)\geq 0$, and 
$\phi(\alpha)$ to the left of $\alpha$ implies $FT(\phi)\leq 0$. 

\noindent (2) If $\beta$ is a braid with a braid word where a generator $\sigma_i$ enters with positive exponents only, then $BT(\beta)\geq 0$. If there is a braid 
word for $\beta$ where a generator $\sigma_j$ enters with negative exponents only, then $BT(\beta) \leq 0$.   
\end{proposition}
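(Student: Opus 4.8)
The plan is to establish part (1) first, as a consequence of the homogeneity of the fractional Dehn twist coefficient together with the dictionary between the FDTC and right-veering behavior (\cite{HKM07, KR13}), and then to deduce part (2) by producing, for each generator $\sigma_i$, an explicit essential arc in $D_n$ adapted to it and invoking part (1).

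For part (1), let $\alpha$ be an essential properly embedded arc with endpoint $x\in\partial S$, and suppose $\phi(\alpha)$ is to the right of $\alpha$. Since $\phi$ is orientation-preserving and fixes $\partial S$, hence $x$, pointwise, it carries a pair of arcs through $x$ in minimal position to another such pair and preserves the cyclic order of tangent directions at $x$; thus $\phi$ preserves the relation ``to the right of (at $x$)''. Using this together with transitivity of ``to the right of'' at a point (\cite[\S2]{HKM07}), an induction on $n$ shows $\phi^n(\alpha)$ is to the right of $\alpha$ for all $n\ge 1$: applying $\phi$ to the statement for $n$ gives ``$\phi^{n+1}(\alpha)$ is to the right of $\phi(\alpha)$'', and transitivity with ``$\phi(\alpha)$ is to the right of $\alpha$'' yields the statement for $n+1$. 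Now suppose toward a contradiction that $FT(\phi)<0$. By homogeneity (Proposition~\ref{prop:FDTCprops}(b)), $FT(\phi^n)=n\,FT(\phi)\to-\infty$, so for $n$ large $FT(\phi^n)$ is very negative; but a homeomorphism with sufficiently negative FDTC is left-veering and in fact moves every essential arc \emph{strictly} to the left (the geometric core of the FDTC--right-veering dictionary of \cite{HKM07, KR13}). Applied to $\phi^n$ and $\alpha$, this contradicts the previous sentence, so $FT(\phi)\ge 0$. The claim for $\phi(\alpha)$ to the left of $\alpha$ follows by running the same argument for $\phi^{-1}$ and using $FT(\phi^{-1})=-FT(\phi)$.

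For part (2), given a generator $\sigma_i$ let $c_i\subset D_n$ be the essential properly embedded arc separating the punctures $\{1,\dots,i\}$ from $\{i+1,\dots,n\}$, drawn to avoid small neighborhoods of all punctures. For $j\ne i$ the generator $\sigma_j^{\pm1}$ is supported in a disk that can be taken disjoint from $c_i$, so $\sigma_j^{\pm1}(c_i)=c_i$ up to isotopy; on the other hand the positive half-twist $\sigma_i$ visibly carries $c_i$ to an arc strictly to the right of $c_i$. If $\beta$ is represented by a braid word in which $\sigma_i$ occurs only with positive exponents, then applying the letters of this word to $c_i$ one at a time and repeating the induction of part (1) — each $\sigma_j^{\pm1}$ with $j\ne i$ fixes $c_i$ and is orientation-preserving, hence preserves ``to the right of $c_i$'', and $\sigma_i$ does too by transitivity since $\sigma_i(c_i)$ is to the right of $c_i$ — shows $\beta(c_i)$ is to the right of $c_i$. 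Part (1) then gives $BT(\beta)\ge0$. If instead some generator $\sigma_j$ occurs only with negative exponents, one argues symmetrically (for instance by applying the above to the mirror braid $\bar\beta$, where $BT(\bar\beta)=-BT(\beta)$), obtaining $BT(\beta)\le 0$.

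The one step that is not formal is the geometric input invoked in part (1): that a surface homeomorphism whose fractional Dehn twist coefficient is negative enough pushes every essential arc strictly to the left. For irreducible $\phi$ this is exactly the equivalence ``$FT(\phi)<0 \iff \phi$ left-veering'' of \cite{HKM07, KR13}; the general, possibly reducible, case rests on the fact that the FDTC along $\partial S$ is governed by the outermost piece of the Nielsen--Thurston decomposition meeting $\partial S$, which I would quote rather than reprove. Everything else — invariance of ``to the right of'' under orientation-preserving boundary-fixing homeomorphisms, its transitivity at a point, and homogeneity of the FDTC — is routine.
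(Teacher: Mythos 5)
Your argument is correct, and for part (2) it is essentially the paper's: the paper proves (2) in the sentence following the proposition by ``finding corresponding arcs,'' namely by conjugating so that the positive-only generator is $\sigma_1$ and using the arc from $\partial D_n$ to the first puncture. You instead use the separating arc $c_i$ between the punctures $\{1,\dots,i\}$ and $\{i+1,\dots,n\}$, which is fixed by every $\sigma_j^{\pm1}$ with $j\neq i$ and pushed strictly to the right by $\sigma_i$; this has the small advantage of avoiding the (slightly delicate) conjugation step, and your letter-by-letter induction using transitivity of ``to the right of'' is the right way to make the sketch rigorous. For part (1) the paper gives no proof at all --- it is quoted from \cite[Corollary 2.6]{KR13} --- whereas you reconstruct it from homogeneity plus the input ``sufficiently negative FDTC forces every arc strictly to the left.'' Be aware that this input is essentially the contrapositive of the statement you are proving (it is the same \cite{HKM07, KR13} dictionary), so your argument is not independent of the cited source; you flag this honestly, and the iteration via $FT(\phi^n)=nFT(\phi)$ does correctly reduce what you need to the robust quantitative form (e.g.\ $FT<-1$ implies strictly left-veering), which also follows from Lemma~\ref{lemma:fracDehn}. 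In fact the shortest self-contained route to (1) within this paper is to apply Lemma~\ref{lemma:fracDehn} directly with $m=0$ and $M$ large, since $\alpha=T_C^0(\alpha)\preceq\phi(\alpha)\preceq T_C^M(\alpha)$ holds for some $M$ whenever $\phi(\alpha)$ is to the right of $\alpha$.
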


Part 2 of Proposition~\ref{arcs-FT} follows by finding corresponding arcs: after a conjugacy, we can assume that $\sigma_1$ enters with positive 
exponents only; then the obvious arc connecting the boundary of the disk to the first puncture is moved strictly to the right of itself. See also  
\cite[Corollary 5.5, Proposition 13.1]{Ma04} for related statements and alternate proofs for braids.

Proposition \ref{arcs-FT} gives an easy way to show that $FT(\phi)=0$ by finding two arcs, one moved to the right by $\phi$ and the other moved to the left. Similarly, 
$BT(\beta)=0$ if we can find a braid word for $\beta$ where one generator enters with positive exponents only, and the other with negative exponents only.

The right-veering property plays an important role in 3-dimensional contact topology:
a contact structure is tight if and only if the monodromy of each compatible open book is right-veering~\cite{HKM07}.
Moreover, a contact structure supported by an open book
with connected boundary and pseudo-Anosov monodromy with $FT\geq 1$ is isotopic to a
perturbation of a taut foliation, and therefore is weakly symplectically fillable and has non-vanishing Heegaard Floer contact invariant (with twisted coefficients) \cite{HKM08}.
For planar open books, the condition $FT> 1$ for every boundary component implies tightness of the supported contact structure \cite{IK15}. 
In a similar vein, transverse braids in the standard contact $S^3$ must be right-veering if their Floer- or Khovanov-homological transverse invariants are non-zero 
\cite{BG, BVV, Pl06, Pl18},
and braids with $BT>1$  have non-vanishing transverse hat-invariant in knot Floer homology \cite{Pl18}.  

A number of existing results connect the FDTC to knot invariants and topology of 3-manifolds. Gabai proved the following result concerning
the genus of fibered knots with pseudo-Anosov monodromy:

\begin{theorem}[\cite{Ga97}]\label{gabai-thm}
Let $K\subset S^3$ be a fibered knot. Suppose that its monodromy is either pseudo-Anosov or reducible with pseudo-Anosov near the boundary.
Then either $FT(K)=0$ or $FT(K) =1/r$, where $2 \leq \vert r \vert \leq 4(g_3(K)) - 2$. %\marginpar{$g(K)$ is replaced by $g_3(K)$.}
\end{theorem}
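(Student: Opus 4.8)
The plan is to read $FT(K)$ off the Nielsen--Thurston representative near the boundary, bound the number of boundary prongs by an Euler--Poincar\'e count, and then use that the ambient manifold is $S^3$ to pin down the numerator. Let $S$ be the fiber surface, so $\partial S$ is a single circle and $\chi(S)=1-2g$ with $g=g_3(K)\geq 1$. Isotope the monodromy $\phi$ freely to its Nielsen--Thurston representative $\Phi_1$; in the reducible case work instead with the pseudo-Anosov piece $P\supseteq\partial S$, noting $\chi(P)\geq\chi(S)$ because the reducing curves are essential. The invariant foliations of $\Phi_1$ have finitely many interior singularities, each with $p_s\geq 3$ prongs, together with $n\geq 1$ prongs along $\partial S$, which $\Phi_1$ cyclically permutes by some number $k$ of steps, $0\leq k<n$. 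Since $\phi$ fixes $\partial S$ pointwise while $\Phi_1$ shifts it by $k/n$, unwinding the free isotopy gives $FT(K)=m+k/n$ for some $m\in\ZZ$; in particular $n\cdot FT(K)\in\ZZ$.

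Next I would bound $n$. The Euler--Poincar\'e formula for the invariant foliations reads $2\chi(S)=\sum_s(2-p_s)-n$, where the sum is over the interior singularities and the boundary circle with $n$ prongs contributes the term $-n$. Each interior term satisfies $2-p_s\leq -1$, so
\[
n\leq -2\chi(S)=4g-2.
\]
(When we passed to $P$ in the reducible case, the same count yields $n\leq -2\chi(P)\leq -2\chi(S)=4g-2$.)

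The remaining step, which I expect to be the real obstacle, is to see that in lowest terms the numerator of $FT(K)$ is $\pm 1$. Two inputs are needed. First, for a fibered knot in $S^3$ one has $|FT(K)|\leq\tfrac12$ (Gabai; Kazez--Roberts, Theorem~\ref{s3-fibered-thm}); combined with $FT(K)=m+k/n$ and $0\leq k<n$ this forces $m\in\{0,-1\}$, hence $FT(K)=\pm j/n$ with $0\leq j\leq n/2$. Second --- and this is where the topology of $S^3$ must enter essentially --- $j$ divides $n$. I would obtain this from the structure of the fibered complement $S^3\setminus K$: the $n$ boundary prongs organize the first-return map of the suspension flow of $\Phi_1$ in a neighborhood of the binding $K$, and because $K$ is a single curve bounding the Seifert surface $S$ in the integral homology sphere $S^3$, a sutured-manifold hierarchy / disk-decomposition analysis of the complement (in the spirit of Gabai) forces this return map to rotate the prong-sectors by a divisor of $n$ rather than by an arbitrary amount $k$. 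This is the step I expect to require real work and the input specific to $S^3$.

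Granting these, the conclusion is immediate: either $FT(K)=0$, or $FT(K)=\pm 1/r$ with $r=n/j\in\ZZ$, in which case $2\leq|r|\leq n\leq 4g-2$ --- the lower bound because $|r|=1$ would give $|FT(K)|=1>\tfrac12$, contradicting the first input. This is exactly the stated dichotomy, with $|r|$ a divisor of the number of boundary prongs of the pseudo-Anosov part.
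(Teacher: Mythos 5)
First, note that the paper does not prove this statement; it is quoted from Gabai \cite{Ga97}, so there is no internal proof to compare against, and your attempt has to be judged on its own. The parts of your argument that are routine are correct: for a pseudo-Anosov representative the FDTC has the form $m+k/n$ where $n$ is the number of boundary prongs of the invariant foliations and $k$ the amount by which they are rotated, and the Euler--Poincar\'e index count (in your normalization, $2\chi(S)=\sum_s(2-p_s)-n$ with each interior term $\leq -1$) does give $n\leq 4g_3(K)-2$; passing to the pseudo-Anosov piece adjacent to $\partial S$ in the reducible case is also fine. So you correctly obtain that $FT(K)$ is a fraction whose denominator (not yet in lowest terms) is at most $4g_3(K)-2$.

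However, the proof has a genuine gap at exactly the point you flag. The assertion that the numerator of $FT(K)$ in lowest terms is $\pm 1$ --- your claim that $j$ divides $n$ --- is the entire content of Gabai's theorem beyond the index count, and you do not prove it; the appeal to ``a sutured-manifold hierarchy / disk-decomposition analysis in the spirit of Gabai'' is a placeholder, not an argument. Without it you only get $FT(K)=\pm j/n$ with $n\leq 4g_3(K)-2$, which is strictly weaker (it would not exclude, say, $FT=2/5$ for a genus-two fibered knot). In addition, your first input is circular as stated: Theorem~\ref{s3-fibered-thm} is obtained in this paper by \emph{combining} Theorem~\ref{gabai-thm} with Kazez--Roberts' analysis of the periodic case, so in the pseudo-Anosov case the bound $|FT(K)|\leq\tfrac12$ is a consequence of the very statement you are proving. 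This particular defect is repairable --- e.g.\ $FT\geq 1$ for a pseudo-Anosov open book would produce a perturbation of a taut foliation on $S^3$ \cite{HKM08}, contradicting Novikov, and mirroring gives $|FT(K)|<1$, which is all you need for $|r|\geq 2$ and for pinning down $m\in\{0,-1\}$ --- but the divisibility claim is not, and as written the proposal does not establish the theorem.
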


Kazez and Roberts in Corollary 4.3 of \cite{KR13} determined the possible $FT$ values for the periodic case and reducible case with the monodromy periodic near the boundary. Putting together Theorem \ref{gabai-thm} with their work yields a very strict bound on the values of $FT$ for fibered knots in $S^{3}$:

\begin{theorem}[{\cite[Theorem 4.5]{KR13}, \cite{Ga97}}]\label{s3-fibered-thm}
Let $K\subset S^3$ be a fibered knot. Then $FT(K) = 0$ or $FT(K) = 1/n$, where $n$ is an integer, $|n| \geq 2$. In particular, $-\frac{1}{2} \leq FT(K) \leq \frac{1}{2}$.
\end{theorem}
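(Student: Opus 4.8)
The plan is to run the Nielsen--Thurston classification on the fibration monodromy $\phi\colon S\to S$ of $K$ and dispatch each case to an input available to us, using the fact that $FT(\phi)$ depends only on the first-return map on the piece of the complement of the canonical reducing system that is adjacent to $\d S$. Up to this localization there are effectively three situations: (i) $\phi$ is pseudo-Anosov, or reducible with pseudo-Anosov first-return map near $\d S$; (ii) $\phi$ is periodic; (iii) $\phi$ is reducible with periodic first-return map near $\d S$.

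In case (i) Gabai's Theorem~\ref{gabai-thm} applies directly, since its hypotheses cover both a globally pseudo-Anosov monodromy and a reducible one that is pseudo-Anosov near the boundary. It yields $FT(K)=0$ or $FT(K)=1/r$ with $2\le |r|\le 4g_3(K)-2$; in particular $|r|\ge 2$ and $|FT(K)|\le\frac12$, so the conclusion holds here.

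For cases (ii) and (iii) I would invoke the analysis of periodic and periodic-near-the-boundary monodromies of Kazez--Roberts \cite{KR13} (Corollary 4.3). The geometric content is as follows: if $\phi$ is periodic, then the mapping torus $S^3\setminus K$ is Seifert fibered, so $K$ is a torus knot $T_{p,q}$ (the disk fiber being excluded since $\chi(S)<0$), and then $\phi$ has order $pq$ with $\phi^{\,pq}=t_\d^{\pm1}$, so by the homogeneity and full-twist properties of Proposition~\ref{prop:FDTCprops} one gets $FT(T_{p,q})=\pm\frac{1}{pq}$, which is of the required form with $|n|=pq\ge 6$. (The baby case is the trefoil, where the $2$-chain relation $(t_at_b)^6=t_\d$ on the once-punctured torus fiber gives $FT=\frac16$.) In case (iii) the piece of the torus decomposition meeting $\d S$ is a cable space, and the same Seifert-invariant bookkeeping --- which is precisely the content of \cite{KR13} --- again forces $FT(K)=0$ or $\pm\frac1n$ with $|n|\ge 2$. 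Combining the three cases, $FT(K)\in\{0\}\cup\{1/n : n\in\ZZ,\ |n|\ge 2\}\subseteq[-\tfrac12,\tfrac12]$.

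The one genuinely delicate point is the periodic and periodic-near-the-boundary case: one must both recognize the knot (or the outermost piece of its complement) as a torus knot or a cable space, and then show that the rotation number at $\d S$ has numerator $\pm 1$ rather than some other residue coprime to the order of the periodic map. This is exactly where the hypothesis $K\subset S^3$ --- as opposed to a fibered knot in an arbitrary closed oriented $3$-manifold, where larger values do occur --- is used in an essential way, and it is the computation carried out by Kazez--Roberts.
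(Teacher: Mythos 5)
Your proposal is correct and follows exactly the route the paper takes: the paper itself presents this theorem as the combination of Gabai's result (Theorem~\ref{gabai-thm}) for the pseudo-Anosov and pseudo-Anosov-near-the-boundary cases with Kazez--Roberts' Corollary~4.3 for the periodic and periodic-near-the-boundary cases, which is precisely your trichotomy. The extra detail you supply --- identifying the periodic case with torus knots and computing $FT(T_{p,q})=\pm\frac{1}{pq}$ via the chain relation --- is a faithful unpacking of what the cited Kazez--Roberts computation does.
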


In a different direction, Hedden and Mark \cite{HM18} found an priori bound on the value of $FT$ for any fibered
knot in a fixed  3-manifold $Y$ in terms of the dimension of the Heegaard Floer homology with $\mathbb{F}=\ZZ/2$ coefficients and the size of the 
torsion in singular 
first homology. As a corollary, they get a bound for the FDTC of classical braids, via open books on the branched double cover. 
\begin{theorem}[\cite{HM18}]\label{HM:HF}  Let $Y$ be  a  closed  oriented  3-manifold.
Then for any fibered knot $K$ in $Y$ with monodromy $\phi$, the FDTC satisfies 

%\todo[noline]{Fixed typo pointed out by Tom Mark}
$$
|FT(\phi)| \leq \frac{1}2  (\dim_\mathbb{F} \widehat{HF}(Y) - |\emph{Tor} H_1(Y;\ZZ)|)+1. 
$$
\end{theorem}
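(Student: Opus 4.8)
The plan is to manufacture, from a fibered knot with large fractional Dehn twist coefficient, a whole family of fibered $3$-manifolds each carrying a co-oriented taut foliation, and then to convert the Floer-theoretic largeness forced by taut foliations into a bound on $\dim_{\mathbb F}\widehat{HF}(Y)$ via surgery exact triangles. First I would normalize: after replacing $K$ by its mirror if necessary, assume $c:=FT(\phi)\ge 0$, and observe that there is nothing to prove when $c<1$, since the right-hand side of the inequality is always at least $1$ (using $\dim_{\mathbb F}\widehat{HF}(Y)\ge|\mathrm{Tor}\,H_1(Y;\ZZ)|$). So assume $c\ge 1$.

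Next, for each integer $k$ with $0\le k<c-1$ (there are $\lceil c\rceil-1$ of them), set $\phi_k:=t_\d^{-k}\phi$. By Proposition~\ref{prop:FDTCprops}(c), $FT(\phi_k)=c-k>1$, and $\phi_k$ agrees with $\phi$ outside a collar of $\partial S$, so it has the same Nielsen--Thurston behaviour away from the boundary. Let $Z_k$ be the closed oriented $3$-manifold supporting the open book $(S,\phi_k)$. Because $\phi_k$ differs from $\phi$ only by boundary twisting, $Z_k$ is obtained from $Y=Z_0$ by Dehn surgery on the binding $K$ (a $\tfrac1k$-surgery with respect to the page framing), and since a fibered knot is null-homologous this surgery preserves first homology: $H_1(Z_k;\ZZ)\cong H_1(Y;\ZZ)$, hence $|\mathrm{Tor}\,H_1(Z_k;\ZZ)|=|\mathrm{Tor}\,H_1(Y;\ZZ)|$. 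Moreover each $Z_k$ carries a co-oriented taut foliation: when the Nielsen--Thurston representative of $\phi$ is pseudo-Anosov, or reducible with pseudo-Anosov behaviour near $\partial S$, this follows from the Honda--Kazez--Mati\'c result \cite{HKM08} quoted above (such an open book supports a contact structure which is a perturbation of a taut foliation), and in the remaining cases it follows from the taut-foliation constructions of Gabai and Roberts.

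Now I would feed in the Floer-theoretic consequence: by Eliashberg--Thurston together with Ozsv\'ath--Szab\'o (or, via the equivalence of monopole and Heegaard Floer homology, Kronheimer--Mrowka), a closed oriented $3$-manifold with a co-oriented taut foliation is not a Heegaard Floer $L$-space; for the manifolds $Z_k$ at hand this gives $\dim_{\mathbb F}\widehat{HF}(Z_k)\ge|\mathrm{Tor}\,H_1(Z_k;\ZZ)|+2=|\mathrm{Tor}\,H_1(Y;\ZZ)|+2$ for each of the $\lceil c\rceil-1$ values of $k$. Finally I would propagate this back to $Y$ through the surgery exact triangles linking the family $Y=Z_0,Z_1,Z_2,\dots$: consecutive manifolds $Z_k$ and $Z_{k+1}$ sit in an exact triangle together with the fixed $0$-surgery $Y_0(K)$, whose Heegaard Floer homology is controlled, in its extremal $\mathrm{spin}^c$ structures, by $\widehat{HFK}(Y,K)$ together with Ni's theorem that a fibered knot has rank-one knot Floer homology in the top Alexander grading. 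Chaining these triangles and keeping track of the $\mathrm{spin}^c$ decomposition so that the contributions of $Y_0(K)$ are accounted for exactly once and the excess rank forced on each distinct $Z_k$ is seen inside $\widehat{HF}(Y)$ should yield $\dim_{\mathbb F}\widehat{HF}(Y)-|\mathrm{Tor}\,H_1(Y;\ZZ)|\ge 2(\lceil c\rceil-1)$, which rearranges to $c\le\lceil c\rceil\le\tfrac12\big(\dim_{\mathbb F}\widehat{HF}(Y)-|\mathrm{Tor}\,H_1(Y;\ZZ)|\big)+1$.

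The crux is this last step. Producing many distinct taut-foliated manifolds from the FDTC hypothesis is routine, and ``taut foliation $\Rightarrow$ large $\widehat{HF}$'' is standard input, but making these bounds genuinely \emph{add up} inside $\widehat{HF}(Y)$ --- rather than being absorbed into the intermediate $0$-surgery terms of the exact triangles --- is where the real work lies, and it is also where the precise constants (the coefficient $\tfrac12$, the additive $+1$, the torsion normalization, and the $b_1(Y)>0$ case) must be extracted carefully. A natural alternative packaging of this step is via Honda's bypass exact triangles for the sutured complement of $K$ with dividing curves of varying slope, where the boundary-twisting of the monodromy appears directly as a change of slope.
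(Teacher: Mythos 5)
You should first note that the paper does not prove this statement at all: it is quoted verbatim from Hedden--Mark \cite{HM18}, so your proposal can only be judged against the published argument, and against that standard it has two genuine gaps. The first is the claim that every $Z_k$ with $FT(\phi_k)>1$ carries a co-oriented taut foliation. The result of \cite{HKM08} you invoke covers only pseudo-Anosov monodromies. When $\phi$ is periodic the $Z_k$ are Seifert fibered, and the existence of a taut (horizontal) foliation there is governed by the Eisenbud--Hirsch--Jankins--Neumann--Naimi inequalities on the Seifert invariants; deducing those inequalities from $FT(\phi_k)>1$ is a real argument, not something covered by a blanket reference to ``Gabai and Roberts,'' and in the reducible case no general statement of this kind is available. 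Since the theorem is asserted for arbitrary monodromies, this case split cannot be waved away.

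The second gap is the one you flag yourself, and it is fatal to the strategy as described. The surgery exact triangle relates $\widehat{HF}(Z_k)$, $\widehat{HF}(Z_{k+1})$ and $\widehat{HF}(Y_0(K))$, and exactness only yields inequalities of the form $\bigl|\dim\widehat{HF}(Z_{k+1})-\dim\widehat{HF}(Z_k)\bigr|\le\dim\widehat{HF}(Y_0(K))$; lower bounds on the ranks of the various $Z_k$ therefore never accumulate into a lower bound on $\dim\widehat{HF}(Z_0)=\dim\widehat{HF}(Y)$. Moreover the input you are trying to propagate is far too weak: for $1/k$-surgeries the rank of $\widehat{HF}(Z_k)$ typically grows linearly in $k$, so ``$Z_k$ is not an $L$-space'' holds for all large $k$ for reasons having nothing to do with $FT(\phi)$, and $\lceil c\rceil-1$ instances of a statement that each contributes only ``$+2$'' to a \emph{different} manifold cannot encode the quantity $2(\lceil c\rceil-1)$ inside the fixed manifold $Y$. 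What is needed is a mechanism that places all of the classes produced by successive boundary twists inside the Floer homology of a single manifold, and that is, in outline, how Hedden and Mark actually argue: they work with the Ozsv\'ath--Szab\'o contact invariant in $HF^+(-Y)$ (the image of the extremal knot Floer class of the binding) and show that the fractional Dehn twist coefficient controls the length of a $U$-tower in $HF^+_{\mathrm{red}}(-Y)$; a $U$-tower of length $\ell$ forces $\dim_{\mathbb F}\widehat{HF}(Y)\ge|\mathrm{Tor}\,H_1(Y;\ZZ)|+2\ell$, which produces the stated constants uniformly in the Nielsen--Thurston type. Your sketch identifies the right family of open books but then disperses the resulting Floer-theoretic information over the surgered manifolds, where it cannot be recollected.
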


\begin{corollary}[\cite{HM18}] \label{HM-braids} Let $\beta$ be an odd-strand braid representative of $K\subset S^3$, and let $\Sigma(K)$ denote the double-branched cover of $K$. Then 
$$|BT(\beta)|\leq \dim_\mathbb{F} \widehat{HF}(\Sigma(K)) -  |H_1(\Sigma(K))|+2.$$ 
\end{corollary}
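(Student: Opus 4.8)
The plan is to deduce Corollary~\ref{HM-braids} from Theorem~\ref{HM:HF} applied to $Y=\Sigma(K)$, using the classical description of $\Sigma(K)$ as an open book built from the braid. So fix an odd-strand braid $\beta\in B_n$, $n\geq 3$, with $\widehat\beta=K$ (the case $n=1$ is trivial). Let $p\colon P\to D^2$ be the double cover of the disk branched over the $n$ braid points. By Riemann--Hurwitz $\chi(P)=2-n<0$, and since $n$ is odd the restriction $p\colon\partial P\to\partial D^2$ is a connected degree-two cover, so $P$ has one boundary component and genus $(n-1)/2$. Every braid commutes with the hyperelliptic involution (equivalently, fixes the $\mathbb{Z}/2$-cohomology class defining $P$), so $\beta$ lifts to a homeomorphism of $P$; taking the unique lift $\widetilde\beta$ fixing a chosen point of $\partial P$ forces $\widetilde\beta$ to fix $\partial P$ pointwise and makes $\beta\mapsto\widetilde\beta$ a homomorphism $B_n\to\mathrm{Mod}(P,\partial P)$. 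The classical construction (Montesinos) then identifies $\Sigma(K)=\Sigma(S^3,\widehat\beta)$ with the total space of the open book whose page is $P$ and whose monodromy is $\widetilde\beta$; its binding is the fibered knot in $\Sigma(K)$ obtained by lifting the braid axis, and the monodromy of its fibration is $\widetilde\beta$.

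The heart of the argument is the identity $FT(\widetilde\beta)=\tfrac12\,BT(\beta)$. To see this, start from the geometric definition of the FDTC: choose a free isotopy $\Phi\colon D_n\times[0,1]\to D_n$ from $\beta$ to its Nielsen--Thurston representative. Each $\Phi_s$ is a homeomorphism of the $n$-punctured disk, hence preserves the set of $n$ branch points (it only permutes them, according to the $n$-cycle carried by $\beta$), so after fixing the starting lift $\widetilde\beta$ the whole family lifts to a free isotopy $\widetilde\Phi\colon P\times[0,1]\to P$; since invariant foliations and periodic/reducible structures pull back along a branched cover, $\widetilde\Phi_1$ is a Nielsen--Thurston representative of $\widetilde\beta$. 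For a basepoint $\tilde q\in\partial P$ over $q\in\partial D^2$, the arc $\widetilde\Phi(\tilde q\times[0,1])$ covers $\Phi(q\times[0,1])$, and because $p\colon\partial P\to\partial D^2$ has degree two, its winding number around $\partial P$ is exactly half the winding number of $\Phi(q\times[0,1])$ around $\partial D^2$; this is the claimed identity. (As a check, $\Delta^2=t_{\partial D_n}$ is supported in a collar of $\partial D^2$ and lifts to a square root of the boundary Dehn twist $t_{\partial P}$, so homogeneity and part (c) of Proposition~\ref{prop:FDTCprops} give $FT(\widetilde{\Delta^2})=\tfrac12 FT(t_{\partial P})=\tfrac12$, matching $\tfrac12 BT(\Delta^2)$.)

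Finally, apply Theorem~\ref{HM:HF} to $Y=\Sigma(K)$ with monodromy $\phi=\widetilde\beta$:
\[
|FT(\widetilde\beta)|\leq \tfrac12\bigl(\dim_{\mathbb{F}}\widehat{HF}(\Sigma(K))-|\mathrm{Tor}\,H_1(\Sigma(K);\mathbb{Z})|\bigr)+1.
\]
Since $\Sigma(K)$ is a rational homology sphere (as always for the double branched cover of a knot in $S^3$), $|\mathrm{Tor}\,H_1(\Sigma(K);\mathbb{Z})|=|H_1(\Sigma(K);\mathbb{Z})|$. Substituting $FT(\widetilde\beta)=\tfrac12 BT(\beta)$ and multiplying by $2$ yields $|BT(\beta)|\leq \dim_{\mathbb{F}}\widehat{HF}(\Sigma(K))-|H_1(\Sigma(K);\mathbb{Z})|+2$, as claimed. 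The main obstacle is the middle step: one must verify that the Nielsen--Thurston representative of $\beta$ only permutes, but does not otherwise disturb, the branch locus, so that the free isotopy genuinely lifts, and that the winding-number bookkeeping across the degree-two boundary cover is carried out with the correct normalization — this is exactly where the factor $\tfrac12$, hence the factor $2$ in the statement, originates.
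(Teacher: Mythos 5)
The paper does not actually prove Corollary~\ref{HM-braids}; it quotes it from \cite{HM18}, remarking only that the bound is obtained ``via open books on the branched double cover.'' Your proposal correctly reconstructs that derivation, and it is essentially the argument of \cite{HM18}: lift $\beta$ via Birman--Hilden to the monodromy $\widetilde\beta$ of the open book on $\Sigma(K)$ whose page is the double cover of the disk branched over the $n$ punctures (connected boundary because $n$ is odd), establish $FT(\widetilde\beta)=\tfrac12 BT(\beta)$, and feed this into Theorem~\ref{HM:HF}, using that $\Sigma(K)$ is a rational homology sphere so that $\mathrm{Tor}\,H_1=H_1$. The arithmetic checks out: the factor $2$ discrepancy between the two displayed inequalities is exactly accounted for by the degree-two boundary cover. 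Your sanity check via the chain relation, $\widetilde{\Delta^2}$ being a square root of $t_{\partial P}$ so that $FT(\widetilde{\Delta^2})=\tfrac12=\tfrac12 BT(\Delta^2)$, is the right consistency test.

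The one place where you are asserting rather than proving is the middle step, and you correctly flag it: one must know that a lift of a Nielsen--Thurston representative is again a Nielsen--Thurston representative (pseudo-Anosov foliations pull back with $1$-pronged singularities at branch points becoming regular points upstairs; periodic and reducible structures lift likewise), and that the free isotopy lifts continuously so that the boundary trace upstairs is the degree-two lift of the trace downstairs, whence the winding number halves. These are standard Birman--Hilden facts, but in a self-contained write-up they deserve a citation or a short argument, since the entire factor of $\tfrac12$ rests on them. With that caveat, the proposal is correct.
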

%\todo[noline]{Corrected typo in the corollary}

Further, Hedden and Mark use the relation between the Khovanov homology of a link and the Heegaard Floer homology of its  branched double cover to prove a bound on $BT$ in terms of the 
rank of reduced Khovanov homology $\widetilde{Kh}$:
\begin{theorem}[\cite{HM18}]\label{HM:Kh}  
 Let $L$ be  a  link  in $S^3$,  and  let $\beta$ be any braid representative of $L$ with an odd number of strands. Then 
 $$
 BT(\beta) \leq  \dim_{\mathbb{F}} \widetilde{Kh} (-L) - |\det(L)|+2. 
 $$
\end{theorem}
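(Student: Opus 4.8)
The plan is to deduce Theorem~\ref{HM:Kh} from the branched-double-cover bound behind Corollary~\ref{HM-braids}, together with the Ozsv\'ath--Szabo spectral sequence relating reduced Khovanov homology to the Heegaard Floer homology of the double branched cover. Concretely, I would assemble two ingredients: (i) an upper bound for $BT(\beta)$ in terms of $\dim_{\mathbb F}\widehat{HF}(\Sigma(\widehat\beta))$ and $H_1(\Sigma(\widehat\beta))$, valid for \emph{links} and not just knots; and (ii) the rank comparison $\dim_{\mathbb F}\widehat{HF}(\Sigma(L))\le\dim_{\mathbb F}\widetilde{Kh}(-L)$ from that spectral sequence, plus the classical identity relating $|H_1(\Sigma(L);\ZZ)|$ and $|\det(L)|$.

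For (i), I would revisit the construction underlying Corollary~\ref{HM-braids}. Given an $n$-braid $\beta$ with $n$ odd and closure $L=\widehat\beta$, the disk fibration of the complement of the braid axis $A$ lifts, under the double cover of $S^3$ branched over $L$, to an open book on $\Sigma(L)$: its page is the double cover of a disk branched over the $n$ points $L\cap D$ (a once-bounded surface of genus $(n-1)/2$), and its monodromy $\widetilde\phi$ is the lift of the braid monodromy. Since $\mathrm{lk}(A,L)=n$ is odd, the preimage of $A$ is connected, so this open book has connected binding, and tracking the boundary twisting through the degree-two cover of $\partial D$ gives $FT(\widetilde\phi)=\tfrac12 BT(\beta)$. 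Nothing here uses that $L$ has a single component, so Theorem~\ref{HM:HF} applies directly to $\widetilde\phi$ and, after multiplying through by $2$, yields
$$
BT(\beta)\ \le\ \dim_{\mathbb F}\widehat{HF}(\Sigma(L))\ -\ |\mathrm{Tor}\,H_1(\Sigma(L);\ZZ)|\ +\ 2 .
$$

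For (ii), recall that $|\mathrm{Tor}\,H_1(\Sigma(L);\ZZ)|\ge|\det(L)|$, with equality whenever $\det(L)\ne 0$, in which case $\Sigma(L)$ is a rational homology sphere and $|H_1(\Sigma(L);\ZZ)|=|\det(L)|$; when $\det(L)=0$ the inequality is trivial. And the Ozsv\'ath--Szabo spectral sequence, whose $E_2$-page is the reduced Khovanov homology of the mirror and which converges to $\widehat{HF}$ of the double branched cover, forces $\dim_{\mathbb F}\widehat{HF}(\Sigma(L))\le\dim_{\mathbb F}\widetilde{Kh}(-L)$; over $\mathbb F=\ZZ/2$ reduced Khovanov homology is insensitive to mirroring and $\widehat{HF}$-rank is insensitive to orientation reversal, so the mirror and orientation conventions cause no trouble. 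Substituting both facts into the displayed inequality gives the stated bound.

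The genuinely deep input, the existence of the Khovanov-to-$\widehat{HF}$ spectral sequence, enters here as a black box, so the real content of this argument is bookkeeping: verifying that the odd-number-of-strands hypothesis is exactly what makes the branched-cover open book have connected binding (so that Theorem~\ref{HM:HF} applies and the factor $\tfrac12$ appears), that a multi-component link presents no new difficulty because the axis still links $L$ a total of $n$ times, and that the mirror/orientation conventions in the spectral sequence together with the $\det$-versus-$H_1$-torsion comparison line up so that the final inequality points in the right direction. I expect this last convention check, and correctly justifying the factor $\tfrac12$, to be the only subtle points.
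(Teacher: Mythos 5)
Your argument is correct and is exactly the route the paper attributes to Hedden and Mark (the paper only cites Theorem~\ref{HM:Kh} from \cite{HM18} and does not reprove it): lift the disk fibration of the axis complement to the open book on $\Sigma(L)$ with connected binding and page the branched double cover of the disk, apply Theorem~\ref{HM:HF} to the lifted monodromy with $FT(\widetilde\phi)=\tfrac12 BT(\beta)$, and then convert $\dim\widehat{HF}(\Sigma(L))$ and $|\mathrm{Tor}\,H_1(\Sigma(L))|$ into $\dim\widetilde{Kh}(-L)$ and $|\det(L)|$ via the Ozsv\'ath--Szab\'o spectral sequence and the standard homology computation. Your bookkeeping of the factor $\tfrac12$ reproduces the constant $+2$ in Corollary~\ref{HM-braids}, and the inequalities $\dim\widehat{HF}(\Sigma(L))\le\dim\widetilde{Kh}(-L)$ and $|\mathrm{Tor}\,H_1(\Sigma(L))|\ge|\det(L)|$ both point the right way, so the substitution is valid.
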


\subsection{Dehornoy's braid ordering} \label{sec:braidorder}

We will now describe another, more algebraic, approach to the fractional Dehn coefficient for braids, based on Dehornoy's braid ordering.
The braid group $B_m$ is known to be orderable, namely, there exists a {\em
left-invariant} linear order on $B_m$, so that
 if  $\beta_2 \succ \beta_1$,  then  $\gamma \beta_2 \succ \gamma \beta_1$ for
any   $\gamma \in B_m$.
An ordering can be defined by
considering the action of the braid monodromy $\beta \in Map (D, Q)$ on the
  on the
``standard'' punctured disk $D$,  with the set of punctures $Q$ on the
$y$-axis, labeled $Q=\{p_1, p_2, \dots p_m\}$ from bottom to top. (See Figure~\ref{fig:arc-a} in Section 4.)
Roughly, $\beta_2 \succ \beta_1$ iff $\beta_2$ twists the $y$-axis
more to the right than $\beta_1$.  These ideas
can be traced back to W. Thurston and can be extended to 
obtain many different orderings (known as {\em Nielsen--Thurston}
orderings).  

Dehornoy \cite{De94} defined an ordering on $B_m$ from an
algebraic perspective. The algebraic definition is equivalent to the geometric one described above. We say that  
 $\beta  \succ 1$  iff the braid $\beta$ admits a
 braid word that contains the generator $\sigma_i$ but no
$\sigma_i^{-1}$ and no $\sigma_j$ for $j<i$. (A word of this form is called
$\sigma_i$-positive, which has the following geometric interpretation. Suppose we apply a $\sigma_{i}$-positive word $\beta$ to the punctured disk and pull the
image of the $y$-axis taut while fixing the punctures. Then the image of the $y$-axis under $\beta$
will first diverge from $y$-axis at a point between the $(i-1)$'st and $i$'th punctures and at this point, the image will go to the right of the $y$-axis.)
 Then, for $\beta, \beta'\in B_m$ we define
$\beta \succ \beta'$  if $(\beta')^{-1}\beta  \succ 1$.  
 From the algebraic perspective, checking
the basic properties of the ordering is highly non-trivial; from the  geometric perspective, it is not hard to see that
we get a well-defined linear order, \cite{FGRRW99}.
The algebraic  approach becomes useful if one wants to study
combinatorial braid invariants. For example, it follows from \cite{FGRRW99} that a non-right-veering braid is conjugate to a braid with a braid word where a generator 
$\sigma_i$ enters with negative exponents only (compare with Proposition~\ref{arcs-FT}). The specific property of the braid word allows to relate geometry and combinatorics; this approach was used in 
\cite{BG, Pl18}. 

Let $\Delta = (\sigma_1\sigma_2 \dots \sigma_{n-1})(\sigma_1 \sigma_2 \dots \sigma_{n-2})\dots (\sigma_1 \sigma_2)(\sigma_1) 
\in B_n$ be the Garside fundamental braid. Observe that $\Delta^2 = (\sigma_1 \dots \sigma_{n-1})^n$ is a full twist on $n$ strands. 

Using orderings, one defines the {\em Dehornoy's floor} $[\beta]_D=n
$ of a braid $\beta\in B_m$ as an integer $n$ such that 
$\Delta^{2n+2} \succ  \beta  \succ \Delta^{2n}$. 
The Dehornoy floor is related to the FDTC as follows
\cite{Ma04}:
\begin{equation} \label{BTfloor}
  [\beta]_{D} +1  \geq  BT(\beta) \geq [\beta]_{D} \quad
\text{ and  } \quad BT(\beta)  = \lim_{n \to \infty} \frac{[\beta^n
]_{D}}{n}.  
\end{equation}
In other words, the FDTC can be thought of as ``homogenization'' of the Dehornoy
floor, where one averages over large iterates of the braid. It is important to note that 
while $BT$ is an invariant of the conjugacy class of the braid, the Dehornoy floor is not. 

The following theorem of Ito \cite{It11} serves as motivation for our work. 

\begin{theorem}[\cite{It11}] \label{thm:ito}
If $K$ is represented by an $n$-strand braid $\beta$, then %\marginpar{$g(K)$ is replaced by $g_3(K)$.}
$$
|[\beta]_D|<\frac{4g_3(K)-2}{n+2}+\frac{3}{2}\leq g_3(K)+1.
$$
\end{theorem}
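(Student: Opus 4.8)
The plan is to reconstruct Ito's braid‑foliation argument, which extracts the bound from the Euler characteristic of a minimal‑genus Seifert surface of $K$ placed in braid position. Write $m=[\beta]_D$. The statement is vacuous when $n=1$, and when $|m|\le 1$ both inequalities are immediate (for $m=1$ the left inequality reads $-\tfrac12<\tfrac{4g_3(K)-2}{n+2}$, which holds since the right side is at least $\tfrac{-2}{n+2}>-\tfrac12$), so I may assume $n\ge2$ and $m\ge2$ (if $[\beta]_D\le-2$, run the argument below with the orientation of the disk fibration reversed, which changes neither $g_3(K)$ nor $n$).

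So assume $m\ge 2$, i.e.\ $\beta\succ\Delta^{2m}$. Fix a minimal‑genus Seifert surface $F$ for $K=\widehat\beta$, so $-\chi(F)=2g_3(K)-1$, and isotope $F$ into braid position with respect to the braid axis $A$ and the disk fibration of $S^3\setminus A$ (Birman--Menasco). This equips $F$ with a singular braid foliation whose only singularities are elliptic points --- the transverse intersections of $A$ with $F$ --- and hyperbolic saddles; after removing inessential closed leaves one has $\chi(F)=(e_++e_-)-(h_++h_-)$ in the standard signed notation, and the algebraic intersection $A\cdot F=\mathrm{lk}(A,\partial F)=n$ gives $e_+-e_-=n$, hence $e_++e_-=n+2e_-$. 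The crucial input --- and the step I expect to be the main obstacle --- is the estimate
\[
-\chi(F)\ \geq\ \tfrac12(n+2)\bigl(m-\tfrac32\bigr),
\]
forced by $\beta\succ\Delta^{2m}$. Heuristically this is a Lipschitz bound in the cyclic ``movie'' of arc systems $F\cap D_\theta$, $\theta\in S^1$: each hyperbolic point is an elementary change of that arc system near $A$, and passing one such change can alter the amount of rightward twisting of the $y$‑axis (the combinatorial datum underlying the Dehornoy floor; compare the quasimorphism property in Proposition~\ref{prop:FDTCprops} and \eqref{BTfloor}) only by a controlled amount; since $\beta$ twists the $y$‑axis past $\Delta^{2m}$, on the order of $\tfrac12(n+2)m$ such changes are required, and subtracting the elliptic contribution $n+2e_-$ leaves the displayed inequality, with the additive $\tfrac32$ and the strictness absorbing the exact constants and finitely many degenerate local configurations. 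Pinning down the constants in this count, via the combinatorics of braid foliations, is the technical heart.

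Granting the displayed inequality, $2g_3(K)-1=-\chi(F)\ge\tfrac12(n+2)\bigl(m-\tfrac32\bigr)$ rearranges immediately to $m<\tfrac{4g_3(K)-2}{n+2}+\tfrac32$; together with the symmetric lower bound this gives $|[\beta]_D|<\tfrac{4g_3(K)-2}{n+2}+\tfrac32$. The last inequality $\tfrac{4g_3(K)-2}{n+2}+\tfrac32\le g_3(K)+1$ is elementary: clearing denominators it becomes $4\bigl(2g_3(K)-1\bigr)\le\bigl(2g_3(K)-1\bigr)(n+2)$, which holds for $n\ge2$ whenever $g_3(K)\ge1$; and $g_3(K)\ge1$ is automatic in the case at hand, since $m\ge2$ together with the displayed inequality forces $-\chi(F)>0$. (When $g_3(K)=0$ one has $|m|\le1$, whence $|[\beta]_D|\le1=g_3(K)+1$, so there the chain of inequalities should be read as the two separate assertions.)
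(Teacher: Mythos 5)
A point of context first: the paper does not prove this statement at all --- Theorem~\ref{thm:ito} is quoted from \cite{It11}, and the authors explicitly remark that ``Ito's proof uses braid foliation techniques.'' So your proposal has to stand on its own, and on its own it has a genuine gap at the decisive step. The entire content of the theorem is the estimate $-\chi(F)\geq\tfrac12(n+2)\bigl(m-\tfrac32\bigr)$ forced by $\beta\succ\Delta^{2m}$, and you do not prove it: you set up the standard braid-foliation bookkeeping ($\chi(F)=e_++e_--h_+-h_-$, $e_+-e_-=n$) and then substitute a heuristic --- each hyperbolic singularity changes the rightward twisting of the $y$-axis ``only by a controlled amount'' --- before writing ``granting the displayed inequality.'' The quasimorphism/Lipschitz picture is the right intuition, but converting it into the stated constants requires the actual combinatorial analysis of how the arc system $F\cap D_\theta$ changes across a hyperbolic point and how that interacts with the $\sigma_i$-positivity condition defining the Dehornoy floor; that analysis is precisely the technical core of Ito's paper and cannot be ``absorbed'' into the additive $\tfrac32$. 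You also never account for where strictness comes from: a non-strict bound $-\chi(F)\geq\tfrac12(n+2)\bigl(m-\tfrac32\bigr)$ yields only $m\leq\tfrac{4g_3(K)-2}{n+2}+\tfrac32$, not the strict inequality claimed.

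The surrounding reductions are essentially sound: the mirror-symmetry reduction to $m\geq2$, and the elementary verification that $\tfrac{4g_3(K)-2}{n+2}+\tfrac32\leq g_3(K)+1$ amounts to $(2g_3(K)-1)(n-2)\geq0$. Your observation that this second inequality actually fails when $g_3(K)=0$ and $n\geq3$, so that the displayed chain must be read as two separate assertions in that case, is a fair and careful reading of the statement (one minor slip: for $|m|\leq1$ and $n=2$, $g_3=0$ you get equality $\tfrac{-2}{n+2}=-\tfrac12$, not strict inequality, though that configuration does not actually occur). But as written the argument is an annotated outline with the central estimate assumed, not a proof.
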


Note that Ito uses a slightly different definition of the Dehornoy floor, so that his formulas in \cite{It11} do not have the absolute value.
Ito's proof uses braid foliation techniques. It would be interesting to establish a similar bound via knot homologies.

\section{Concordance invariants and genus bounds}

In the last two decades, a number of knot-homological invariants were introduced to study knot concordance and give bounds for the slice genus. It would be interesting to find 
relations between these invariants and the FDTC of fibered knots or braids. We briefly review the invariants that we need.

A number of invariants come from knot Floer homology, introduced independently by Ozsv\'{a}th and Szab\'{o} in \cite{OS03} and by Rasmussen in \cite{Ra03}. See also \cite{Ma16} for a survey. 
For the simplest version, they associate a $\ZZ$-filtered 
chain complex $\widehat{CFK}(K)$ to a knot $K$. This chain complex is a powerful knot invariant; in particular, it detects the Seifert genus $g_3(K)$ 
\cite{OS04GenusBounds} and fiberedness \cite{Gh08},\cite{Ni07}. 
Total homology of this chain complex is of rank 1. The minimum filtration level in which the homology is supported yields an integer $\tau$, which is a concordance invariant,
\cite{OS03}. A concordance invariant $s(K)$ with similar properties was found by Rasmussen \cite{Ra10} using Khovanov homology \cite{Kh00}. For a link $L \subset S^3$, 
the Khovanov homology is  a link invariant $Kh(L)$, defined as  the cohomology of a 
bigraded chain complex $(CKh(D_L),d)$ associated to a diagram $D_L$ of the link. The invariant  $s(K)$ comes from Lee's deformation \cite{Le05} of the Khovanov differential and the 
resulting spectral sequence. The invariants $s(K)$ and $2\tau(K)$ share a number of properties, given in the next proposition, and coincide for many small knots but are known 
to differ in general 
\cite{HO08}.

\begin{proposition}[\cite{OS03, Ra10}]\label{prop:tauprops}  The maps $\tau: \mathcal{C} \rightarrow \mathbb{Z}$ and $s: \mathcal{C} \rightarrow \mathbb{Z}$ are surjective homomorphisms on the knot concordance group $\mathcal{C}$. They satisfy the following properties:
\begin{enumerate}
\item The absolute values of $\tau(K)$ and $\frac{s(K)}2$ give  lower bounds on the slice genus $g_4(K)$, 
$$
    |\tau(K)| \leq g_4(K), \qquad |s(K)| \leq 2 g_4(K).  
$$
\item If $\beta$ is a positive $n$-braid of length $k$ whose closure $\hat\beta$ is a knot, then $s(\hat \beta)= 2\tau(\hat\beta)=k-n+1.$ In particular, 
for the $(p,q)$-torus knot $T_{p,q}$, $p,q \geq 1$, we have 
$$
s(T_{p,q})= 2\tau(T_{p,q})= (p-1)(q-1)=2g(T_{p,q}).
$$
\item $\tau(-K)=-\tau(K)$  and $s(-K)=-s(K)$ where $-K$ denotes the concordance inverse of $K$. (In fact this follows directly from the fact that $\tau$ and $\frac{s}{2}$ are homomorphisms.) \\
\item Let $K_+$ be a knot, and $K_-$ the new knot obtained by changing one positive crossing in $K_+$ to a negative crossing. Then 
\[ \tau(K_+)-1 \leq \tau(K_-) \leq \tau(K_+)\]
    and
    \[ s(K_+)-2 \leq s(K_-) \leq s(K_+).\]

\end{enumerate}
    \end{proposition}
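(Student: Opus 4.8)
We sketch a plan. All four statements are classical, due to Ozsv\'ath--Szab\'o \cite{OS03} for $\tau$ and to Rasmussen \cite{Ra10} for $s$, and the plan is to recall the arguments, running the two cases in parallel since $\tau$ and $s$ are built precisely so as to enjoy the same formal behaviour: $\tau(K)$ is the least Alexander filtration level that supports the generator of the one-dimensional total homology of $\widehat{CFK}(K)$, while $s(K)=s_{\min}(K)+1$ is read off the filtration that Lee's deformation of the Khovanov differential induces on $CKh(K)$. The homomorphism property is proved the same way in both settings: additivity under connected sum comes from a K\"unneth-type identification of $\widehat{CFK}(K_1\#K_2)$, respectively of the Lee complex of $K_1\#K_2$, with the filtered tensor product of the corresponding complexes of $K_1$ and $K_2$, so that the filtration level of the preferred generator of total homology adds; vanishing on slice knots is the $g_4=0$ case of part (1). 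Hence $\tau$ and $\tfrac{s}{2}$ descend to homomorphisms on $\mathcal C$.

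Part (1) is the heart of the matter and the only step that is genuinely analytic rather than formal; I expect it to be the main obstacle. For $s$ I would follow Rasmussen: a smooth connected oriented cobordism $C\subset S^3\times[0,1]$ from $K_0$ to $K_1$ induces a map on Lee homology that is nonzero on the distinguished orientation classes and shifts the $s$-filtration grading by an amount governed by $\chi(C)$; decomposing $C$ into elementary pieces --- births and deaths of unknotted circles, which change nothing, and $-\chi(C)$ saddles --- and adding up the per-saddle shifts gives $|s(K_0)-s(K_1)|\le -\chi(C)=2g(C)$ for a connected genus-$g$ cobordism between knots. Applying this to a minimal-genus surface for $K$ in $B^4$, viewed (after capping the other end with a disk) as a genus-$g_4(K)$ cobordism from the unknot to $K$, and using $s(\text{unknot})=0$, yields $|s(K)|\le 2g_4(K)$. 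For $\tau$ the outline is the same, with cobordism maps on knot Floer homology --- or, as in the original argument of \cite{OS03}, a relative adjunction inequality for Heegaard Floer homology on the exterior of the slice surface --- in place of the Lee cobordism maps; here I would simply invoke Theorem~1.1 of \cite{OS03} for the resulting estimate $|\tau(K_0)-\tau(K_1)|\le g(C)$, which is the technical crux of the whole proposition.

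In part (2) the upper bound is soft. Seifert's algorithm applied to the closed-braid diagram of a positive $n$-braid of length $k$ has the $n$ braid strands as its Seifert circles and the $k$ letters as its bands, producing a Seifert surface of Euler characteristic $n-k$ and genus $\tfrac12(k-n+1)$; hence $\tau(\hat\beta)\le g_4(\hat\beta)\le g_3(\hat\beta)\le\tfrac12(k-n+1)$ and, likewise, $s(\hat\beta)\le 2g_4(\hat\beta)\le k-n+1$. The matching lower bound is the slice--Bennequin inequality: for the closure of any $n$-braid of exponent sum $e$ one has $2\tau(\hat\beta)\ge e-n+1$ and $s(\hat\beta)\ge e-n+1$ (equivalently, the self-linking bounds for transverse links, or Rudolph's slice--Bennequin theorem for quasipositive surfaces), and for a positive braid $e=k$, so both inequalities are forced to equalities and $s(\hat\beta)=2\tau(\hat\beta)=k-n+1$. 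The torus-knot statement is the case $\beta=(\sigma_1\cdots\sigma_{p-1})^q$, for which $n=p$, $k=q(p-1)$ and $k-n+1=(p-1)(q-1)=2g(T_{p,q})$; alternatively one reads $\tau(T_{p,q})=g(T_{p,q})$ directly off the staircase complex $\widehat{CFK}(T_{p,q})$. Since $\tau(T_{2,3})=1$ and $s(T_{2,3})=2$, this also yields the surjectivity asserted above, with $\tfrac{s}{2}$ onto $\mathbb Z$.

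Finally, part (3) is formal: $-K$ is the concordance inverse of $K$ and $\tau,\tfrac{s}{2}$ are homomorphisms vanishing on slice knots, so $\tau(K)+\tau(-K)=\tau(K\#(-K))=0$, and similarly $s(K)+s(-K)=0$; it can also be seen from the chain-level dualities that identify $\widehat{CFK}$ and $Kh_{\mathrm{Lee}}$ of the mirror of $K$ with the duals of $\widehat{CFK}(K)$ and $Kh_{\mathrm{Lee}}(K)$, reversing the relevant filtrations. For part (4), changing a positive crossing of $K_+$ to a negative one to produce $K_-$ is a local move fitting into the oriented skein triple $(K_+,K_-,K_0)$: the two-sided bound $\tau(K_+)-1\le\tau(K_-)\le\tau(K_+)$ is extracted from the knot-Floer skein exact sequence together with the filtration behaviour of its maps \cite{OS03}, and $s(K_+)-2\le s(K_-)\le s(K_+)$ from the analogous skein and crossing-change-cobordism estimates on the Khovanov--Lee side \cite{Ra10}. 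Thus, modulo these standard skein and cobordism estimates, the only substantial input is part (1).
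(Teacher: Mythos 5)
This proposition is stated in the paper as a known result, with no proof given beyond the citations to \cite{OS03} and \cite{Ra10} (and, for part (4), the remark pointing to Corollary 3 of \cite{Li04}); your sketch is an accurate recall of exactly those sources, so there is nothing in the paper to diverge from. Two small points are worth noting. First, your lower bound in part (2) invokes the slice--Bennequin inequality for $\tau$ and $s$, but in this paper that inequality (Lemma~\ref{tau-bound}, and the cited \cite{Li04, Pl06}) is itself \emph{deduced from} Proposition~\ref{prop:tauprops}(2)--(4); this is not circular in the literature, since the $\tau$ version rests on the independent torus-knot computation (the staircase argument you mention) plus the crossing-change and connected-sum formulas, and the $s$ version has direct proofs, but you should make clear which independent input anchors the induction. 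Second, the one-sided sharpening in part (4) --- $\tau(K_-)\le\tau(K_+)$ rather than merely $|\tau(K_+)-\tau(K_-)|\le 1$ --- does not follow from a bare genus-one cobordism estimate; it needs the positivity of the crossing trace (the blown-up disk argument of \cite{OS03}, or Livingston's formal argument from (1)--(3)), which your appeal to ``standard skein and cobordism estimates'' glosses over but which the cited sources do supply.
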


\begin{remark}
It is worth remarking that \emph{any} concordance homomorphism satisfying (1)-(3) will also satisfy a crossing change formula analogous to the one given in (4). Specifically, the argument given in Corollary 3 of \cite{Li04} uses only the formal properties (1)-(3).
\end{remark}

The bound on the slice genus is shown to be sharp for the classes of positive, quasipositive, and strongly quasipositive knots, 
 \cite{OS03, Pl04}, \cite{Li04}. In general, we have the following estimate for $\tau$ and $s$:

\begin{lemma}\label{tau-bound}
Let $\beta$ be an $n$-braid with $k$ positive crossings and $\ell$ negative crossings. If the closure of $\beta$ is a knot $\hat \beta$ then 
 $$ \frac12 (k-\ell-n+1) \leq \tau(\hat \beta)\leq \frac12(k-\ell+n-1)$$
and $$k-\ell-n+1 \leq s(\hat \beta)\leq k-\ell+n-1.$$

\end{lemma}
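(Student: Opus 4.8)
The plan is to reduce the general braid to a positive braid (for one bound) and to a negative braid (for the other) by sequences of crossing changes, and then to combine the crossing-change inequalities of Proposition~\ref{prop:tauprops}(4) with the exact computation of $\tau$ and $s$ for closures of positive braids in Proposition~\ref{prop:tauprops}(2). The preliminary point to record is that changing a crossing $\sigma_i^{\pm1}\mapsto\sigma_i^{\mp1}$ inside a braid word leaves the underlying permutation, and hence the number of components of the closure, unchanged; thus every link appearing along such a sequence of changes is again a knot, and Proposition~\ref{prop:tauprops}(4) is applicable at each step.

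For the lower bounds, starting from $\beta$ I would change each of its $\ell$ negative crossings into a positive crossing, producing a positive $n$-braid $\beta_+$ of word length $k+\ell$ whose closure is a knot. By Proposition~\ref{prop:tauprops}(2), $\tau(\widehat{\beta_+})=\tfrac12(k+\ell-n+1)$ and $s(\widehat{\beta_+})=k+\ell-n+1$. Reading the crossing changes in the order that turns $\beta$ into $\beta_+$, each step replaces a $K_-$ by the corresponding $K_+$, so by Proposition~\ref{prop:tauprops}(4) the quantity $\tau$ is weakly increasing and grows by at most $1$ at each of the $\ell$ steps (and $s$ is weakly increasing, growing by at most $2$). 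Hence $\tau(\widehat{\beta_+})-\ell\le\tau(\hat\beta)\le\tau(\widehat{\beta_+})$, and substituting gives $\tfrac12(k-\ell-n+1)\le\tau(\hat\beta)$; the analogous computation for $s$ gives $k-\ell-n+1\le s(\hat\beta)$.

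For the upper bounds I would instead change each of the $k$ positive crossings of $\beta$ into a negative crossing, producing a negative $n$-braid $\beta_-$ with $k+\ell$ crossings. Its mirror is the closure of a positive $n$-braid of word length $k+\ell$, so Proposition~\ref{prop:tauprops}(2) and (3) give $\tau(\widehat{\beta_-})=-\tfrac12(k+\ell-n+1)$ and $s(\widehat{\beta_-})=-(k+\ell-n+1)$. Now each of the $k$ steps replaces a $K_+$ by the corresponding $K_-$, so $\tau$ is weakly decreasing and drops by at most $1$ per step (and $s$ is weakly decreasing, dropping by at most $2$); thus $\tau(\widehat{\beta_-})\le\tau(\hat\beta)\le\tau(\widehat{\beta_-})+k$, which after substitution yields $\tau(\hat\beta)\le\tfrac12(k-\ell+n-1)$, and similarly $s(\hat\beta)\le k-\ell+n-1$. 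Combining with the previous paragraph finishes the proof.

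I do not anticipate a genuine obstacle here: the argument is a bookkeeping exercise built on the two cited parts of Proposition~\ref{prop:tauprops}. The only points requiring care are getting the directions of the crossing-change inequalities right (in particular, which closure plays the role of $K_+$ and which of $K_-$ at each step), and remembering the sign flip introduced by passing to the mirror when handling $\beta_-$; the invariance of the component count under crossing changes, noted at the outset, is what guarantees that every intermediate object to which Proposition~\ref{prop:tauprops}(4) is applied is an honest knot.
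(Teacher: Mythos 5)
Your argument is correct and is essentially the same as the paper's: both define $\beta_+$ and $\beta_-$ by switching all negative (resp.\ positive) crossings, compute $\tau$ and $s$ of their closures via Proposition~\ref{prop:tauprops}(2) and (3), and sandwich $\tau(\hat\beta)$ and $s(\hat\beta)$ using the crossing-change inequalities of part (4). Your explicit remark that crossing changes preserve the permutation, so every intermediate closure is a knot, is a detail the paper leaves implicit but is a worthwhile addition.
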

\begin{proof}
Let $\beta_+$ denote the braid obtained from $\beta$ by changing all of the negative crossings to positive and $\beta_-$ the braid 
obtained by changing all of the positive crossings to negative. Then by part (3) of Proposition~\ref{prop:tauprops}, we have $\tau(\hat\beta_+)-\ell\leq \tau(\hat\beta)$ 
and $\tau(\hat\beta)\leq \tau(\hat\beta_-)+k$.
Now  parts (2) and (4) imply that $$\tau(\hat\beta_+)=\frac{1}{2}(k+\ell-n+1)$$ since $\hat\beta_+$ is a positive braid, and
$$\tau(\hat\beta_-)=-\frac{1}{2}(k+\ell-n+1)$$ since $\hat\beta_-=-\hat\beta_+$.

Putting this information together, we see that $$ \frac{1}{2}(k-\ell-n+1)=\tau(\hat\beta_+)-\ell\leq \tau(\hat\beta)\leq  \tau(\hat\beta_-)+k= \frac{1}{2}(k-\ell+n-1).$$
Similarly, for $s$ we have $$s(\hat\beta_+)=k+\ell-n+1$$ and $$s(\hat\beta_-)=-s(\hat\beta_+)=-k-\ell+n-1.$$ The crossing change formula now gives the same inequality we had for $\tau$, except is it multiplied by the necessary factor of two, 
 $$s(\hat\beta_{+}) - 2\ell \leq s(\hat\beta)\leq s(\hat\beta_{-}) + 2k$$ 
Thus, $$k-\ell-n+1\leq s(\hat\beta)\leq k-\ell+n-1.$$
\end{proof} 

\begin{remark}
The lower bound for $\tau(\hat\beta)$ above appears in \cite[Corollary 11]{Li04}.
The lower bound for $s(\widehat{\beta})$  appears in~\cite[Proposition~4]{Pl06} and~\cite[Lemma~1.C]{Sh07}. Each of these bounds immediately
implies the slice-Bennequin inequality due to Rudolph. The upper bound for $s(\widehat{\beta})$ above is no stronger than the bounds from~\cite[Theorem~1.10]{Lo11} 
and additionally can be obtained from~\cite[Theorem~3.5]{Ma19}.
\end{remark}

The full knot Floer complex yields further concordance invariants, such as $\nu^+$ \cite{HW16}. We will discuss their connection to monodromies as well.   Since these invariants give lower bounds for the slice genus, we can ask about their relation to the FDTC.

Using properties of $\tau$ and $s$, we
can easily prove that if one starts  with a fixed knot and adds many full positive twists,  then 
indeed the genus, slice genus, $\tau$ and $s$ invariants all grow.  If $\Delta$ is the Garside element, the braid $\beta \Delta^{2n}$  is the concatenation of
$\beta$ with $n$ positive full 
twists. Note that $BT( \beta \Delta^{2n}  )= BT(\beta)+n$ by Proposition~\ref{prop:FDTCprops}.
We prove Proposition~\ref{asymp} from the Introduction:
\asymp*

%\todo[inline]{Changed the proof entirely following the referee's line of reasoning - please check!}

\begin{proof} Suppose that $\beta$ has $k$ positive crossings and $l$ negative crossings. We first show that if $n\geq l$ then the braid $\beta\Delta^{2n}$ is a positive braid. Using the braid group relations it is not hard to show that $\Delta^2\sigma_i^{-1}$ is a positive braid for each $1\leq i\leq m-1$. Since $\Delta^2$ is central, each negative crossing can be removed by a single full-twist. 

Thus, when $n\geq l$, since $K_n$ is the closure of a positive braid, $g_3(K_n)=g_4(K_n)$ \cite{KM94}. In fact, we have $g_3(K_n)=g_4(K_n)=\tau(K_n)$ since $\tau$ is equal to the slice-genus for positive braids \cite{Pl04}. By Proposition \ref{prop:tauprops}(2), $\tau(K_n)$ is a simple function of length of the positive braid $\beta\Delta^{2n}$ and the number of strands:  $$g_3(K_n)=g_4(K_n)=\tau(K_n)=\frac12(k+(m-1)mn-l-m+1).$$

Since $m$, $l$ and $k$ are constant, we achieve the desired asymptotics. 

\end{proof}

The above proposition tells us that the FTDC, the 3-genus, and the slice genus
have similar asymptotics  when we add more and more full twists to a given braid,
but unlike Ito's bound, it gives no information about the relation between the FTDC and genus of
the original braid, before the twists are added. Note that Ito's bound is weaker if we add a large
number of positive twists to a fixed braid: it only says that the genus will
grow as  $\frac{1}{4}
n m$.

Adding a single full twist to a braid increases the FDTC by one. 
A natural question to ask is whether this move also necessarily changes the slice genus, or whether it changes the concordance class of the knot.
Certainly the above proposition shows that this is true asymptotically, but we show in the following proposition that 
Theorem 1.6 of Sato's work in \cite{Sa18} implies that this is also true for any fixed number of full twists for braids with three or more strands. 

\propsato*
\begin{proof}  Hom and Wu in \cite{HW16}  
define a refinement of the $\tau$-invariant called $\nu^+$ arising from the knot Floer complex. 
This invariant is a knot concordance invariant. In Proposition 1.5 of \cite{Sa18}, 
Sato defines a partial order $<_{\nu^{+}}$ on the concordance group mod $\nu^+$-equivalence. 
Let $\beta_{1}$ be the braid obtained by adding a single positive full-twist to $\beta$, that is, $
\beta_{1} = \beta\Delta^{2}$. Theorem 1.6 (2) of \cite{Sa18}  implies that $[\widehat{\beta}] <_{\nu^{+}} [\widehat{\beta_{1}}]$. 
In particular, $\widehat{\beta}$ and $\widehat{\beta_{1}}$ are not concordant. 
As we add more full twists, we see that
$[\widehat{\beta}] <_{\nu^{+}} [\widehat{\beta_{1}}] <_{\nu^{+}} [\widehat{\beta_{2}}] <_{\nu^{+}} \cdots <_{\nu^{+}}  [\widehat{\beta_{k}}]$.
So all of these knots live in different $\nu^{+}$-equivalence classes and none of them can be concordant to each other.
\end{proof}

We note that for braids with two strands, Proposition~\ref{prop:sato} is false, as the braids $\sigma_{1}^{-1}$ and $\sigma_{1}$ differ from each other 
by a single full twist in $B_{2}$ but both close to the unknot.

%\begin{lemma}
%Let $\beta=(\Delta^2)^k\sigma_1^{-1}\sigma_2^{-(6k-1)}$ then $$FDTC(\beta)=k$$
%\end{lemma}
%
%\begin{proof}
%Note that $\beta^s=[(\Delta^2)^k\sigma_1^{-1}\sigma_2^{-(6k-1)}]^s=\Delta^{2sk}[\sigma_1^{-1}\sigma_2^{-(6k-1)}]^s$. Thus $$\Delta^{2m+2}\beta^s=\Delta^{2m+2}[(\Delta^2)^k\sigma_1^{-1}\sigma_2^{-(6k-1)}]^s.$$
%
%
%
%\end{proof}

We prove Proposition~\ref{prop:BTtau} from the Introduction. The argument is based on the formal properties shared by $\tau$ and $s$.

\propBTtau*
\begin{proof} Observe that if $\Delta^2\beta'$ is any braid with $\hat\beta'$ a knot then $\widehat{\Delta^2\beta'}$ is also a knot. Now one can check that $\beta'=\sigma_1^{-1}\sigma_2^{-(6k-1)}$ is a knot.

Lemma \ref{tau-bound} implies $$-1=\frac{1}{2}(6k-(6k-1)-1-2)\leq \tau(\hat\beta)\leq \frac{1}{2}(6k-(6k-1)-1+2)=1.$$ Thus $|\tau(\hat\beta)|\leq 1$.  Similarly Lemma \ref{tau-bound} implies that $|s(\hat\beta)|\leq2$. To show that $FDTC(\beta) \geq k-1$, first note that using braid relations we can rewrite 
$$\beta = (\Delta^2)^{k-1} \sigma_{1}\sigma_{2}\sigma_{1}^{2} \sigma_{2}^{-6k+2}$$
For brevity let us write $\sigma_{1}\sigma_{2}\sigma_{1}^{2} \sigma_{2}^{-6k+2} = \alpha$. Note also that by property (c) of Proposition \ref{prop:FDTCprops}, $FDTC(\beta) = k-1 + FDTC(\alpha)$.
Finally, by Proposition~\ref{arcs-FT}, $BT(\alpha) \geq 0$ since it contains copies of $\sigma_{1}$ but no copies of $\sigma_{1}^{-1}$. 
\end{proof}

Thus any bound relating the fractional Dehn twist coefficient $BT(\beta)$ and the slice genus will have to resort to different proof methods than directly using the $\tau$ or the $s$-invariant.

\section{Quasipositive braids and the FDTC bounds}

A naive question would be to ask whether Theorem \ref{thm:ito} holds as stated if we replace the three-genus by the slice genus. 
For instance, slice genus and three-genus are equal for closures of positive braids \cite{KM94}, and so the slice genus version of Theorem \ref{thm:ito} 
immediately holds for positive braid closures.   The next natural class of braids to consider are quasipositive braids: 
braids that can be written as a product of conjugates of the positive Artin generators. 
For quasipositive braids, we will prove the Ito-like bound stated in Theorem~\ref{thm:quasipositiveBTbound}. In this section we assume the braid index $n \geq 3$.

%\begin{proof}
%
%Since $\beta$ is quasipositive, it is a product of pieces of the form $\alpha \sigma_{i} \alpha^{-1}$. Suppose that the number of pieces of this form is $m$. A result of Malyutin (Lemma 13.3 in \cite{Ma04}) is that given any two $\beta_{1}, \beta_{2} \in B_{n}$ and any $i \in \{1, \ldots, n-1\}$, then 
%$$T(\beta_{1} \sigma_{i} \beta_{2}) \leq T(\beta_{1}\beta_{2}) + 1$$
%We can apply this $m$ times to the braid word $\beta$ (deleting the inner $\sigma_{i}$'s) to get that
%$$ BT(\beta) \leq BT(\mathbbm{1}) + m = m$$
%where $\mathbbm{1} \in B_{n}$ is the identity braid. 

%The Slice-Bennequin inequality \cite{Ru93} gives us that
%$$2g_{4}(\widehat\beta) \geq wr(\beta) - n + 1$$
%where $wr(\beta)$ is the writhe of $\beta$, that is, the number of positive crossings in $\beta$ minus the number of negative crossings in $\beta$. Observe that $wr(\beta) = m$. Thus we have
%$$2g_{4}(\widehat\beta) \geq m - n + 1$$
%Putting these together we get that
%$$BT(\beta) \leq m \leq 2g_{4}(\widehat\beta) + n -1$$
%as desired.
%\end{proof}

\begin{definition}
We say that an $n$-braid $\beta \in B_n$ is {\em quasipositive} if it is represented by a braid word of the form 
$$(w_1 \s_{i_1} w_1^{-1})\ (w_2 \s_{i_2} w_2^{-1})\cdots (w_m \s_{i_m}w_m^{-1})$$
where $\s_{i_1},\dots,\s_{i_m} \in \{\s_1,\dots,\s_{n-1}\}$ and $w_i$ are some braid words in $\{\s_1^\pm,\dots,\s_{n-1}^\pm\}$. 
We say that the braid word has {\em quasi-positive-length} (qp-length) $m$. 
\end{definition}

Let us identify $D_n$ with the unit disk in $\mathbb R^2$ equipped with $(x, y)$ coordinates. Place $n$ punctures on the $y$-axis and call them $p_1, \dots, p_n$ so that the $y$-coordinate of $p_i$ is less than that of $p_{i+1}$ (see Figure~\ref{fig:arc-a}). 
Let $\rho_i$ be the subarc of the $y$-axis joining $p_i$ and $p_{i+1}$. 
\begin{figure}[htbp]
\includegraphics*[width=50mm,bb=50 100 320 350]{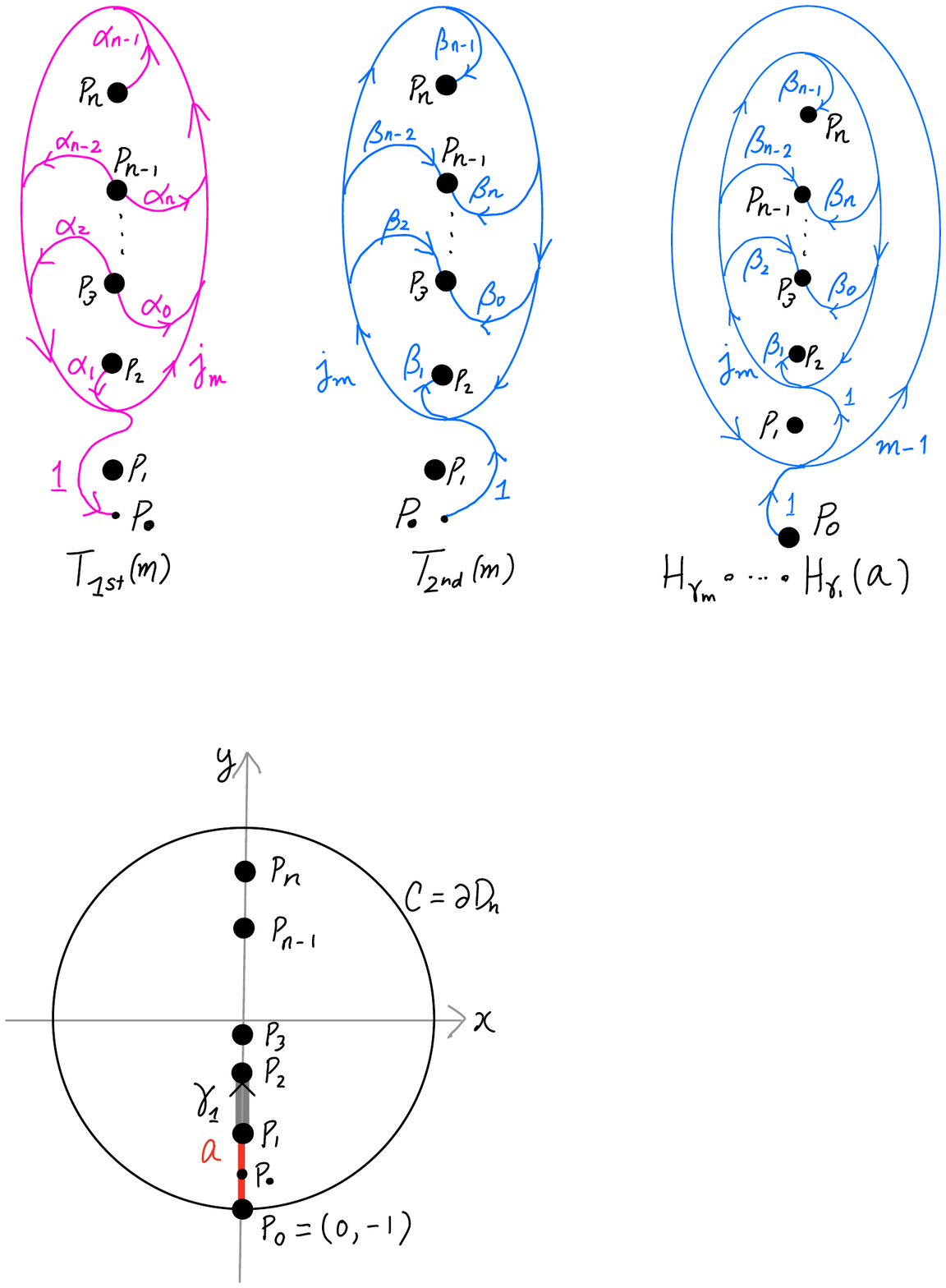}
\caption{Punctured disk $D_n$. Arcs $a$, point $p_\bullet$ and $\gamma_1:=\rho_1$ are used in the proof of Proposition~\ref{prop:m-2}.} 
\label{fig:arc-a}
\end{figure}

For a properly embedded arc $\gamma$ in $D_n$ joining two puncture points, let $H_\gamma \in \MCG(D_n)$ denote the positive half twist along the arc $\gamma$. 
The braid group $B_n$ is isomorphic to the mapping class group $\MCG(D_n)$. 
With this isomorphism $\phi:B_n\to\MCG(D_n)$, a braid word $w \in B_n$ is identified with the mapping class $\phi(w)$ denoted by $\phi_w := \phi(w) \in \MCG(D_n)$. 
Let $$\gamma_{w \s_i w^{-1}} := \phi_w (\rho_i)$$ be a properly embedded arc in $D_n$ that joins two distinct points of $p_1,\dots,p_n$.
Then the braid word $w \s_i w^{-1} \in B_n$ is identified with the positive half twist along the arc $\gamma_{w \s_i w^{-1}}$, thus, $$H_{\gamma_{w \s_i w^{-1}}}=\phi_{w \s_i w^{-1}}.$$ For more on this construction, see for instance \cite{EV15} Example 3.5 or \cite{Hay17} Lemma 2.4. 

\begin{proposition}\label{prop1}
Let $\beta$ be a quasipositive $n$-braid. Let $m\geq 1$. 
The braid $\beta$ has qp-length $m$ if and only if there exist properly embedded arcs $\gamma_1,\dots,\gamma_m$ joining the punctures in $D_n$ as above,
such that $\phi_\beta = H_{\gamma_m} \circ \cdots \circ H_{\gamma_1}.$
\end{proposition}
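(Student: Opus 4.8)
The plan is to deduce the proposition directly from the dictionary, set up in the paragraph just before the statement, between conjugates of the Artin generators and positive half-twists along properly embedded arcs joining punctures. The ingredients I will use are: $\phi\colon B_n\to\MCG(D_n)$ is a group isomorphism with $\phi_{\sigma_i}=H_{\rho_i}$; half-twists are natural, i.e. $\psi\, H_\gamma\, \psi^{-1}=H_{\psi(\gamma)}$ for every mapping class $\psi$ and every arc $\gamma$; and hence $\phi_{w\sigma_i w^{-1}}=H_{\phi_w(\rho_i)}$, where $\phi_w(\rho_i)$ is again a properly embedded arc joining two punctures, being the image of such an arc under a self-homeomorphism of $D_n$.

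For the ``only if'' direction I would argue as follows. Given a quasipositive word $\beta=(w_1\sigma_{i_1}w_1^{-1})\cdots(w_m\sigma_{i_m}w_m^{-1})$ of qp-length $m$, the homomorphism property of $\phi$ expresses $\phi_\beta$ as the composition of the $m$ mapping classes $\phi_{w_j\sigma_{i_j}w_j^{-1}}=H_{\phi_{w_j}(\rho_{i_j})}$; relabelling the arcs $\phi_{w_j}(\rho_{i_j})$ as $\gamma_1,\dots,\gamma_m$ in the order in which the corresponding factors get composed yields $\phi_\beta=H_{\gamma_m}\circ\cdots\circ H_{\gamma_1}$, with each $\gamma_j$ a properly embedded arc joining two punctures. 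This is essentially the construction recalled from \cite[Example~3.5]{EV15} and \cite[Lemma~2.4]{Hay17}, so this direction is immediate.

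For the ``if'' direction I would start from arcs $\gamma_1,\dots,\gamma_m$ joining punctures with $\phi_\beta=H_{\gamma_m}\circ\cdots\circ H_{\gamma_1}$, and invoke the following change-of-coordinates fact: every properly embedded arc in $D_n$ joining two punctures is isotopic to $\phi_w(\rho_1)$ for some $w\in B_n$ (equivalently, the half-twist $H_\gamma$ along any such arc is conjugate in $\MCG(D_n)=B_n$ to $\sigma_1$). Granting this, I would write $\gamma_j=\phi_{w_j}(\rho_1)$, so that $H_{\gamma_j}=\phi_{w_j}H_{\rho_1}\phi_{w_j}^{-1}=\phi_{w_j\sigma_1 w_j^{-1}}$; then $\phi_\beta=\phi_{(w_m\sigma_1 w_m^{-1})\cdots(w_1\sigma_1 w_1^{-1})}$, and injectivity of $\phi$ gives a quasipositive expression for $\beta$ of qp-length $m$. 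One could instead induct on $m$, peeling off $H_{\gamma_m}$ using the $m=1$ case; either way the whole direction reduces to the change-of-coordinates fact.

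The main obstacle, and the only step requiring real care, is establishing that change-of-coordinates fact inside $\MCG(D_n)$, where the homeomorphisms must fix $\partial D_n$ pointwise (permutations of the punctures being allowed). The plan there is the standard cut-and-reglue argument: take a closed regular neighborhood $N(\gamma)$, a sub-disk of $D_n$ containing exactly the two endpoint punctures of $\gamma$ with $\partial N(\gamma)$ essential, and likewise $N(\rho_1)$; observe that the complements $D_n\setminus N(\gamma)$ and $D_n\setminus N(\rho_1)$ are both annuli carrying $n-2$ punctures with outer boundary $\partial D_n$; build a homeomorphism between them fixing $\partial D_n$ pointwise and matching the punctures (for instance by induction on $n$); and extend it across the neighborhoods to a homeomorphism $\Psi$ of $D_n$ fixing $\partial D_n$ with $\Psi(\rho_1)=\gamma$, whence $w:=\phi^{-1}([\Psi])$ works. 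Alternatively one can simply cite this as an instance of the change-of-coordinates principle. The residual bookkeeping, namely the signs of the half-twists and the exact composition order, is routine.
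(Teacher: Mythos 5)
Your proof is correct and takes essentially the same route the paper intends: the paper states Proposition~\ref{prop1} without a written proof, treating it as an immediate consequence of the preceding identification $H_{\gamma_{w\sigma_i w^{-1}}}=\phi_{w\sigma_i w^{-1}}$ together with the cited references, and your ``only if'' direction is exactly that identification applied factor by factor. Your ``if'' direction supplies the one ingredient the paper leaves implicit --- the change-of-coordinates fact that every properly embedded arc joining two punctures is $\phi_w(\rho_1)$ for some $w\in B_n$, so that its positive half-twist equals $\phi_{w\sigma_1 w^{-1}}$ --- and your cut-and-reglue justification of that fact is the standard, correct one.
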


With Proposition~\ref{prop1} we may identify the braid $\beta \in B_n$ and the mapping class $\phi_\beta = H_{\gamma_m} \circ \cdots \circ H_{\gamma_1} \in \MCG(D_n)$ and we have $$BT(\beta) = FT(H_{\gamma_m} \circ \cdots \circ H_{\gamma_1}).$$

%To prove Proposition \ref{prop:m-2} we use the following lemma. 
Here, we recall a useful lemma. 
The lemma applies for the FDTC with respect to any fixed boundary component $C$ of a surface $S$ ($S$ may have 
one or several boundary components).

\begin{lemma}[\cite{IK17}]
\label{lemma:fracDehn}
Let $\phi \in  \MCG(S)$. Fix a boundary component $C$ of the surface $S$ and let  $FT(\phi)$ stand
for the FTDC with respect to  $C$.   Let $T_C$ denote the positive Dehn twist about $C$.

If there exists an essential arc $\gamma \subset S$ that starts on $C$ and satisfies 
$T_{C}^{m}(\gamma) \preceq \phi(\gamma) \preceq T_{C}^{M}(\gamma)$ 
for some $m, M \in \ZZ$ then 
$$
m \leq FT(\phi) \leq M.
$$
\end{lemma}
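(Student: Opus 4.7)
The plan is to deduce both inequalities directly from Proposition~\ref{arcs-FT}(1), by composing $\phi$ with an appropriate power of $T_C^{-1}$ and then applying the full-twist formula of Proposition~\ref{prop:FDTCprops}(c).

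First, for the lower bound $FT(\phi)\geq m$, set $\psi := T_C^{-m} \circ \phi$. The ``right of'' partial order $\preceq$ on isotopy classes of essential arcs based at a fixed point of $C$ is determined, after minimizing intersections, by the orientation of the tangent vectors at the basepoint; since $T_C^{-m}$ is an orientation-preserving boundary-fixing homeomorphism, it preserves this partial order. Applying $T_C^{-m}$ to both sides of the hypothesis $T_C^m(\gamma) \preceq \phi(\gamma)$ therefore gives
$$\gamma \;=\; T_C^{-m}\bigl(T_C^m(\gamma)\bigr) \;\preceq\; T_C^{-m}\bigl(\phi(\gamma)\bigr) \;=\; \psi(\gamma),$$
so $\psi(\gamma)$ lies to the right of $\gamma$. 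Proposition~\ref{arcs-FT}(1) then yields $FT(\psi)\geq 0$, and iterating Proposition~\ref{prop:FDTCprops}(c) gives $FT(\psi) = FT(\phi) - m$. Hence $FT(\phi)\geq m$.

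The upper bound $FT(\phi)\leq M$ is proved symmetrically. Setting $\psi' := T_C^{-M} \circ \phi$ and applying $T_C^{-M}$ to $\phi(\gamma) \preceq T_C^M(\gamma)$ yields $\psi'(\gamma) \preceq \gamma$, i.e., $\psi'(\gamma)$ lies to the left of $\gamma$. Proposition~\ref{arcs-FT}(1) then gives $FT(\psi')\leq 0$, and the full-twist formula again yields $FT(\phi)\leq M$.

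I do not expect any serious obstacle; the proof is essentially a reduction to already-established properties of $FT$. The only bookkeeping points are (a) confirming that the partial order $\preceq$ on essential arcs based at a fixed point of $C$ is preserved by the orientation-preserving boundary-fixing map $T_C^{-m}$, which reduces to an orientation check on tangent vectors at the basepoint of $C$, and (b) observing that Proposition~\ref{arcs-FT}(1), stated in the excerpt for surfaces with connected boundary, extends to the multi-component setting when $FT$ is interpreted as the FDTC at the boundary component $C$ containing the basepoint of $\gamma$. This extension is immediate since both the right-veering/left-veering conditions and the FDTC are local in a collar neighborhood of $C$.
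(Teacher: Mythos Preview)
The paper does not give its own proof of this lemma; it is quoted from \cite{IK17} and used as a black box. Your argument is correct and is essentially the standard way to deduce the statement from the two ingredients already recorded in the paper, namely Proposition~\ref{arcs-FT}(1) and the full-twist formula Proposition~\ref{prop:FDTCprops}(c). The two bookkeeping points you flag are the right ones: order-preservation under an orientation-preserving boundary-fixing homeomorphism is exactly the tangent-vector check you describe, and the extension of Proposition~\ref{arcs-FT}(1) to a chosen boundary component $C$ is immediate since both the arc ordering at $C$ and the FDTC at $C$ depend only on the restriction of the monodromy to a collar of $C$.
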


In the above lemma, the symbol $\prec$ represents an ordering on the set of properly embedded arcs, see \cite[Definition 3.2]{IK19}.
We write $\alpha\prec\beta$ if arcs $\alpha$ and $\beta$ start at the same boundary point, realize the geometric intersection number, 
and $\beta$ lies on the right side of $\alpha$ in a small neighborhood of the starting point. (The notion ``$\beta$ to the right of $\alpha$'' was already discussed in Section~\ref{sec:FDTC}. Here, we write $\preceq$ to allow for the case $\alpha=\beta$.)

\begin{lemma} \label{lem:FTbound}
Let the braid index $n\geq 3$. 
Let $m\geq 1$. 
Let $\gamma_1,\dots,\gamma_m$ be properly embedded arcs in $D_n$ connecting two distinct  punctures. Then we have $FT(H_{\gamma_1})=0$
and $$0\leq FT(H_{\gamma_m} \circ \cdots \circ H_{\gamma_1}) \leq m-1.$$ 
\end{lemma}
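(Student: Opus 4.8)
The plan is to reduce the whole statement to two ingredients: that a single positive half twist along an arc joining two distinct punctures has vanishing fractional Dehn twist coefficient once $n\ge 3$, and that the quasimorphism defect of $FT$ accumulates in a controlled way over a product of $m$ factors.

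First I would record that $FT(H_\gamma)=0$ for \emph{any} properly embedded arc $\gamma$ joining two distinct punctures of $D_n$. Since $n\ge 3$, some puncture $p$ is not an endpoint of $\gamma$; as $D_n\setminus\gamma$ is connected, choose a properly embedded arc $\delta$ from $\partial D_n$ to $p$ disjoint from $\gamma$. The half twist $H_\gamma$ is supported in a regular neighborhood of $\gamma$, which we may take disjoint from $\delta$, so $H_\gamma(\delta)=\delta$. Then Lemma~\ref{lemma:fracDehn}, applied to the essential arc $\delta$ with $m=M=0$ (using $H_\gamma(\delta)=\delta$), gives $0\le FT(H_\gamma)\le 0$. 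This settles $FT(H_{\gamma_1})=0$, and the same argument gives $FT(H_{\gamma_i})=0$ for every $i=1,\dots,m$.

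For the lower bound $FT(\phi_\beta)\ge 0$, where $\phi_\beta=H_{\gamma_m}\circ\cdots\circ H_{\gamma_1}$, I would argue that $\phi_\beta$ is right-veering: each $H_{\gamma_i}$ is a positive half twist, hence right-veering (an arc disjoint from the twisting region is fixed, and an arc meeting it is carried to the right), and right-veering mapping classes are closed under composition \cite{HKM07}. Thus $\phi_\beta(\alpha)$ lies weakly to the right of $\alpha$ for every essential arc $\alpha$ starting on $\partial D_n$, so Proposition~\ref{arcs-FT}(1) gives $FT(\phi_\beta)\ge 0$.

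For the upper bound I would iterate the quasimorphism inequality of Proposition~\ref{prop:FDTCprops}(a) over the $m$ factors $H_{\gamma_1},\dots,H_{\gamma_m}$: telescoping yields $\bigl|FT(\phi_\beta)-\sum_{i=1}^m FT(H_{\gamma_i})\bigr|\le m-1$, and since each $FT(H_{\gamma_i})=0$ by the first step this reads $|FT(\phi_\beta)|\le m-1$. Combined with the lower bound, $0\le FT(\phi_\beta)\le m-1$, as desired. The computation is essentially formal; the one point that genuinely needs care is the hypothesis $n\ge 3$ (for $n=2$ one has $H_{\rho_1}=\sigma_1$ with $FT=\tfrac12$, and both $FT(H_{\gamma_1})=0$ and the bound fail), together with the mild point that arcs running from $\partial D_n$ to a puncture qualify as essential arcs in Lemma~\ref{lemma:fracDehn} and Proposition~\ref{arcs-FT}.
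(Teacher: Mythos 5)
Your proof is correct, and the first step and the upper bound are essentially the paper's argument: find an essential arc from $\partial D_n$ to a third puncture that is fixed by $H_{\gamma_1}$ (possible precisely because $n\ge 3$), apply Lemma~\ref{lemma:fracDehn} to get $FT(H_{\gamma_i})=0$, and telescope the quasimorphism inequality to get $|FT(\phi_\beta)|\le m-1$. Where you differ is the lower bound: the paper attributes $0\le FT(\phi_\beta)$ to ``the quasimorphism property and induction,'' but that alone only yields $FT(\phi_\beta)\ge -(m-1)$, since composing a map with $FT\ge 0$ with one of $FT=0$ a priori only gives $FT\ge -1$ at each step. Your supplementary argument --- each $H_{\gamma_i}$ is a positive half twist, hence right-veering, the right-veering maps form a monoid by \cite{HKM07}, and Proposition~\ref{arcs-FT}(1) then gives $FT(\phi_\beta)\ge 0$ --- is exactly the missing input, and it is consistent with how the paper itself argues elsewhere (e.g.\ the lower bound in Corollary~\ref{corA}). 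So your write-up is not just correct but actually more complete than the paper's on this point.
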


\begin{proof} 
Since $n \geq 3$ there exists some essential arc that is fixed by the half twist $H_{\gamma_1}$. By Lemma~\ref{lemma:fracDehn} this means that $FT(H_{\gamma_1})=0$. 
(Note that if $n=2$ then $FT(H_{\gamma_1})=\frac{1}{2}$.) 
By the quasimorphism property of the FDTC in Proposition \ref{prop:FDTCprops} and induction on $m$, we obtain $0\leq FT(H_{\gamma_m} \circ \cdots \circ H_{\gamma_1}) \leq m-1.$
\end{proof}

Let $K$ be a link in $S^3 = \partial B^4$. Let $\chi_4(K)$ denote the maximal Euler characteristic of an oriented surface that is smoothly embedded in the 4-ball $B^4$ and  bounded by $K$.

\begin{lemma}[\cite{Ru93}] \label{chi-sharp}
Assume that $\beta \in B_n$ is a quasipositive braid of qp-length $m$. 
Then $\chi_4(\hat\beta) = n-m$. 
\end{lemma}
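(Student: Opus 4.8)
The plan is to prove the two inequalities separately. For the upper bound $\chi_4(\hat\beta) \le n-m$, I would exhibit an explicit ribbon surface in $B^4$ realizing this Euler characteristic. Writing $\beta = (w_1\sigma_{i_1}w_1^{-1})\cdots(w_m\sigma_{i_m}w_m^{-1})$, push the braid closure into $B^4$ and build a surface from $n$ disks (one capping each braid strand, pushed into the ball) together with $m$ bands, one for each conjugated generator $w_j\sigma_{i_j}w_j^{-1}$: the $j$-th band is attached along the embedded arc $\gamma_j = \phi_{w_j}(\rho_{i_j})$ from Proposition~\ref{prop1}, which joins two of the punctures. Each band attachment drops the Euler characteristic by $1$, so the resulting surface $F$ has $\chi(F) = n - m$ and $\partial F = \hat\beta$, giving $\chi_4(\hat\beta) \ge n-m$. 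Wait — I should be careful about orientation: one must check that $F$ is orientable and that its oriented boundary is $\hat\beta$ with the braid orientation; this is standard for quasipositive bands (each band respects the braid orientation since it realizes a \emph{positive} half-twist), and the surface is in fact a ribbon surface. So this direction gives $\chi_4(\hat\beta) \ge n-m$, and since $\hat\beta$ is a knot and $\chi_4 = 1 - 2g_4$ here, this is the ``easy'' half.

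For the reverse inequality $\chi_4(\hat\beta) \le n-m$, this is the content of Rudolph's slice-Bennequin inequality via the adjunction/Thom conjecture (or, more elementarily for this case, via Kronheimer--Mrowka / the $\tau$- or $s$-invariant bounds recorded in Lemma~\ref{tau-bound}). Concretely: a quasipositive braid $\beta$ of qp-length $m$ can be written as a product of $m$ positive band generators, and by the Bennequin-type inequality the writhe/quasipositive data controls $\chi_4$ from below, forcing $\chi_4(\hat\beta) = 1 - s(\hat\beta) = \cdots = n - m$. Since the statement is explicitly attributed to \cite{Ru93}, I would cite Rudolph's theorem directly for this inequality rather than reprove it; combined with the explicit surface above, the two bounds match and we conclude $\chi_4(\hat\beta) = n-m$.

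The main obstacle is the lower bound for $g_4$ (equivalently the upper bound for $\chi_4$): the explicit ribbon surface construction is routine bookkeeping, but showing that \emph{no} surface does better genuinely requires gauge-theoretic or Floer-theoretic input (the adjunction inequality, or Rasmussen's $s$ / Ozsv\'ath--Szab\'o's $\tau$ applied as in Lemma~\ref{tau-bound}). I would lean on Lemma~\ref{tau-bound} or on \cite{Ru93} directly for that step. A minor technical point to get right is the orientability of the band surface and the claim that each band lowers $\chi$ by exactly one (no band is attached so as to create a non-orientable or disconnected anomaly), which follows because each $\gamma_j$ connects two \emph{distinct} punctures and the half-twists are positive.
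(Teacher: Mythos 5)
Your outline is correct, and it matches how the paper handles this statement: the paper gives no proof at all, citing \cite{Ru93}, and your argument is exactly Rudolph's. The easy half is the quasipositive Bennequin surface ($n$ disks joined by $m$ positively half-twisted bands along the arcs $\gamma_j$, hence a ribbon surface of Euler characteristic $n-m$, giving $\chi_4(\hat\beta)\ge n-m$), and the hard half is the slice--Bennequin inequality, which genuinely needs gauge-theoretic input (Kronheimer--Mrowka, or the $s$/$\tau$ bounds of Lemma~\ref{tau-bound}, which for a qp-length-$m$ word give $\tau(\hat\beta)\ge\frac12(m-n+1)$ and hence $\chi_4\le n-m$ when $\hat\beta$ is a knot). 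Two small points: your opening sentence labels the surface construction as proving the \emph{upper} bound on $\chi_4$ before you correct the direction later in the paragraph — the explicit surface of course only bounds $\chi_4$ from \emph{below}, since $\chi_4$ is a maximum; and the lemma as stated allows $\hat\beta$ to be a link, for which the $\tau$/$s$ shortcut as written applies only to knots, so deferring the general case to \cite{Ru93} (as you do) is the right call.
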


With the above two lemmas, we prove the following theorem. 
\begin{theorem}[cf. Theorem~\ref{thm:quasipositiveBTbound}]\label{thm:estimate}
Let $\beta\in B_n$ be a quasipositive $n$-braid of qp-length $m$. 
Then $$BT(\beta) \leq m-1= n- \chi_4(\hat\beta) -1.$$ 
In particular, when the braid closure $\hat\beta$ is a knot then $$BT(\beta)\leq 2g_4(\hat\beta)+(n-2).$$
\end{theorem}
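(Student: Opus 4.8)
The plan is to assemble the three ingredients already in place: the arc-decomposition of a quasipositive braid (Proposition~\ref{prop1}), the FDTC estimate for a composition of half twists (Lemma~\ref{lem:FTbound}), and Rudolph's slice-Euler-characteristic formula (Lemma~\ref{chi-sharp}).

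First I would invoke Proposition~\ref{prop1}: since $\beta$ is quasipositive of qp-length $m$, there are properly embedded arcs $\gamma_1,\dots,\gamma_m$ in $D_n$, each joining two punctures, with
$$
\phi_\beta \;=\; H_{\gamma_m}\circ\cdots\circ H_{\gamma_1}
$$
in $\MCG(D_n)$. By the remark following Proposition~\ref{prop1}, $BT(\beta)=FT(H_{\gamma_m}\circ\cdots\circ H_{\gamma_1})$. Then, because the braid index satisfies $n\geq 3$, I would apply Lemma~\ref{lem:FTbound} directly to conclude
$$
BT(\beta)\;=\;FT(H_{\gamma_m}\circ\cdots\circ H_{\gamma_1})\;\leq\; m-1.
$$

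Next I would translate $m-1$ into the topological quantity. By Lemma~\ref{chi-sharp}, $\chi_4(\hat\beta)=n-m$, hence $m = n - \chi_4(\hat\beta)$ and therefore $BT(\beta)\leq m-1 = n-\chi_4(\hat\beta)-1$, which is the first displayed inequality. For the final statement, suppose $\hat\beta$ is a knot. Then any smoothly embedded oriented surface in $B^4$ bounded by $\hat\beta$ has a single boundary circle, so a genus-$g$ such surface has Euler characteristic $1-2g$; taking the maximum over all such surfaces gives $\chi_4(\hat\beta) = 1 - 2g_4(\hat\beta)$. Substituting,
$$
BT(\beta)\;\leq\; n-\chi_4(\hat\beta)-1 \;=\; n-\bigl(1-2g_4(\hat\beta)\bigr)-1 \;=\; 2g_4(\hat\beta)+(n-2),
$$
as claimed.

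The argument is essentially bookkeeping once the lemmas are granted; there is no genuinely new obstacle at this stage. If I had to point to the one place where care is needed, it is making sure the hypotheses of Lemma~\ref{lem:FTbound} are met — in particular that $n\geq 3$ is what rules out the anomalous value $FT(H_{\gamma_1})=\tfrac12$ that occurs for $n=2$, and that the qp-length decomposition from Proposition~\ref{prop1} really does express $\phi_\beta$ as a left-to-right composition of \emph{exactly} $m$ half twists so that the inductive quasimorphism bound yields $m-1$ rather than something larger. Everything else is immediate.
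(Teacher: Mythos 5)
Your proof is correct and follows exactly the paper's route: Proposition~\ref{prop1} to write $\phi_\beta$ as a composition of $m$ half twists, Lemma~\ref{lem:FTbound} for the bound $BT(\beta)\leq m-1$, and Lemma~\ref{chi-sharp} together with $\chi_4(\hat\beta)=1-2g_4(\hat\beta)$ for a knot to obtain the final inequality. The extra care you note about $n\geq 3$ is exactly the hypothesis the paper imposes for the same reason.
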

\begin{proof}
By Lemma~\ref{lem:FTbound}, we have $BT(\beta) \leq m-1$. By Lemma~\ref{chi-sharp}, we have $m - 1 = n - \chi_4(\hat\beta) - 1$. 
\end{proof}

The next proposition shows that the 
 upper bound of  $m-1$ for $BT(\beta)$ is the best possible, as it guarantees the existence of a quasipositive braid $\beta$ of quasi-positive length $m$ with 
$BT=m-1$.  

\begin{proposition}\label{prop:m-2}
For every $m=2, 3, 4, \cdots$ there exist properly embedded arcs $\gamma_1, \cdots, \gamma_m$ in $D_n$ joining distinct puncture points such that
$$FT(H_{\gamma_m} \circ \cdots \circ H_{\gamma_1}) = m-1.$$
\end{proposition}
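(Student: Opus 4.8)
The plan is to combine the a priori upper bound already recorded in Lemma~\ref{lem:FTbound} with an explicit family of arcs that realizes it. Since Lemma~\ref{lem:FTbound} gives $FT(H_{\gamma_m}\circ\cdots\circ H_{\gamma_1})\le m-1$ for \emph{any} collection of arcs joining distinct punctures of $D_n$ with $n\ge 3$, the entire content of the proposition is to produce, for each $m$, a suitable (in general growing) number of punctures $n$ and arcs $\gamma_1,\dots,\gamma_m$ for which the reverse inequality $FT(H_{\gamma_m}\circ\cdots\circ H_{\gamma_1})\ge m-1$ holds; equality is then automatic.

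First I would fix the number of punctures (it is natural to let each successive half-twist sweep in one more puncture, so something like $n=m+1$), set $\gamma_1:=\rho_1$ so that $H_{\gamma_1}=\sigma_1$, and fix the essential properly embedded arc $a$ of Figure~\ref{fig:arc-a}, which starts on $\partial D_n$ near the point $p_\bullet$ and reaches in towards $p_1$. I would then build $\gamma_2,\dots,\gamma_m$ iteratively: writing $\phi_k:=H_{\gamma_k}\circ\cdots\circ H_{\gamma_1}$, choose $\gamma_k$ to be an arc joining two of the punctures so that the half-twist $H_{\gamma_k}$ drags the already spiralling arc $\phi_{k-1}(a)$ exactly one additional full turn to the right around the bundle of punctures it encircles; pictorially this forces $\phi_k(a)$ to wind $k-1$ full times before it reaches $p_1$. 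Concretely one can take $\gamma_k=\phi_{k-1}(\rho')$ for a suitable $y$-axis subarc $\rho'$ (equivalently $H_{\gamma_k}=\phi_{k-1}\,\sigma_{i_k}\,\phi_{k-1}^{-1}$), which keeps $\gamma_k$ a bona fide half-twist arc joining two punctures.

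With the construction in hand, the lower bound comes from Lemma~\ref{lemma:fracDehn}. Taking $\gamma=a$, I would verify $T_C^{\,m-1}(a)\preceq \phi_m(a)\preceq T_C^{\,M}(a)$, where $C=\partial D_n$, $T_C$ is the boundary Dehn twist, $M$ is any sufficiently large integer, and $\preceq$ is the ordering on properly embedded arcs used in Lemma~\ref{lemma:fracDehn}. That lemma then gives $FT(\phi_m)\ge m-1$, and combined with Lemma~\ref{lem:FTbound} we conclude $FT(H_{\gamma_m}\circ\cdots\circ H_{\gamma_1})=FT(\phi_m)=m-1$, as desired. The comparison $T_C^{\,m-1}(a)\preceq\phi_m(a)$ I would prove by induction on $m$: assuming $\phi_{k-1}(a)$ winds $k-2$ times, one checks that the chosen $\gamma_k$ adds precisely one more unit of right-winding, being careful to put $a$ and $\phi_m(a)$ in minimal position (eliminate all bigons) before invoking $\preceq$.

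The main obstacle is exactly this last step: choosing the arcs $\gamma_k$ so that each half-twist contributes a \emph{full} unit of right-twisting about the boundary — not a fraction, and not getting partially cancelled against earlier twisting — and then confirming this via a clean, bigon-free picture of $\phi_m(a)$ so that the hypothesis of Lemma~\ref{lemma:fracDehn} holds on the nose rather than with a smaller exponent. This is a careful picture-chasing argument that tracks the geometric intersection numbers of $a$ with the arcs $\gamma_k$ under iterated half-twists; by contrast, the surrounding logic (the upper bound from Lemma~\ref{lem:FTbound} together with Lemma~\ref{lemma:fracDehn}) is routine once the construction is correctly set up.
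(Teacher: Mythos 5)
Your overall scaffolding is the same as the paper's: the upper bound $FT(H_{\gamma_m}\circ\cdots\circ H_{\gamma_1})\le m-1$ comes for free from Lemma~\ref{lem:FTbound}, and the lower bound is to be obtained by exhibiting a boundary-based arc $a$ with $T_C^{\,m-1}(a)\preceq H_{\gamma_m}\circ\cdots\circ H_{\gamma_1}(a)$ and invoking Lemma~\ref{lemma:fracDehn}. The problem is that the one piece of actual content in the proposition --- the construction of the arcs $\gamma_k$ --- is exactly the piece you have either gotten wrong or deferred. Your concrete recipe $\gamma_k=\phi_{k-1}(\rho')$, i.e.\ $H_{\gamma_k}=\phi_{k-1}\,\sigma_{i_k}\,\phi_{k-1}^{-1}$, telescopes: $\phi_k=H_{\gamma_k}\circ\phi_{k-1}=\phi_{k-1}\,\sigma_{i_k}$, so $\phi_m$ is just a positive Artin word $\sigma_{i_1}\sigma_{i_2}\cdots\sigma_{i_m}$ of length $m$. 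Such braids cannot have $FT=m-1$: with your choice $n=m+1$ and consecutive generators this is $\sigma_1\sigma_2\cdots\sigma_{n-1}$, whose $n$-th power is $\Delta^2$ and whose FDTC is therefore $1/n$, not $m-1$; already for $m=2$ and any $n\ge 3$ one checks directly that every length-two positive word ($\sigma_i^2$, $\sigma_i\sigma_j$ with $|i-j|\ge 2$, or $\sigma_i\sigma_{i+1}$) has FDTC $0$ or $1/3$, never $1$. So the "main obstacle" you flag at the end is not a verification detail; it is the theorem, and the construction you propose provably fails to clear it.

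What the paper actually does is choose each $\gamma_k$ to be a long arc, specified by train-track templates, that spirals roughly $(k-1)/(2n-4)$ times around \emph{all} the punctures before terminating; in $H_{\gamma_k}=w_k\sigma_{i_k}w_k^{-1}$ the conjugating words $w_k$ grow with $k$ and are \emph{not} the partial products $\phi_{k-1}$, so no telescoping occurs and the composition is a genuinely quasipositive (not positive) word. The inductive mechanism is that the first half of $\gamma_{k+1}$ retraces, with reversed orientation, the terminal portion of $\phi_k(a)$, so that $H_{\gamma_{k+1}}$ "erases" that portion and "overwrites" it with one additional full unit of winding about $\partial D_n$; this is what makes each half-twist contribute a full $+1$ to the boundary winding, and it is verified by tracking weighted train tracks rather than by ad hoc bigon removal. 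A secondary point: the paper's construction works for every \emph{fixed} $n\ge 3$ and all $m$ (the arcs just wind more as $m$ grows), whereas you tie $n$ to $m$; that happens to suffice for Corollary~\ref{slice-BT} (which needs $m=n-1$) but proves less than the proposition as used elsewhere.
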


In the proof of Proposition~\ref{prop:m-2} we use oriented train tracks. 
A train track is a graph.  The edges are oriented and weighted. 
Each vertex has valence 3 or 4 where incoming edges and outgoing edges tangentially meet.  
As shown in the left picture in Figure~\ref{fig:tt-basic}, at the vertex two outgoing edges (with weights $b$ and $c$) are tangent to the incoming edge (with weight $a$) and the weights satisfy the equation $a=b+c$. (The same rule applies for a valence 4 vertex.) 
Replacing each edge of weight $a$ with $a$ parallel edges we obtain arcs. The right picture of Figure~\ref{fig:tt-basic} shows arcs 
carried by the train track with weights $a=6, b=4$ and $c=2$. 
Orientations of the edges in the train track induce orientations of the carried arcs. 

\begin{figure}[htbp]
\includegraphics*[width=80mm,bb=50 340 450 490]{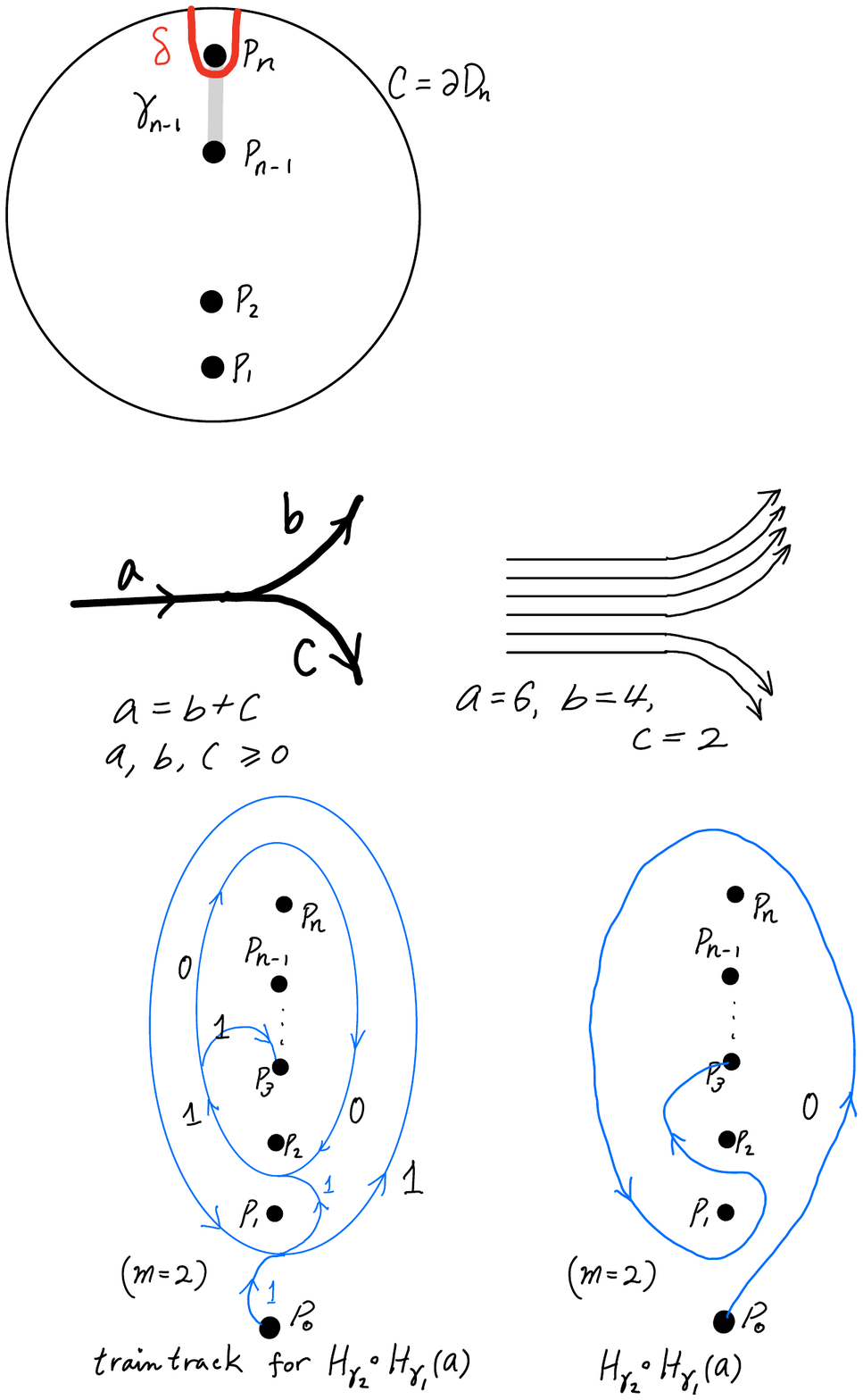}
\caption{} 
\label{fig:tt-basic}
\end{figure} 

\begin{proof}[Proof of Proposition~\ref{prop:m-2}]
Let $p_1,\cdots, p_n$ be the punctures of the disk $D_n$. 
Let $C=\partial D_n$ denote the boundary of the disk. 
Let $a$ be an arc connecting $p_1$ to $C$ as in Figure~\ref{fig:arc-a}. Let $\gamma_1:=\rho_1$,
the line segment of the $y$-axis joining $p_1$ and $p_2$. We orient $\gamma_1$ upward (from $p_1$ to $p_2$).

To define a sequence of arcs $\{\gamma_m| m=2, 3, \cdots\}$ we use two oriented train track templates $T_{\tt 1st}(m)$ and $T_{\tt 2nd}(m)$ depicted in Figure~\ref{fig:templates}. 
\begin{figure}[htbp]
\includegraphics*[width=130mm,bb=50 400 550 700]{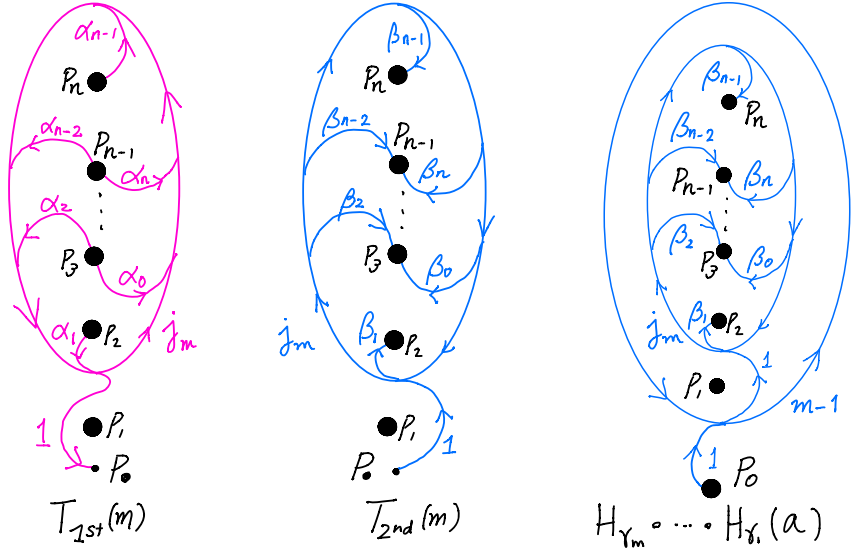}
\caption{Train tracks $T_{\tt 1st}(m)$ and $T_{\tt 2nd}(m)$ for $\gamma_m$, and a train track carrying the arc $H_{\gamma_m}\circ\cdots\circ H_{\gamma_1}(a)$.} 
\label{fig:templates}
\end{figure} 

In the train track $T_{\tt 1st}(m)$, the orientation of edges near punctures are all outward. The bottom edge has weight 1 and ends at a point $p_\bullet$ on the $y$-axis between $p_0$ and $p_1$. Labels $j_m \in \mathbb N$ and $
\alpha_{m,0}, \alpha_{m,1}$,$\dots$,$\alpha_{m,2n-5}$ $\in \{0,1\}$ denote the weights of the edges. 
The edges $\alpha_{m,i}$ are labeled clockwise and $\alpha_{m,0}$ starts from the puncture $p_3$. (The same rule applies for the edges $\beta_{m,i}$ mentioned below.)
For simplicity, the subscript $m$ will be omitted in the following and $\alpha_i = \alpha_{m,i}$.  
We require only one of $\alpha_0$,$\dots$,$\alpha_{2n-5}$ is $1$ and the rest of them are all $0$. The weights of the unlabeled edges can be easily computed. 

The edges of $T_{\tt 2nd}(m)$ are oriented and at each puncture the orientation points into the puncture. 
The weights $\beta_0, \cdots, \beta_{2n-5}$ are either $0$ or $1$ and only one of them is $1$. 
The starting point of the bottom edge is $p_\bullet$.

Let $m=2$. 
The arc $\gamma_2$ is defined to be an arc carried by the train track $T_{\tt 1st}(2)$ first, and then $T_{\tt 2nd}(2)$, where 
$j_2=1$ and $\alpha_1 = \beta_2=1$. 
We see that $\gamma_2$ starts at $p_2$ and ends at $p_3$ as in Figure~\ref{fig:gamma2}. 
\begin{figure}[htbp]
\includegraphics*[width=120mm,bb=20 170 553 720]{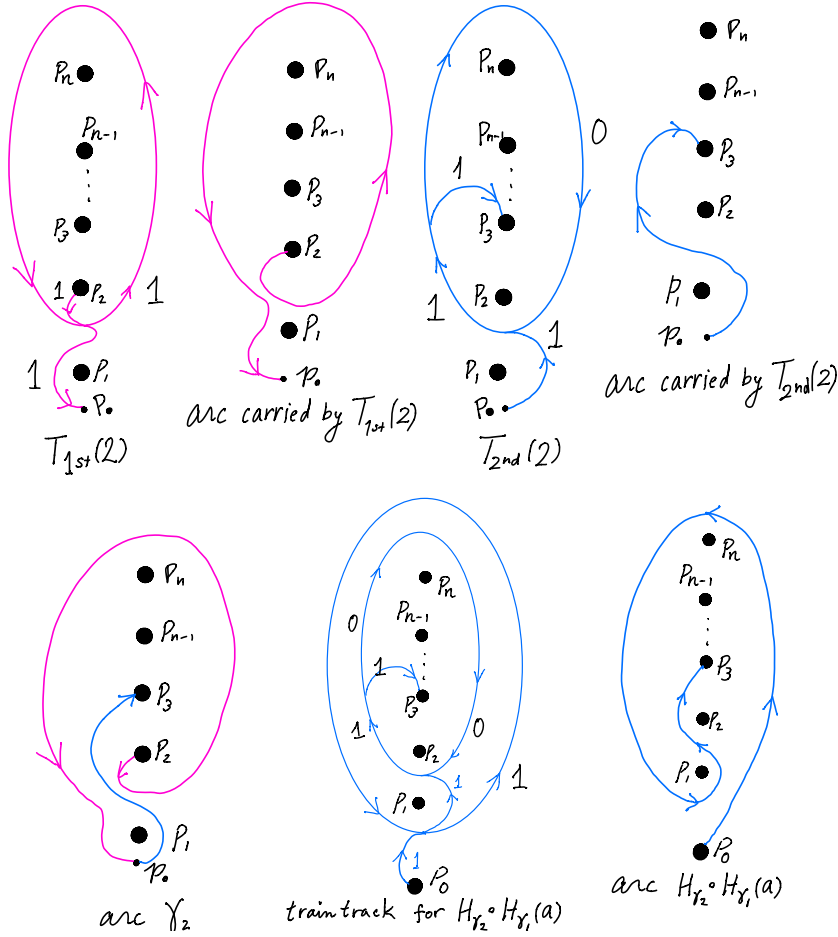}
\caption{($m=2$). The arcs $\gamma_2$ and $H_{\gamma_2}\circ H_{\gamma_1}(a)$ and their train tracks.}
\label{fig:gamma2}
\end{figure} 
The image of the arc $a$ under the diffeomorphism $H_{\gamma_2}\circ H_{\gamma_1}(a)$ is depicted in the bottom right picture in Figure~\ref{fig:gamma2}, which shows that $$T_C(a) \prec H_{\gamma_2}\circ H_{\gamma_1}(a),$$ 
thus $$FT(H_{\gamma_2}\circ H_{\gamma_1})\geq 1.$$

For a general $m$, 
suppose that $m-1 = k(2n-4) + r$ where $k \in \mathbb Z_{\geq 0}$ and $r \in \{0,1,\dots, 2n-5\}$ the remainder.  
The arc $\gamma_m$ is defined to be an arc that is carried by $T_{\tt 1st}(m)$ first and then $T_{\tt 2nd}(m)$ with $\alpha_r = \beta_{r+1}=1$ (here,
the subscript $r+1$ is considered  modulo $2n-4$) and 
$$
j_m= \left\lceil \frac{m-1}{2n-4} \right\rceil (= k \mbox{ or } k+1).
$$

For example, when $m=3$ the arcs $\gamma_3$ and $H_{\gamma_3}\circ H_{\gamma_2}\circ H_{\gamma_1}(a)$ are illustrated in Figure~\ref{fig:m=3}. 
\begin{figure}[htbp]
\includegraphics*[width=120mm,bb=50 120 540 690]{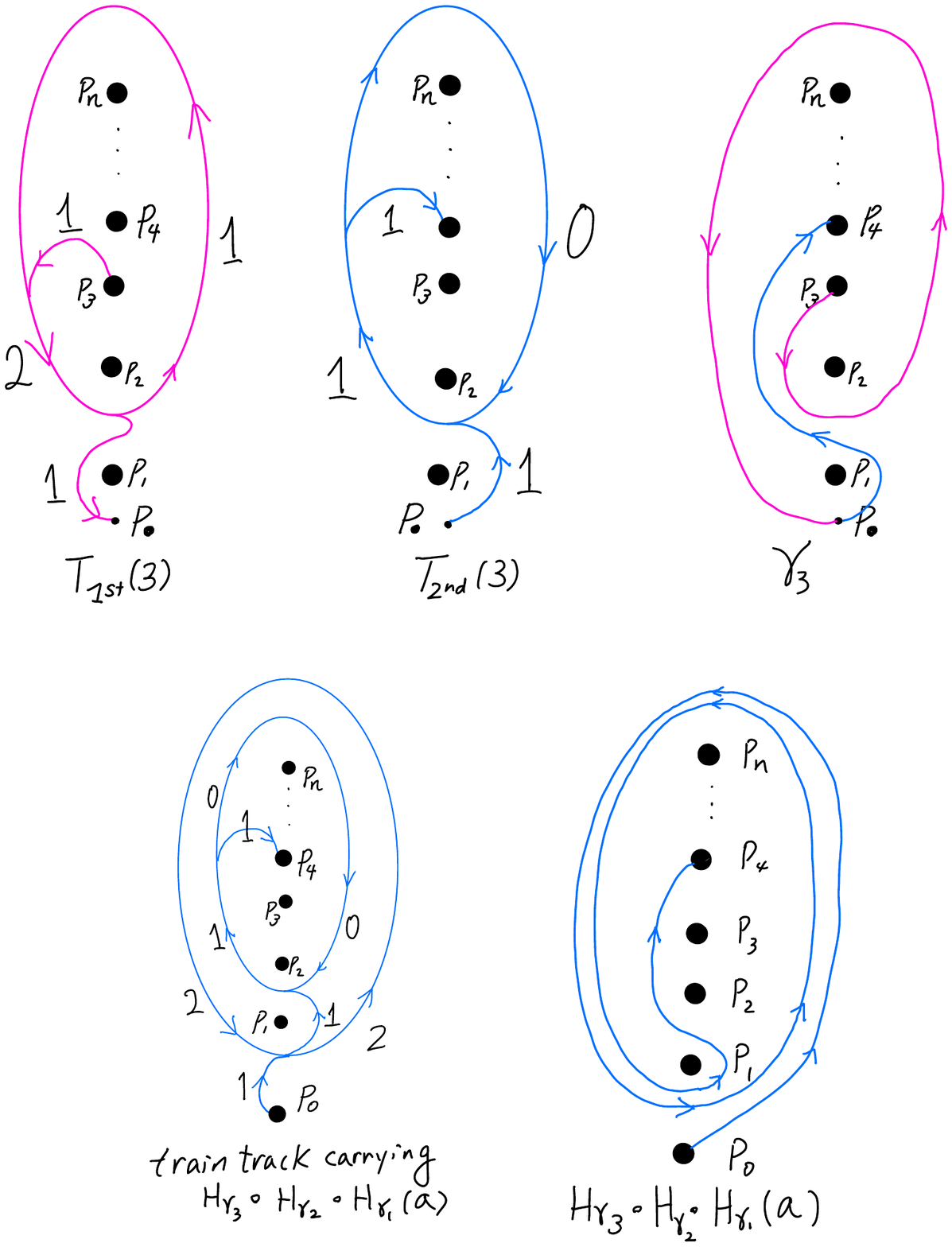}
\caption{($m=3$). Arcs $\gamma_3$ and $H_{\gamma_3}\circ H_{\gamma_2}\circ H_{\gamma_1}(a)$ and their train tracks.} 
\label{fig:m=3}
\end{figure}

See Figure~\ref{fig:templates} again. 
We observe two things: 
(1) Since $\alpha_r = \beta_{r+1}=1$ the templates $T_{\tt 1st}(m+1)$ and $T_{\tt 2nd}(m)$ are exactly the same except that they are oppositely oriented and $T_{\tt 1st}(m+1)$ goes to the left side of $p_1$ whereas $T_{\tt 2nd}(m)$ goes to the right side of $p_1$. 
(2) Observe that $T_{\tt 2nd}(m)$ is embedded in $H_{\gamma_m} \circ\cdots\circ H_{\gamma_1}(a)$. 

With these observations in mind, applying the half-twist $H_{\gamma_{m+1}}$ to the arc $H_{\gamma_m} \circ\cdots\circ H_{\gamma_1}(a)$, 
the embedded part $T_{\tt 2nd}(m)$ is erased by $T_{\tt 1st}(m+1)$ (which is the 1st part of $\gamma_{m+1}$) and then $T_{\tt 2nd}(m+1)$ (which is the 2nd part of $\gamma_{m+1}$) overwrites the erased part of $H_{\gamma_m} \circ\cdots\circ H_{\gamma_1}(a)$. The resulting arc is exactly $H_{\gamma_{m+1}} \circ\cdots\circ H_{\gamma_1}(a)$.

Therefore, by induction on $m$ we see that the image $H_{\gamma_m}\circ\cdots\circ H_{\gamma_1}(a)$ is carried by the train track in the right sketch in Figure~\ref{fig:templates}.

The outer circle in the right picture in Figure~\ref{fig:templates} is weighted $m-1$, which means that 
$$T^{m-1}_C (a) \prec H_{\gamma_m} \circ\cdots\circ H_{\gamma_1} (a).$$
By Theorem~\ref{thm:estimate} and Lemma~\ref{lemma:fracDehn} we obtain $FT(H_{\gamma_m} \circ\cdots\circ H_{\gamma_1} )=m-1$.  
\end{proof}

\begin{corollary} \label{slice-BT} There exist slice knots represented by quasipositive $n$-braids with $BT=n-2$. 
\end{corollary}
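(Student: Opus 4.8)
The plan is to feed $m=n-1$ into Proposition~\ref{prop:m-2}. Fix $n\geq 3$ and set $m=n-1\geq 2$. Proposition~\ref{prop:m-2} supplies properly embedded arcs $\gamma_1,\dots,\gamma_{n-1}$ in $D_n$ joining distinct punctures with $FT(H_{\gamma_{n-1}}\circ\cdots\circ H_{\gamma_1})=m-1=n-2$. By Proposition~\ref{prop1} (and the identification noted just after it), $H_{\gamma_{n-1}}\circ\cdots\circ H_{\gamma_1}=\phi_\beta$ for some quasipositive $n$-braid $\beta$ of qp-length $m=n-1$, and $BT(\beta)=FT(H_{\gamma_{n-1}}\circ\cdots\circ H_{\gamma_1})=n-2$. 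So a quasipositive $n$-braid with the desired fractional Dehn twist coefficient is already in hand; what remains is to identify its closure as a slice knot.

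Next I would check that $\widehat\beta$ is a knot rather than a multi-component link. Each half twist $H_{\gamma_i}$ acts on the $n$ punctures as the transposition $\tau_i$ swapping the two endpoints of $\gamma_i$, so the permutation underlying $\beta$ is the product $\tau_{n-1}\cdots\tau_1$ of $n-1$ transpositions on $n$ letters; this is an $n$-cycle precisely when the graph on $\{p_1,\dots,p_n\}$ with edge set $\{\tau_1,\dots,\tau_{n-1}\}$ is connected (hence a tree, having $n-1$ edges on $n$ vertices). Since $m-1=n-2<2n-4$, the parameter $k$ in the proof of Proposition~\ref{prop:m-2} is $0$, so the endpoints of $\gamma_1,\dots,\gamma_{n-1}$ sweep once around the punctures and the resulting transposition graph is connected; hence $\widehat\beta$ is a knot. (If the arcs produced by the proof of Proposition~\ref{prop:m-2} happened to yield a disconnected transposition graph for some small $n$, one may reorder the $\gamma_i$ or adjust the final arc within its isotopy class, which changes neither qp-length nor the FDTC computation, to force connectedness.)

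Finally, with $\widehat\beta$ a knot, Rudolph's Lemma~\ref{chi-sharp} gives $\chi_4(\widehat\beta)=n-m=n-(n-1)=1$; since $\chi_4(K)=1-2g_4(K)$ for a knot $K$, this forces $g_4(\widehat\beta)=0$, i.e.\ $\widehat\beta$ is slice. (Equivalently: the quasipositive Bennequin surface of $\beta$ has Euler characteristic $n-m=1$ and connected boundary $\widehat\beta$, hence is connected, hence is an embedded $2$-disk in $S^3$, so $\widehat\beta$ is in fact the unknot.) This produces, for each $n\geq 3$, a slice knot with quasipositive $n$-braid representative $\beta$ satisfying $BT(\beta)=n-2$, and together with Theorem~\ref{thm:estimate} it shows the bound $BT(\beta)\leq 2g_4(\widehat\beta)+(n-2)$ is sharp.

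The main obstacle is the middle step: verifying that the particular arcs $\gamma_i$ coming out of the proof of Proposition~\ref{prop:m-2} close up to a \emph{connected} link. Once that is settled, the rest is a direct substitution into Proposition~\ref{prop:m-2}, Proposition~\ref{prop1}, and Lemma~\ref{chi-sharp}, with the only nontrivial bookkeeping being the tracking of the punctures permuted by the half twists $H_{\gamma_i}$.
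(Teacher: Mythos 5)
Your proposal is correct and follows essentially the same route as the paper: substitute $m=n-1$ into Proposition~\ref{prop:m-2}, verify that the closure is a knot by checking that the product of the $n-1$ transpositions induced by the half twists is an $n$-cycle (the paper does this by noting $\gamma_k$ joins the $k$-th and $(k+1)$-st punctures, so the composition is a cyclic permutation), and then apply Lemma~\ref{chi-sharp} to get $\chi_4(\widehat\beta)=1$, hence $g_4(\widehat\beta)=0$.
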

\begin{proof} For $m=n-1$, the construction of Proposition~\ref{prop:m-2} produces a quasipositive braid $\beta$ with $BT=n-2$. If we show that $\beta$ is connected, then we get a 
slice knot, since by Lemma~\ref{chi-sharp} we have $\chi_4(\beta)=1$. To see connectedness, observe that the monodromy of $\beta$ is the product of positive half-twists about 
the arcs $\gamma_1, \gamma_2, \dots \gamma_{n-1}$ constructed in  Proposition~\ref{prop:m-2}. The arc $\gamma_k$ connects the $k$-th puncture to the $(k+1)$-st puncture, so the 
corresponding half-twist acts on the set of punctures by a transposition interchanging the $k$-th and $(k+1)$-st punctures. It follows that the composition of the half-twists
about $\gamma_1, \gamma_2, \dots \gamma_{n-1}$ gives a cyclic permutation of all the punctures: $n \mapsto n-1$, \dots, $3 \mapsto 2$, $2 \mapsto 1$, $1 \mapsto n$. So the resulting 
braid is connected. 
\end{proof}

In general, knots and links can be represented by a braid word of the form
\begin{equation}\label{eq:general-word}
(w_1 \s_{i_1} w_1^{-1})^{\epsilon_1} \ (w_2 \s_{i_2} w_2^{-1})^{\epsilon_2} \cdots (w_m \s_{i_m}w_m^{-1})^{\epsilon_m}
\end{equation}
where the exponents $\epsilon_1,\dots,\epsilon_m \in \{1, -1\}$. 
When $\epsilon_j=1$ the word $w_j \s_{i_j} w_j^{-1}$ is called a positive syllable  
and when  $\epsilon_j=-1$ the word $(w_j \s_{i_j} w_j^{-1})^{-1}$ is called  a negative syllable. 
Let $p_+ \geq 0$ be the number of positive syllables and $p_- \geq 0$ be the number of negative syllables in the above braid representative (\ref{eq:general-word}). 
Thus $p_+ +p_-=m$. 
Note that when $p_+=m$ and $p_-=0$ the braid is quasipositive. 
Theorem~\ref{thm:estimate} has another corollary.

\begin{corollary}\label{corA}
Let $\beta\in B_n$ be the closure of an $n$-braid $$
(w_1 \s_{i_1} w_1^{-1})^{\epsilon_1} \ (w_2 \s_{i_2} w_2^{-1})^{\epsilon_2} \cdots (w_m \s_{i_m}w_m^{-1})^{\epsilon_m}
$$ that does not admit destabilizations and satisfies $\chi_4(\beta)=n-m$. 
Let $p_+$ (resp. $p_-$) be the number of positive (resp. negative) syllables. Then the FDTC of $\beta$ has the following upper and lower bounds:
$$\min\{-p_-+1, 0\} \leq BT(\beta) \leq \max\{p_+-1, 0\}$$
\end{corollary}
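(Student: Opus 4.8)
The plan is to reduce Corollary~\ref{corA} to the quasipositive case handled by Theorem~\ref{thm:estimate} by a symmetrization argument. Writing $\beta$ as the word in~\eqref{eq:general-word}, I would first split the syllables into the $p_+$ positive ones and the $p_-$ negative ones. The idea is to build two auxiliary braids: $\beta_+$, obtained from $\beta$ by deleting all the negative syllables (keeping the positive ones in order), and $\beta_-$, obtained by deleting all the positive syllables. Then $\beta_+$ is a quasipositive braid of qp-length $p_+$, and $\beta_-^{-1}$ is quasipositive of qp-length $p_-$. I expect to relate $BT(\beta)$ to $BT(\beta_+)$ and $BT(\beta_-)$ using the quasimorphism and conjugacy-invariance properties (Proposition~\ref{prop:FDTCprops}) together with the arc criterion (Proposition~\ref{arcs-FT}).

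Concretely, for the upper bound: after a conjugation I would arrange that deleting negative syllables moves the image of some arc weakly to the right, so that $BT(\beta) \le BT(\beta_+)$ if $p_+ \ge 1$, and $BT(\beta) \le 0$ if $p_+ = 0$ (in the latter case $\beta$ is a negative word up to conjugacy and Proposition~\ref{arcs-FT}(2) gives $BT(\beta)\le 0$). Then Theorem~\ref{thm:estimate} applied to $\beta_+$ gives $BT(\beta_+) \le p_+ - 1$, because the hypotheses ($\chi_4(\hat\beta) = n-m$ and no destabilizations) should force $\beta_+$ to have qp-length exactly $p_+$ — this is where I would invoke Lemma~\ref{chi-sharp} and the non-destabilizability assumption to rule out any collapse of the quasipositive length. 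Symmetrically, passing to $\bar\beta = \beta^{-1}$ (or to the mirror), the negative syllables of $\beta$ become positive syllables of $\bar\beta$, and the same argument bounds $BT(\bar\beta) = -BT(\beta)$ from above by $\max\{p_- - 1, 0\}$, which is exactly the lower bound $BT(\beta) \ge \min\{-p_- + 1, 0\}$ after negating.

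The main obstacle I anticipate is making precise the inequality $BT(\beta) \le BT(\beta_+)$: deleting a negative syllable $(w_j\s_{i_j}w_j^{-1})^{-1}$ from a braid word is not an operation that obviously respects the Dehornoy ordering or the FDTC, since the FDTC is only a quasimorphism, not a homomorphism, and errors could accumulate over the $m$ syllables. The clean way around this is probably to avoid syllable-by-syllable deletion entirely and instead argue at the level of arcs: find an essential arc $\gamma$ starting on $C = \partial D_n$ such that the full monodromy $\phi_\beta$ carries $\gamma$ to the right of $T_C^{-p_-+1}(\gamma)$ and to the left of $T_C^{p_+-1}(\gamma)$, by tracking how each positive half-twist $H_{\gamma_j}$ can add at most one to the winding and each negative half-twist can subtract at most one, with the first twist contributing $0$ as in Lemma~\ref{lem:FTbound}. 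Then Lemma~\ref{lemma:fracDehn} delivers both bounds at once. The condition $\chi_4(\hat\beta)=n-m$ and non-destabilizability are exactly what prevent the positive and negative syllables from cancelling in a way that would break this arc estimate, so I would fold those hypotheses into the arc-tracking step rather than into a separate length computation.
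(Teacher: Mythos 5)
Your proposal is correct and follows essentially the same route as the paper: the paper also deletes the negative syllables (replacing each $\epsilon_j=-1$ by exponent $0$) to obtain a quasipositive word $\beta_P$ of qp-length $p_+$, observes that $\beta(\gamma)\preceq\beta_P(\gamma)$ for every properly embedded arc $\gamma$, concludes $BT(\beta)\leq BT(\beta_P)\leq\max\{p_+-1,0\}$ via Theorem~\ref{thm:estimate}, and gets the lower bound symmetrically. One small simplification: your worry about the qp-length of $\beta_+$ collapsing is unnecessary, since Lemma~\ref{lem:FTbound} bounds the FDTC by (number of syllables in the given word) $-1$ regardless of whether that word realizes the minimal qp-length.
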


\begin{proof}
Given a braid word 
$
(w_1 \s_{i_1} w_1^{-1})^{\epsilon_1} \ (w_2 \s_{i_2} w_2^{-1})^{\epsilon_2} \cdots (w_m \s_{i_m}w_m^{-1})^{\epsilon_m}
$
we define a quasipositive braid word as follows. 
For $j=1,\dots,m$ let 
$$
\delta_j =  \left\{ 
\begin{array}{ccl}
1 & {\rm if } & \epsilon_j=1, \\
0 & {\rm if } & \epsilon_j=-1.
\end{array}
\right.
$$
Let $$\beta_P = (w_1 \s_{i_1} w_1^{-1})^{\delta_1} \ (w_2 \s_{i_2} w_2^{-1})^{\delta_2} \cdots (w_m \s_{i_m}w_m^{-1})^{\delta_m}.
$$
Note that $\beta_P$ is a quasipositive (possibly trivial) braid of qp-length $p_+$.  
For every properly embedded arc $\gamma$ in the $n$-punctured disk $D_n$ the images under $\beta$ and $\beta_P$ satisfy 
$$\beta(\gamma) \preceq \beta_P(\gamma)$$
thus, by Theorem~\ref{thm:estimate} $BT(\beta) \leq BT(\beta_P) \leq \max\{ p_+-1, 0\}$. 

The lower bound can be obtained similarly. 
\end{proof}

Even though Proposition \ref{prop:m-2} claims sharpness, 
the estimates of the FDTC in both Theorem~\ref{thm:estimate} and Corollary~\ref{corA} are in general very rough for most braids as the next Proposition \ref{prop:stab} implies.

\begin{proposition}\label{prop:stab}
Let $\beta \in B_n$. 
If $\hat\beta$ admits a positive (resp. negative) braid destabilization then $0\leq BT(\beta)\leq1$ (resp. $-1\leq BT(\beta)\leq 0$). 
\end{proposition}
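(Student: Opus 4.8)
The plan is to reduce, using conjugacy invariance of the FDTC (Proposition~\ref{prop:FDTCprops}(d)), to the model case $\beta = \beta'\sigma_{n-1}^{\pm 1}$ with $\beta'$ in the subgroup $B_{n-1}\subset B_n$ generated by $\sigma_1,\dots,\sigma_{n-2}$, and then to sandwich $BT(\beta)$ between two elementary estimates. Indeed, saying that $\hat\beta$ admits a positive (resp.\ negative) braid destabilization means precisely that, after conjugating, $\beta$ has the form $\beta'\sigma_{n-1}$ (resp.\ $\beta'\sigma_{n-1}^{-1}$) with $\beta'\in B_{n-1}$ (this $B_{n-1}$ acting on the first $n-1$ punctures of $D_n$, with $p_n$ the trivial strand). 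Since $BT$ is a conjugacy invariant, it suffices to bound $BT(\beta'\sigma_{n-1}^{\pm 1})$.

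For one side of the inequality I would use Proposition~\ref{arcs-FT}(2): concatenating any braid word for $\beta'$ in $\sigma_1^{\pm},\dots,\sigma_{n-2}^{\pm}$ with the single letter $\sigma_{n-1}$ gives a braid word for $\beta'\sigma_{n-1}$ in which the generator $\sigma_{n-1}$ occurs with positive exponents only (in fact exactly once), so $BT(\beta'\sigma_{n-1})\geq 0$; symmetrically $BT(\beta'\sigma_{n-1}^{-1})\leq 0$. For the other side I would invoke the quasimorphism property (Proposition~\ref{prop:FDTCprops}(a)), which gives $|BT(\beta'\sigma_{n-1}^{\pm 1}) - BT(\beta') - BT(\sigma_{n-1}^{\pm 1})|\leq 1$, and then show both $BT(\beta')$ and $BT(\sigma_{n-1}^{\pm 1})$ vanish in $B_n$ when $n\geq 3$. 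For $\sigma_{n-1}^{\pm 1}$ this is the observation that a single half-twist on $D_n$ is reducible and acts trivially near $\partial D_n$, i.e.\ $FT(H_{\gamma_1})=0$ exactly as in Lemma~\ref{lem:FTbound} (with the same argument, or homogeneity, covering the negative half-twist). For $\beta'$ I would use that the standard inclusion $B_{n-1}\hookrightarrow B_n$ is realized by a subdisk $D'\subset D_n$ containing $p_1,\dots,p_{n-1}$ but disjoint from $p_n$ and from $\partial D_n$; hence $\phi_{\beta'}$ fixes an essential arc $\gamma$ running from $\partial D_n$ to $p_n$ in the complement of $D'$, and Lemma~\ref{lemma:fracDehn} applied with $T_C^{0}(\gamma)=\gamma=\phi_{\beta'}(\gamma)=T_C^{0}(\gamma)$ gives $BT(\beta')=0$.

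Putting these together, the quasimorphism estimate yields $-1\leq BT(\beta'\sigma_{n-1}^{\pm 1})\leq 1$, and intersecting with the one-sided bound from Proposition~\ref{arcs-FT}(2) gives $0\leq BT(\beta)\leq 1$ in the positive case and $-1\leq BT(\beta)\leq 0$ in the negative case. The degenerate case $n=2$, in which $\beta$ is forced to equal $\sigma_1^{\pm 1}$ with $BT(\sigma_1^{\pm 1})=\pm\tfrac12$, is checked directly. The only point I expect to require genuine care is the vanishing $BT(\beta')=0$: one has to be sure the algebraic inclusion $B_{n-1}\hookrightarrow B_n$ is genuinely supported in a subdisk missing the boundary, so that an honest essential arc fixed by $\phi_{\beta'}$ is available to feed into Lemma~\ref{lemma:fracDehn}.
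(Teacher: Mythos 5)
Your proposal is correct and follows essentially the same route as the paper: reduce by conjugacy to $\beta = w\sigma_{n-1}^{\pm1}$ with $w\in B_{n-1}$, show $BT(w)=0$ and $BT(\sigma_{n-1}^{\pm1})=0$ via arcs fixed by each factor and Lemma~\ref{lemma:fracDehn}, apply the quasimorphism property to get $|BT(\beta)|\leq 1$, and pin down the sign with a one-sided arc estimate (the paper uses $\delta\prec\phi_\beta(\delta)$ directly where you cite Proposition~\ref{arcs-FT}(2), but that proposition is itself proved by the identical arc argument). Your explicit treatment of $n=2$ is a small refinement the paper omits.
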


\begin{proof}
Let $\hat\beta$ be a closed $n$-braid that admits a positive destabilization. 
Since the FDTC is invariant under conjugation, we may assume that 
$\beta$ is represented by a braid word $w \sigma_{n-1}$ where $w \in B_{n-1}$. 
Define arcs $\delta$ and $\gamma_{n-1}$ as in Figure~\ref{fig:Arc-delta}. 
We may identify the braids $\sigma_{n-1}$ and $w$ with the mapping classes $H_{\gamma_{n-1}}$ and $\phi_w \in \MCG(D_n)$. 
Since $w \in B_{n-1}$ we get $\phi_w(\delta)=\delta$. 
%In particular, $\phi_w$ is of reducible type and $BT(\phi_w)=0$. Likewise, as $H_{\gamma_{n-1}}$ is reducible, we get $BT(H_{\gamma_{n-1}})=0$.
By Lemma~\ref{lemma:fracDehn} we get $BT(\phi_w)=0$.
Likewise, we get $BT(H_{\gamma_{n-1}})=0$ because we can find some properly embedded arc that is fixed by the half twist $H_{\gamma_{n-1}}$.
By Proposition~\ref{prop:FDTCprops} we get $|BT(\beta)| \leq 1$.

Since $T_C^0(\delta)=\delta \prec \phi_\beta(\delta) = H_{\gamma_{n-1}}(\delta)$, Lemma~\ref{lemma:fracDehn} implies that $0\leq BT(\beta)$.
Thus we get $0\leq BT(\beta)\leq1$.
\end{proof}

\begin{figure}[htbp]
\includegraphics*[scale=0.5]{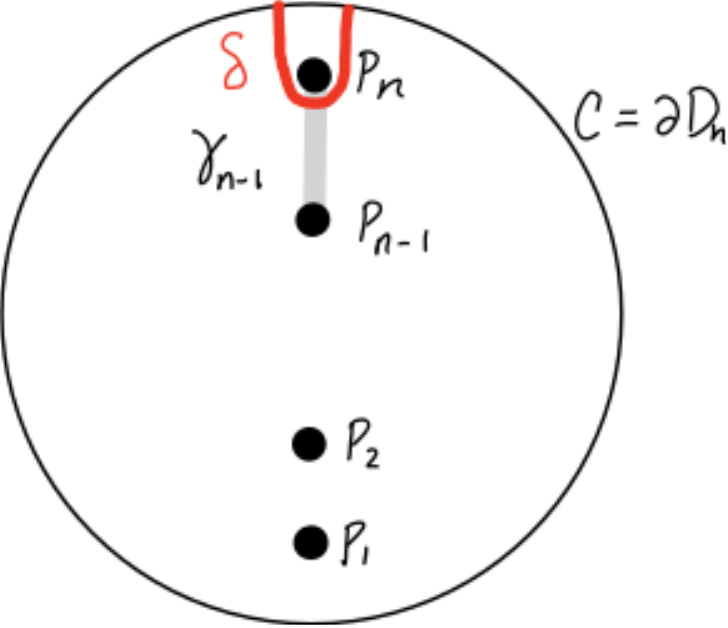} %bb=85 490 300 700
\caption{} 
\label{fig:Arc-delta}
\end{figure}

\section{An interesting example} \label{sec:examplesthatkillthedream}

The examples in the last section show that the statement of  Ito's Theorem \ref{thm:ito} does not hold when we replace the genus with the slice genus, since Corollary \ref{slice-BT} produces slice knots that are the closures of $n$-braids with $BT$ equal to $n-2$. In this section we give additional examples, fundamentally different from those in Corollary \ref{slice-BT}, that also show that Theorem \ref{thm:ito} does not hold for the slice genus. Notice that for quasipositive $3$-braids $\beta$ whose closures are slice, Theorem \ref{thm:estimate} implies that $|BT(\beta)| \leq 1$. In Proposition \ref{examplesthatkillthedream}, we produce examples of $3$-braids that have arbitrarily large $|BT|$:

\begin{proposition}\label{examplesthatkillthedream}
For each even $m>0$, there exists a knot $K_m$ and its 3-braid representative $\beta_m$ such that $g_4(K_m)\leq m/2 +1$ while $[\beta_m]_D=m$.
\end{proposition}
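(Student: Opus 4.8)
The plan is to realize $K_m$ as the closure of a three-braid $\beta_m=\Delta^{2m}\gamma_m\in B_3$, where the corrective word $\gamma_m$ is chosen to be \emph{positive} in Dehornoy's order -- so that the $m$ positive full twists survive and force $[\beta_m]_D=m$ -- while carrying enough \emph{negative} crossings that its closure, together with the full twists, can be cheaply trivialized in $B^4$. The exponents of $\gamma_m$ are arranged so that the permutation underlying $\beta_m$ is a $3$-cycle, i.e.\ $K_m$ is a knot; one may also set things up so that, before the full twists are inserted, $\widehat{\gamma_m}$ is a manifestly slice connected sum such as $T_{2,q}\# T_{2,-q}$, presenting $K_m$ as that slice knot ``twisted up'' by $m$ full twists.

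First I would verify $[\beta_m]_D=m$. Since $\Delta^2$ is central this is precisely the pair of comparisons $\Delta^{2m}\preceq\beta_m\prec\Delta^{2m+2}$, equivalently $1\preceq\gamma_m$ and $\gamma_m\prec\Delta^2$. The first holds as soon as $\gamma_m$ admits a $\sigma_1$-positive braid word. For the second one checks that $\Delta^2\gamma_m^{-1}$ admits a positive (hence $\sigma_i$-positive) word -- for instance when $\gamma_m=\sigma_1\sigma_2^{-b}$ one uses that $\Delta^2\sigma_1^{-1}$ is a positive word and that $\Delta^2$ is central to rewrite $\Delta^2\gamma_m^{-1}=\sigma_2^{b}(\Delta^2\sigma_1^{-1})$ as a positive word. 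As a sanity check, the quasimorphism and full-twist properties of Proposition~\ref{prop:FDTCprops}, together with Proposition~\ref{arcs-FT} applied to $\gamma_m$, give $BT(\beta_m)=m+O(1)$, and the relation~\eqref{BTfloor} then places $[\beta_m]_D$ within a bounded distance of $m$; the direct order comparison is what makes it exact.

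Next I would establish $g_4(K_m)\leq m/2+1$. The crucial point is that this estimate cannot be extracted from any Seifert surface: because $[\beta_m]_D=m$, Ito's Theorem~\ref{thm:ito} forces $g_3(K_m)\gtrsim\tfrac54 m$, so Seifert's algorithm on any braid representative produces a surface of genus at least that, far above $m/2+1$. One must instead work in $B^4$. I would exhibit an explicit sequence of band (saddle) moves on the closed-braid diagram of $\beta_m$ -- pairing off negative crossings of $\gamma_m$ against crossings of the full-twist block $\Delta^{2m}$, at a cost of roughly one band per full twist -- terminating at a diagram of an unlink, or of the manifestly slice knot $T_{2,q}\# T_{2,-q}$. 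If this uses $b$ bands and ends with $c$ components, the cobordism inequality gives $g_4(K_m)\leq\tfrac12(b-c+1)$, and with $b$ of order $m$ this is at most $m/2+1$. Equivalently, one shows that about $m/2+1$ crossing changes turn $K_m$ into a slice knot.

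The main obstacle is the genuine tension between these two goals. A large Dehornoy floor forces $\beta_m$ to be strongly right-veering, which pushes it toward being ``very positive'', and for genuinely positive braids the four-genus grows at the Bennequin rate -- indeed Proposition~\ref{asymp} shows that adding $n$ full twists to a fixed three-braid makes $g_4$ grow like $\tfrac92 n$, not like $n/2$. Threading the needle means keeping $\gamma_m$ positive enough in Dehornoy's order (via $\sigma_1$-positivity of some word for it) that the floor cannot drop below $m$, while making it negative enough in crossings that the $B^4$ cobordism absorbs almost all of the $m$ full twists. Finding a single family $\gamma_m$ for which \emph{both} the exact floor computation and the genus-$(m/2+1)$ cobordism go through is the heart of the matter, and is where I expect most of the work to lie; if one additionally wants the ``non-quasipositive'' feature emphasized in the surrounding text, a slice--Bennequin comparison can be used to obstruct quasipositivity of $\beta_m$.
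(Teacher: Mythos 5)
Your treatment of the Dehornoy floor is essentially the paper's: the paper takes $\beta_m=\Delta^{2m}\sigma_2\sigma_1\sigma_2^{-2k}$ and verifies $[\beta_m]_D=m$ exactly as you describe, by checking that $(\Delta^{2m})^{-1}\beta_m=\sigma_2\sigma_1\sigma_2^{-2k}$ and $\beta_m^{-1}\Delta^{2m+2}=\sigma_2^{2k}(\sigma_2\sigma_1)^2$ are both $\sigma_1$-positive. That half of your argument is fine.

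The four-genus bound, however, is a genuine gap: you correctly identify it as ``the heart of the matter'' and then do not supply it. You propose roughly $m$ band moves terminating at an unlink or at a manifestly slice knot such as $T_{2,q}\#T_{2,-q}$, but you exhibit neither the corrective word $\gamma_m$ nor the band moves, and there is no reason to expect such a sequence to exist for the targets you name. The mechanism that actually works is different in kind: the paper performs only \emph{two} band moves, taking the closure of $\Delta^{2m}\sigma_2\sigma_1\sigma_2^{-2k}$ to the connected sum $T_{3,3m+1}\#-T_{2,2k+1}$, and then the smallness of $g_4$ is not visible diagrammatically at all --- it comes from Feller's formula $g_4(T_{3,3m+1}\#-T_{2,2k+1})=\max\{|\tau(T_{3,3m+1})-\tau(T_{2,2k+1})|,\,|\Upsilon_{T_{3,3m+1}}(1)-\Upsilon_{T_{2,2k+1}}(1)|\}$. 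Choosing $2k=5m$ (whence the hypothesis that $m$ be even) makes both differences equal to $m/2$, so $g_4(K_m')=m/2$ and $g_4(K_m)\le m/2+1$ after accounting for the two bands. In other words, the needle is threaded not by cancelling the full twists against negative crossings band-by-band, but by landing on a difference of torus knots of different ``slopes'' whose genera nearly cancel in concordance --- a fact certified by concordance invariants ($\tau$ and $\Upsilon$), not by an explicit low-genus surface. Without that input, or an actual construction of your proposed cobordism, the inequality $g_4(K_m)\le m/2+1$ is unproved.
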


\begin{center}
\begin{figure}[h]
\includegraphics[scale=.28]{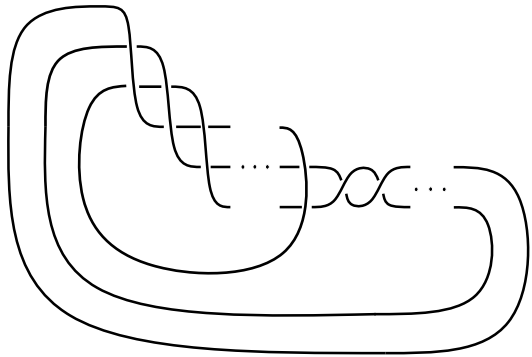} 
\caption{$K_{m,k}$ is the closure of the braid $\beta=(\sigma_2\sigma_1)^{3m+1}\sigma_2^{-2k}$}\label{braid}
\end{figure}
\end{center}

\begin{center}
\begin{figure}
\includegraphics[scale=.4]{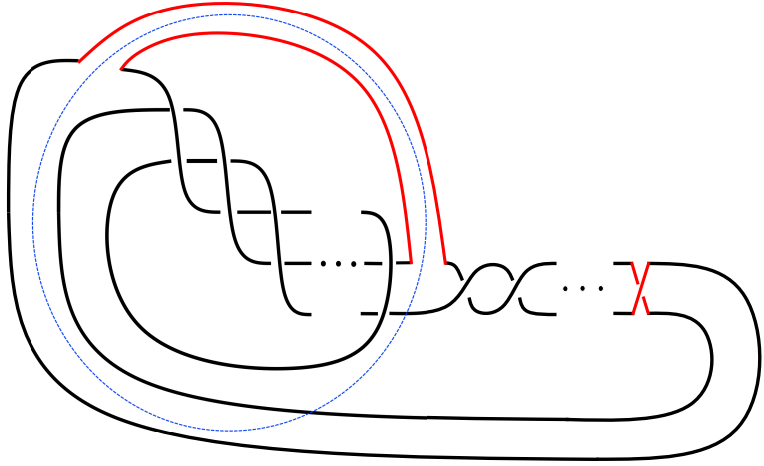} 
\caption{$K_{m,k}$ after two band moves have been performed, with a decomposing sphere}\label{concordance}
\end{figure}
\end{center}

We now describe the examples 
necessary to prove Proposition \ref{examplesthatkillthedream}. Consider the braid word $\beta=(\sigma_2\sigma_1)^{3m+1}\sigma_2^{-2k}$ and let $K_{m,k}=\hat{\beta}$, see Figure \ref{braid}. Note that using the braid relations we can write $\beta=\Delta^{2m}\sigma_2\sigma_1\sigma_2^{-2k}$ where $\Delta=\sigma_1\sigma_2\sigma_1$ is the Garside element for braids of 3 strands.

\begin{proposition}\label{floorofexamplesthatkillthedream} Let $\beta=(\sigma_2\sigma_1)^{3m+1}\sigma_2^{-2k}$. Then we have $[\beta]_D=m$.
\end{proposition}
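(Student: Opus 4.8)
The plan is to compute the Dehornoy floor of $\beta=(\sigma_2\sigma_1)^{3m+1}\sigma_2^{-2k} = \Delta^{2m}\sigma_2\sigma_1\sigma_2^{-2k}$ directly from the definition, namely by producing the two strict inequalities $\Delta^{2m+2}\succ\beta\succ\Delta^{2m}$ in Dehornoy's ordering. Since $\Delta^2$ is central in $B_3$, after left-multiplying by $\Delta^{-2m}$ both inequalities reduce to statements about the single "tail" braid $\alpha := \sigma_2\sigma_1\sigma_2^{-2k}$: we must show $\Delta^2 \succ \alpha \succ 1$. By the definition of $\succ$ via $\sigma_i$-positive words, $\alpha \succ 1$ amounts to checking that $\alpha$ admits a $\sigma_i$-positive braid word for some $i$ (no $\sigma_i^{-1}$, no $\sigma_j^{\pm}$ with $j<i$), and $\Delta^2 \succ \alpha$ amounts to checking that $\alpha^{-1}\Delta^2 = \sigma_2^{2k}\sigma_1^{-1}\sigma_2^{-1}\cdot\sigma_1^2\sigma_2^2\sigma_1^2$ (using $\Delta^2=(\sigma_1\sigma_2)^3$, or a more convenient normal form) is $\sigma_i$-positive for some $i$.

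First I would dispatch the lower bound $\alpha\succ 1$. In $B_3$ there is exactly one generator of minimal index, $\sigma_1$, so a $\sigma_1$-positive word is simply one containing $\sigma_1$ but not $\sigma_1^{-1}$; and the word $\sigma_2\sigma_1\sigma_2^{-2k}$ is visibly of this form (it contains one $\sigma_1$ and no $\sigma_1^{-1}$). Hence $\alpha\succ 1$ immediately, so $\beta\succ\Delta^{2m}$. The real content is the upper bound $\Delta^2\succ\alpha$, equivalently $\Delta^2\alpha^{-1}\succ 1$. Here I would use the relation $\Delta^2 = \sigma_1\sigma_2^2\sigma_1\cdot\sigma_1 = \sigma_1\sigma_2^2\sigma_1^2$ (or $\Delta\sigma_i=\sigma_{n-i}\Delta$) to rewrite $\Delta^2\alpha^{-1}=\Delta^2\sigma_2^{2k}\sigma_1^{-1}\sigma_2^{-1}$. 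The goal is to massage this, using only the braid relations, into a word in which $\sigma_1$ appears but $\sigma_1^{-1}$ does not. The positive powers $\sigma_2^{2k}$ and the ample positive content of $\Delta^2$ should absorb the single $\sigma_1^{-1}$; concretely one pushes the $\sigma_1^{-1}$ leftward through the $\sigma_2$'s via $\sigma_2\sigma_1\sigma_2 = \sigma_1\sigma_2\sigma_1$ (so $\sigma_2^{-1}\sigma_1^{-1} = \sigma_1\sigma_2^{-1}\sigma_1^{-1}\sigma_2^{-1}$-type moves) until it cancels against a $\sigma_1$ coming out of $\Delta^2$.

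The main obstacle I anticipate is precisely this bookkeeping in the upper-bound step: verifying that $\Delta^2\sigma_2^{2k}\sigma_1^{-1}\sigma_2^{-1}$ can be rewritten as a $\sigma_1$-positive (or, failing that, $\sigma_2$-positive) word, uniformly in $k$. One clean way to organize it is to note $\alpha = \sigma_2\sigma_1\sigma_2^{-2k}$ is conjugate-free of any help since the floor is not a conjugacy invariant, so I must work with $\alpha$ literally; I would instead verify $\Delta^2 \succ \alpha$ by exhibiting $\alpha \prec \Delta^2$ geometrically — tracking the image of the $y$-axis under $\alpha$ and checking it stays to the left of its image under $\Delta^2$ — which sidesteps the word combinatorics. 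As a consistency check on the final answer, Proposition~\ref{prop:FDTCprops} together with the relation \eqref{BTfloor} gives $BT(\beta)\in\{m,m+1\}$, and since $\beta=\Delta^{2m}\cdot\sigma_2\sigma_1\sigma_2^{-2k}$ with the tail containing $\sigma_1$ positively only, Proposition~\ref{arcs-FT} forces $BT(\beta)\ge m$; this is consistent with $[\beta]_D = m$ and will be used in Section~\ref{sec:examplesthatkillthedream} to conclude $|BT(\beta_m)|$ is large.
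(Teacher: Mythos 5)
Your overall strategy coincides with the paper's: use centrality of $\Delta^2$ in $B_3$ to reduce both inequalities to statements about the tail $\alpha=\sigma_2\sigma_1\sigma_2^{-2k}$, then verify $\sigma_1$-positivity of the relevant quotients. Your lower bound is complete and correct: $(\Delta^{2m})^{-1}\beta=\sigma_2\sigma_1\sigma_2^{-2k}$ visibly contains $\sigma_1$ and no $\sigma_1^{-1}$.

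The gap is in the upper bound, and it stems from a false identity for $\Delta^2$. In $B_3$ one has $\Delta^2=(\sigma_1\sigma_2)^3=(\sigma_2\sigma_1)^3=\sigma_1\sigma_2^2\sigma_1\sigma_2^2$, but \emph{not} $\sigma_1^2\sigma_2^2\sigma_1^2$ (in the full twist every pair of strands links once, whereas in $\sigma_1^2\sigma_2^2\sigma_1^2$ strands $1$ and $3$ have linking number zero), and certainly not $\sigma_1\sigma_2^2\sigma_1^2$, which has the wrong exponent sum. Starting from the wrong normal form, you never see that the ``bookkeeping'' you flag as the main obstacle is in fact an immediate cancellation: with $\Delta^2=(\sigma_2\sigma_1)^3$,
$$\alpha^{-1}\Delta^2=\sigma_2^{2k}\sigma_1^{-1}\sigma_2^{-1}(\sigma_2\sigma_1)^3=\sigma_2^{2k}(\sigma_2\sigma_1)^2,$$
a positive, hence $\sigma_1$-positive, word uniformly in $k$ --- which is exactly the computation in the paper. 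As written, neither your word-combinatorial route (launched from a false identity and not carried through) nor your geometric alternative (tracking the $y$-axis) is actually executed, so the upper bound is unproved until you supply this cancellation. Note also that your closing ``consistency check'' cannot patch the hole: Proposition~\ref{arcs-FT} combined with (\ref{BTfloor}) gives $BT(\beta)\geq m$ and hence only $[\beta]_D\geq m-1$; it provides no upper bound on the Dehornoy floor.
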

\begin{proof} To show this, it is enough to show that $\Delta^{2m}\leq_D \beta <_D \Delta^{2m+2}$. To check that $ \beta <_D \Delta^{2m+2}$ we must show that $\beta^{-1}\Delta^{2m+2}$ is $\sigma_i-$positive for $i=1$ or 2. Simplifying, we have 

\begin{align*}
\beta^{-1}\Delta^{2m+2}&=\sigma_2^{2k}\sigma_1^{-1}\sigma_2^{-1}\Delta^{-2m}\Delta^{2m+2}\\
&=\sigma_2^{2k}\sigma_1^{-1}\sigma_2^{-1}\Delta^2\\
&=\sigma_2^{2k}\sigma_1^{-1}\sigma_2^{-1}(\sigma_2\sigma_1)^3\\
&=\sigma_2^{2k}(\sigma_2\sigma_1)^2,
\end{align*}
which is in fact a $\sigma_1$-positive word. On the other hand 
\begin{align*}
(\Delta^{2m})^{-1}\beta&=\Delta^{-2m}\Delta^{2m}\sigma_2\sigma_1\sigma_2^{-2k}\\
&=\sigma_2\sigma_1\sigma_2^{-2k},
\end{align*}
which is also a $\sigma_1$-positive word. The proposition follows.
\end{proof}

We are now ready to prove Proposition \ref{examplesthatkillthedream}. 

\begin{proof}(of Proposition \ref{examplesthatkillthedream})
For any knot in the family $K_{m,k}$ there is a sequence of two band moves on $K$ that yields the connected sum of torus knots $K'=T_{3,3m+1}\#-T_{2,2k+1}$, see Figure \ref{concordance}.

By \cite[Corollary 3]{Fe16}, we have
\[g_4(K')=\mbox{max}\{|\tau(T_{3,3m+1})-\tau(T_{2,2k+1})|,|v(T_{3,3m+1})-v(T_{2,2k+1})|\}\]
where $v(K):=\Upsilon_K(1)$ is the Upsilon invariant evaluated at $t=1$.

The values of $\tau$ and $v$ can be explicitly calculated for some torus knots. Let $p$ and $q$ be positive integers, then by \cite{OS03}
\[\tau(T_{p,q})=\frac{(p-1)(q-1)}{2}.\]
For $v$ we have that 
\[v(T_{2,2k+1})=-k \mbox{ and } v(T_{3,3m+1})=-2m\]
as computed by Feller \cite{Fe16}.

Now for each even integer $m$, let $K_m$ denote the closure of the braid $\beta_m=(\sigma_2\sigma_1)^{3m+1}\sigma_2^{-5m}$. Let $K_m'=T_{3,3m+1}\#-T_{2,5m+1}$ which is obtained from $K_m$ from the two band moves illustrated in Figure \ref{concordance}. Then

\[|\tau(T_{3,3m+1})-\tau(T_{2,5m+1})|=|3m-5m/2|=m/2\]

and

\[|v(T_{3,3m+1})-v(T_{2,5m+1})|=|-2m+5m/2|=m/2.\]

Thus, $g_4(K_m')=m/2$. Since $K_m$ and $K_m'$ are related by two band moves, $g_4(K_m) $ and $g_4(K_m')$ differ by at most 1 and we obtain $g_4(K_m)\leq m/2+1$. Together with Proposition \ref{floorofexamplesthatkillthedream}, this concludes the proof.
\end{proof}

\section{Potential bounds on slice genus from the braid perspective} 
\label{sec:BTgenusbounds}

It is possible that the examples in Proposition \ref{prop:m-2} and Proposition~\ref{examplesthatkillthedream} represent a worst-case scenario. Inspired by these examples and  Theorem~\ref{thm:quasipositiveBTbound}, in this section we further probe the following question.
\quesgenusbounds*
Equivalently, similar questions can be asked about the Dehornoy's floor $[\beta]_D$, since $[\beta]_D$ and the fractional Dehn twist coefficient $BT(\beta)$ are related by~(\ref{BTfloor}).

One of the crucial observations necessary for the examples in Proposition~\ref{examplesthatkillthedream}
is that there exist cobordisms of controlled genus between stacked braids and connected sums of closed braids. 
We observe here that with a positive answer to Question~\ref{conj:genusbounds} (or any similar bounds), this could produce an application: 
one could find bounds on the slice genus of connected sums of large torus knots. 
The slice genus of sums and differences of torus knots has been studied in several papers, see for instance \cite{Fe16, LV18, AA19, Li18, Al19}. 

In addition to the quasipositive braids studied in Section 4, we are able to answer Question~\ref{conj:genusbounds} for certain slice knots. In fact, under some additional hypotheses,
slice knots satisfy stronger bounds. First, note that if $\beta$ closes 
to the unknot, then $|BT(\beta)|\leq 1$ by Theorem~\ref{thm:ito}. (It is easy to find examples showing that even for the unknot, $BT$ does not have to be zero.) 

Further, recall that 
taking the connected sum of a knot and its mirror is a useful method for constructing  slice knots: for any $K$, $K \# -K$ is a slice knot.

\begin{proposition}[Follows directly from \cite{Ma04}] Choose an arbitrary oriented knot $K$ in $S^{3}$. For any braid $\beta$ that closes to 
$\hat\beta = K \# -K$, we have that $|BT(\beta)| \leq 1$. 
\end{proposition}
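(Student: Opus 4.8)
The plan is to use the quasimorphism and homogeneity properties of the FDTC for braids (Proposition~\ref{prop:FDTCprops}), together with a specific geometrically natural braid representative of $K \# -K$. The key idea I would pursue: $K \# -K$ has a braid representative of the form $\beta_0 \cdot \overline{\beta_0}^{\,*}$, where $\overline{\beta_0}^{\,*}$ is obtained from $\beta_0$ by reversing the word and inverting each generator (the ``mirror-reverse'' braid), so that its closure is $-K$ and the two summands are joined along a band. For such a symmetric braid one expects $BT$ to vanish, or at worst be controlled by the quasimorphism defect.

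First I would pin down exactly what Ma's results in \cite{Ma04} give. The cleanest route is: any braid $\beta$ that closes to $K \# -K$ is conjugate to a braid of a standard connected-sum form, and Malyutin shows (this is the ``follows directly from \cite{Ma04}'' the proposition cites) that the FDTC of a braid whose closure is a connected sum $\widehat{\beta_1} \# \widehat{\beta_2}$ is sandwiched by the FDTCs of $\beta_1$ and $\beta_2$ in an appropriate sense, or more precisely that $\omega$-type invariants are additive/subadditive under the connected-sum operation on braids. Concretely, if $\beta = \beta_1 * \beta_2$ realizes the connected sum (stacking on a common strand), then $|BT(\beta) - BT(\beta_1) - BT(\beta_2)| \le 1$ by a quasimorphism-type estimate, and $BT(\beta_2) = -BT(\beta_1)$ because $\widehat{\beta_2} = -\widehat{\beta_1}$ forces $\beta_2$ to be (conjugate to) the mirror-reverse of $\beta_1$, whose FDTC is the negative of $BT(\beta_1)$ by the orientation-reversing symmetry of the braid group acting on the disk. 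Combining these yields $|BT(\beta)| \le 1$.

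The main obstacle I anticipate is handling the \emph{non-uniqueness} of the braid representative $\beta$: the proposition asserts the bound for \emph{any} $\beta$ closing to $K \# -K$, not just the obvious connected-sum braid. Two braids with the same closure are related by Markov moves (conjugation and (de)stabilization), and while $BT$ is a conjugacy invariant, it is \emph{not} invariant under stabilization (indeed Proposition~\ref{prop:stab} shows stabilized braids have $|BT| \le 1$ automatically). So the real content is that \emph{every} braid representative of a connected sum either destabilizes toward the standard form or already satisfies $|BT|\le 1$. This is presumably precisely the statement extracted from \cite{Ma04} — Malyutin's twist-number inequalities are Markov-move-robust in the right way — so the step I would be most careful about is citing the correct theorem of \cite{Ma04} and verifying that its hypotheses cover an arbitrary (not necessarily minimal-index, not necessarily connected-sum-form) representative. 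If that robustness is not directly available, the fallback is to invoke that a braid whose closure is a nontrivial connected sum is, after conjugation, a composite braid in Birman--Menasco's sense, apply the additivity of the FDTC up to the quasimorphism defect on the two factors, and then use $BT(\text{mirror-reverse of }\beta_1) = -BT(\beta_1)$ to conclude. I would also note the trivial edge case: if $K$ is the unknot then $\hat\beta$ is the unknot and $|BT(\beta)|\le 1$ follows already from Theorem~\ref{thm:ito}, which is consistent.
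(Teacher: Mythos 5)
There is a genuine gap, and it sits exactly where you flagged it. The statement you need from \cite{Ma04} is not an additivity or subadditivity estimate for connected sums of braids; it is Malyutin's Theorem~15.3, a \emph{primeness criterion}: if $\alpha$ is a braid with $|BT(\alpha)|>1$ whose closure is a knot, then $\hat\alpha$ is prime. The paper's entire proof is the contrapositive of this: $K\#-K$ is a connected sum, hence not prime, hence every braid representative $\beta$ satisfies $|BT(\beta)|\le 1$. This handles arbitrary representatives with no decomposition argument at all (note the conclusion uses only that $\hat\beta$ is composite; sliceness of $K\#-K$ plays no role), and the unknot edge case is absorbed since the unknot is not prime (and is covered by Theorem~\ref{thm:ito} in any case).

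Your proposed route does not close the gap on its own. First, an arbitrary braid whose closure is composite is \emph{not} in general conjugate to a composite braid $\beta_1*\beta_2$; the Birman--Menasco composite braid theorem only produces such a form after destabilizations and exchange moves, and destabilization destroys control of $BT$ (as Proposition~\ref{prop:stab} illustrates, it forces $|BT|\le 1$ for entirely different reasons). Second, even granting a composite form with $\widehat{\beta_2}=-\widehat{\beta_1}$, the step $BT(\beta_2)=-BT(\beta_1)$ is unjustified: $BT$ is not determined by the closure, so $\beta_2$ need not be (conjugate to) the mirror-reverse of $\beta_1$, and the two coefficients need not cancel. Your construction of the \emph{specific} symmetric representative $\beta_0\cdot\overline{\beta_0}^{\,*}$ with $BT$ near zero is fine as far as it goes, but it proves the bound only for that representative, whereas the proposition quantifies over all of them. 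The fix is simply to cite and apply Malyutin's primeness theorem directly.
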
 
\begin{proof}
Theorem 15.3 of \cite{Ma04} states that for any braid $\alpha$ such that $|BT(\alpha)| > 1$ and $\hat\alpha$ is a knot, $\hat\alpha$ is prime - that is, it cannot be expressed as a connected sum. The proposition directly follows by the contrapositive.
\end{proof}

We provide evidence for a positive answer to Question~\ref{conj:genusbounds} for special classes of three-braids in Theorem~\ref{slice3-braids}. 

\thmslicethreebraids*

A powerful tool at our disposal is a classification up to conjugation due to Murasugi. Before proving the theorem, we state the classification of 3-braids and note some properties of slice 3-braids.

\begin{theorem}[\cite{Mu74}]
\label{3-braidclassification}
Let $w$ be a braid word in $B_3$. Then $w$ is conjugate to one of the following:

\begin{enumerate}
\item $\Delta^{2d}\sigma_1\sigma_2^{-a_1}\cdots \sigma_1\sigma_2^{-a_n}$, where the $a_i\geq 0$ with at least one $a_i\neq 0$ 
\item $\Delta^{2d}\sigma_2^m$ for $m\in \mathbb{Z}$
\item $\Delta^{2d}\sigma_1^m\sigma_2^{-1}$ where $m\in\{-1,-2,-3\}$.
\end{enumerate}
\end{theorem}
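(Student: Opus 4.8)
The plan is to derive Murasugi's list from the structure of $B_3$ as a central extension of $PSL_2(\mathbb Z)$. First I would record the standard facts: with $\Delta=\sigma_1\sigma_2\sigma_1$ one has $\Delta^2=(\sigma_1\sigma_2)^3$, the element $\Delta^2$ generates the center $Z(B_3)$, and $B_3/Z(B_3)\cong PSL_2(\mathbb Z)\cong\mathbb Z/2*\mathbb Z/3$, with the free factors generated by the image $\bar a$ of $\Delta$ (order $2$) and the image $\bar b$ of $\sigma_1\sigma_2$ (order $3$); under this identification $\bar\sigma_1=\bar b^{2}\bar a$, $\bar\sigma_2=\bar a\bar b^{2}$, and $\bar\sigma_2^{-1}=\bar b\bar a$. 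Because the quotient map has central kernel, two braids are conjugate in $B_3$ as soon as the image of one is conjugate to $\Delta^{2d}$ times the image of the other for some $d\in\mathbb Z$; and since the exponent sum $e\colon B_3\to\mathbb Z$ (with $e(\sigma_i)=1$, $e(\Delta^{2})=6$) is a conjugacy invariant, once the conjugacy class in the quotient is known the correct $d$ is forced. So it is enough to classify conjugacy classes in $\mathbb Z/2*\mathbb Z/3$ and, for each, exhibit one lift to $B_3$ and then massage it, by braid relations, conjugation, and multiplication by a power of $\Delta^2$, into one of the three stated forms.

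Next I would invoke the normal form theorem for free products: every element of $\mathbb Z/2*\mathbb Z/3$ is conjugate to a cyclically reduced word, and the possibilities are (a) the identity; (b) $\bar a$; (c) $\bar b$ or $\bar b^{2}$; (d) a cyclically reduced alternating word $\bar a\bar b^{\epsilon_1}\bar a\bar b^{\epsilon_2}\cdots\bar a\bar b^{\epsilon_n}$ with each $\epsilon_i\in\{1,2\}$, well defined up to cyclic rotation. Cases (a)--(c) I would dispatch directly. Case (a): a lift is $\Delta^{2d}$, which is form (2) with $m=0$. Case (b): a lift is $\Delta^{2d+1}=\Delta^{2d+2}\Delta^{-1}$, and since $\Delta=\sigma_1\sigma_2\sigma_1\sim\sigma_2\sigma_1^{2}$ by a cyclic permutation we get $\Delta^{-1}\sim\sigma_1^{-2}\sigma_2^{-1}$, hence the class is that of $\Delta^{2d+2}\sigma_1^{-2}\sigma_2^{-1}$, form (3) with $m=-2$. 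Case (c), the element $\bar b^{2}$: a lift is $\Delta^{2d}(\sigma_1\sigma_2)^{-1}=\Delta^{2d}\sigma_2^{-1}\sigma_1^{-1}\sim\Delta^{2d}\sigma_1^{-1}\sigma_2^{-1}$, form (3) with $m=-1$; the element $\bar b$: a lift is $\Delta^{2d}\sigma_1\sigma_2=\Delta^{2(d+1)}(\sigma_1\sigma_2)^{-3}\sigma_1\sigma_2=\Delta^{2(d+1)}(\sigma_1\sigma_2)^{-2}$, and using $(\sigma_1\sigma_2)^{-2}=\sigma_2^{-1}(\sigma_1^{-1}\sigma_2^{-1}\sigma_1^{-1})=\sigma_2^{-2}\sigma_1^{-1}\sigma_2^{-1}$, then conjugating by $\Delta$ (which swaps $\sigma_1$ and $\sigma_2$) and cyclically permuting, one reaches $\Delta^{2(d+1)}\sigma_1^{-3}\sigma_2^{-1}$, form (3) with $m=-3$.

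The heart of the argument is case (d). I would rotate the cyclic word so that it begins with $\bar b^{2}$ whenever some $\epsilon_i=2$. If all the $\epsilon_i$ are equal, the word is $(\bar a\bar b^{2})^{\pm n}$, which is conjugate to $\bar\sigma_2^{\mp n}$ (using $\bar a\bar b^{2}=\bar\sigma_2$ and $\bar b\bar a=\bar\sigma_2^{-1}$ after a rotation); lifting and correcting by $\Delta^{2d}$ gives form (2) with general $m$. Otherwise the $\bar b$-exponents, read cyclically starting from a $\bar b^{2}$, form a pattern $2,1^{(k_1)},2,1^{(k_2)},\dots,2,1^{(k_r)}$ with $r\ge1$ and at least one $k_j\ge1$; grouping the word as $(\bar b^{2}\bar a)(\bar b\bar a)^{k_1}\cdots(\bar b^{2}\bar a)(\bar b\bar a)^{k_r}$ and substituting $\bar b^{2}\bar a=\bar\sigma_1$, $\bar b\bar a=\bar\sigma_2^{-1}$, it equals the image of $\sigma_1\sigma_2^{-k_1}\sigma_1\sigma_2^{-k_2}\cdots\sigma_1\sigma_2^{-k_r}$, so lifting this word and multiplying by the appropriate $\Delta^{2d}$ yields exactly form (1) (the hypothesis ``some $a_i\ne0$'' matching ``not all $\epsilon_i$ equal''). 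Since the theorem asserts only that each braid lies in \emph{some} one of the three families, exhaustiveness of (a)--(d) completes the proof.

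The step I expect to be the main obstacle is the bookkeeping of case (d): translating faithfully between the free-product alphabet $\{\bar a,\bar b\}$ and the Artin alphabet $\{\sigma_1,\sigma_2\}$, handling the cyclic rotations, and tracking through each family exactly which power of $\Delta^2$ the chosen lift produces so that it agrees with the one dictated by the exponent sum. The torsion cases (b) and (c) also need a little care: there the image in $PSL_2(\mathbb Z)$ is elliptic, and to see that the list is tight rather than merely exhaustive one should note that $\bar b$ and $\bar b^{2}$ are not conjugate, which is precisely what separates $m=-1$ from $m=-3$ in form (3). A less structural alternative is Murasugi's original argument, which proves by induction on word length that braid relations and conjugation push any word into one of the three forms directly; that avoids the $PSL_2(\mathbb Z)$ input at the cost of a longer chain of explicit reductions.
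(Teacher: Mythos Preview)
The paper does not prove this theorem; it is quoted from Murasugi \cite{Mu74} as background input for the analysis of $3$-braids and is used as a black box (together with Baldwin's results) in the proof of Theorem~\ref{slice3-braids}. So there is no ``paper's own proof'' to compare against.

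That said, your argument is a correct and standard route to Murasugi's classification. The reduction to conjugacy classes in $B_3/Z(B_3)\cong PSL_2(\mathbb Z)\cong\mathbb Z/2*\mathbb Z/3$, followed by the cyclically-reduced normal form in the free product and the dictionary $\bar b^{2}\bar a=\bar\sigma_1$, $\bar b\bar a=\bar\sigma_2^{-1}$, is exactly how this result is usually re-derived in the modern literature. Your handling of the torsion classes (producing the three values $m\in\{-1,-2,-3\}$ in form~(3)) and of the hyperbolic/parabolic classes (producing forms~(1) and~(2)) is accurate; the minor sign slip in the ``all $\epsilon_i$ equal'' sub-case is harmless since either way one lands in form~(2). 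One small point worth tightening: your sentence ``two braids are conjugate in $B_3$ as soon as the image of one is conjugate to $\Delta^{2d}$ times the image of the other'' is literally vacuous (in the quotient $\Delta^{2d}$ is trivial); what you use, and what is true, is that if the images are conjugate in $B_3/Z(B_3)$ then the braids themselves are conjugate in $B_3$ after multiplying one of them by a suitable power of $\Delta^{2}$, and the exponent sum pins that power down. Murasugi's original proof proceeds instead by direct word reduction; your approach is cleaner and makes transparent why exactly three exceptional forms appear in~(3), corresponding to the three nontrivial torsion conjugacy classes $\bar a,\bar b,\bar b^{2}$ in $PSL_2(\mathbb Z)$.
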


We will say a braid word in $B_3$ is in Murasugi normal form if it takes one of the above three forms. Baldwin classifies which closures of $3$-braids are quasi-alternating in the following theorem.

\begin{theorem}[Theorem 8.6 in \cite{Bal08}]
\label{QAbraids}
Suppose $L$ is link with braid index at most $3$ and is the closure of a braid $\beta$ represented by $w\in B_3$ which is in Murasugi normal form. Then $L$ is quasi-alternating if and only if one of the the following holds:

\begin{enumerate}
\item $w$ is in the first class and $d\in\{-1,0,1\}$;
\item $w$ is in the second class and either $d=1$ and $m\in\{-1,-2,-3\}$ or $d=-1$ and $m\in \{1,2,3\}$;
\item $w$ is in the third class and $d\in\{0,1\}$.
\end{enumerate}
\end{theorem}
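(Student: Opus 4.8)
The plan is to prove the two implications separately, using Murasugi's trichotomy (Theorem~\ref{3-braidclassification}) to organize the case analysis and relying on two external inputs about quasi-alternating links: (a) every quasi-alternating link $L$ is thin (its reduced Khovanov homology over $\mathbb{Q}$ lies in a single $\delta$-grading), its branched double cover $\Sigma(L)$ is a Heegaard Floer L-space with $|H_1(\Sigma(L))|=\det(L)$, and in fact $\Sigma(L)$ bounds a \emph{sharp} negative-definite $4$-manifold in the sense of Ozsv\'ath--Szab\'o; and (b) the Champanerkar--Kofman twisting operation preserves quasi-alternating-ness, so adding parallel half-twists at a suitable crossing of a quasi-alternating diagram again yields a quasi-alternating link. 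The effect of Murasugi's theorem is that, after deleting any $a_i=0$ (which shortens the word) and passing to mirrors when convenient, the question reduces to: for each of the three ``shapes'', which exponents $d$ of the full twist $\Delta^{2d}$ give a quasi-alternating closure?

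For the \emph{sufficiency} direction I would exhibit each listed closure as quasi-alternating directly. When $d=0$ in the first class and all $a_i\ge 1$, the word $\sigma_1\sigma_2^{-a_1}\cdots\sigma_1\sigma_2^{-a_n}$ is an alternating braid word, so $\widehat\beta$ is an alternating link and hence quasi-alternating; input (b) then lets one vary the $a_i$ freely starting from small base cases. The cases $d=\pm1$ in the first class, the finite list $\Delta^{\pm2}\sigma_2^{\mp m}$ ($m\in\{1,2,3\}$) in the second class, and the list $\Delta^{2d}\sigma_1^{m}\sigma_2^{-1}$ ($d\in\{0,1\}$, $m\in\{-1,-2,-3\}$) in the third, are each handled by one skein triple: resolving a crossing inside the full twist $\Delta^{\pm2}$ (resp.\ inside $\sigma_1^m$ or $\sigma_2^m$) produces an unoriented triple $(L,L_0,L_\infty)$ in which $L_0$ and $L_\infty$ are already known to be quasi-alternating (from the $d=0$ alternating case, or from a strictly shorter Murasugi word handled by induction), and one checks $\det(L)=\det(L_0)+\det(L_\infty)$ with $\det(L_0),\det(L_\infty)\neq 0$. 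The determinants can be read off from the reduced Burau representation of $B_3$, so this is a finite, if slightly tedious, computation.

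For the \emph{necessity} direction I would show that every Murasugi word outside the listed ranges closes to a non-quasi-alternating link by obstructing (a). The organizing observation is that the $3$-punctured disk is double-branched-covered by the once-punctured torus, so $\Sigma(\widehat\beta)$ carries a genus-one open book with monodromy the image of $\beta$ in $\mathrm{MCG}(\Sigma_{1,1})$; equivalently $\Sigma(\widehat\beta)$ has an explicit surgery/plumbing description built from the rational tangles $\sigma_1^{\pm1}$, $\sigma_2^{-a_i}$ and the full twist $\Delta^{2d}$. For the second and third Murasugi classes the closures are Montesinos links, so $\Sigma(\widehat\beta)$ is Seifert fibered over $S^2$ (a connected sum of lens spaces, a prism manifold, or a small Seifert fibered space); I would write down its Seifert invariants as explicit functions of $d$ and $m$ and apply the classification of which Seifert fibered rational homology spheres are L-spaces (equivalently, carry no coorientable taut foliation), a concrete numerical criterion on those invariants. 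For the first class I would instead use that $\Sigma(\widehat\beta)$ bounds the negative-definite plumbing read off from the (almost-)alternating diagram, apply the Ozsv\'ath--Szab\'o criterion for when a plumbed $3$-manifold is an L-space, and bring in the correction-term obstruction that a quasi-alternating link's branched double cover bounds a sharp negative-definite filling. In each family one then checks the relevant criterion holds exactly for $|d|\le1$ (first class) and for the listed $(d,m)$ (second and third), and fails otherwise; when $\Sigma(\widehat\beta)$ is reducible one uses that a connected sum of rational homology spheres is an L-space iff each summand is. For the borderline cases --- $d=1$ versus $d=2$ in the first class, and the endpoints of the $m$-lists --- a useful cross-check is the failure of Khovanov thinness for $3$-braid closures that contain two or more full twists of the same sign.

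The main obstacle is this necessity step: identifying $\Sigma(\widehat\beta)$ and its Seifert/plumbing data uniformly across the three families --- in particular tracking exactly how the $\Delta^{2d}$ factor enters --- and then matching the resulting L-space (or correction-term) inequality precisely to the stated parameter ranges, with genuine care at the boundary values of $d$. A secondary, purely organizational subtlety is that the statement concerns links of braid index \emph{at most} $3$, so one must first reach Murasugi normal form correctly (handling $a_i=0$, Markov destabilizations, and the braid index $1$ and $2$ cases: the unknot, the $(2,k)$ torus links, and their split or connected combinations) and confirm that these fit the stated classification.
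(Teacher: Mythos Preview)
The paper does not prove this theorem. It is stated as Theorem~\ref{QAbraids} with the attribution ``Theorem 8.6 in \cite{Bal08}'' and is quoted verbatim from Baldwin's paper as an external input; the paper then simply \emph{applies} it (together with Murasugi's classification and Baldwin's Proposition 1.6) to deduce Corollary~\ref{concordanceQA} and Theorem~\ref{slice3-braids}. There is therefore no ``paper's own proof'' to compare your proposal against.

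Your outline is a reasonable sketch of how Baldwin's argument actually proceeds --- the branched double cover of a $3$-braid closure does carry a genus-one, one-boundary-component open book, and Baldwin analyzes the Heegaard Floer homology of these manifolds and their L-space/thinness properties to pin down exactly which $d$ (and $m$) give quasi-alternating closures. But assessing whether your sketch matches Baldwin's proof in detail, or whether the necessity step you describe can really be made to line up exactly at the boundary values, requires consulting \cite{Bal08}, not the present paper. For the purposes of this paper you should simply cite the result, as the authors do.
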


Baldwin also obtains a result on 3-braids with finite concordance order.

\begin{proposition}[Proposition 1.6 in \cite{Bal08}] If $K$ is a knot with braid index at most $3$ and $K$ has finite concordance order, then $K$ can be represented as the closure of a braid of the form $\Delta^{2d}\sigma_1\sigma_2^{-a_1}\cdots \sigma_1\sigma_2^{-a_n}$ where the $a_i\geq 0$ with at least one $a_j\neq 0$, $d\in\{-1,0,1\}$, and the $a_i$ satisfy some further conditions.
\end{proposition}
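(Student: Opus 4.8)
The plan is to combine Murasugi's conjugacy classification of $3$-braids (Theorem~\ref{3-braidclassification}) with the elementary fact that a knot of finite order in the concordance group $\mathcal C$ has vanishing concordance homomorphisms: since $\tau\colon\mathcal C\to\ZZ$ and $s\colon\mathcal C\to\ZZ$ are homomorphisms to a torsion-free group, $n[K]=0$ forces $\tau(K)=s(K)=0$, and likewise the (ordinary and Tristram--Levine) signatures of such a $K$ must all vanish. So I would first write $K=\widehat\beta$ with $\beta$ conjugate to a braid in one of the three Murasugi normal forms, and then use these vanishing statements --- fed by the classical computation of the Seifert form and signature of a $3$-braid closure --- to discard the unwanted forms and pin down $d$ and the $a_i$.

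I would dispatch the second and third classes first. The underlying permutation of $\Delta^{2d}\sigma_2^{m}$ is that of $\sigma_2^m$, a transposition or the identity, never a $3$-cycle, so its closure has at least two components and is not our knot. For the third class $\Delta^{2d}\sigma_1^{m}\sigma_2^{-1}$, $m\in\{-1,-2,-3\}$, the closure is a knot only when $m$ is odd; and for $m=-1$ (resp.\ $m=-3$), conjugating by $\sigma_1$ and by $\Delta$ and using that $\Delta^2=(\sigma_1\sigma_2)^3$ is central identifies $\widehat\beta$ with the closure of $(\sigma_1\sigma_2)^{3d-1}$ (resp.\ $(\sigma_1\sigma_2)^{3d-2}$), i.e.\ with a torus knot $T(3,3d-1)$ (resp.\ $T(3,3d-2)$). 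By Proposition~\ref{prop:tauprops} such a torus knot has $\tau\neq0$, hence infinite concordance order, unless it is unknotted --- which happens only for $d=0$ in the first case and $d=1$ in the second. So a finite-order $K$ is either the unknot, which is already the closure of the first-class braid $\sigma_1\sigma_2^{-1}$ (with $d=0$, $n=1$, $a_1=1$), or the closure of a first-class braid $\beta=\Delta^{2d}\sigma_1\sigma_2^{-a_1}\cdots\sigma_1\sigma_2^{-a_n}$ with $a_i\geq0$ and some $a_j\neq0$.

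It then remains to show, for such a first-class $\beta$, that finite concordance order forces $d\in\{-1,0,1\}$ together with the stated conditions on the $a_i$. When the $a_i$ are small compared with $d$ --- say $d\geq\sum_i a_i$ --- the braid is in fact positive, each $\sigma_2^{-1}$ being absorbed into a full twist exactly as in the proof of Proposition~\ref{asymp}, so $\tau(K)=g_3(K)>0$ and $K$ has infinite order; the substance lies in the general $a_i$. There the key input is Murasugi's and Erle's evaluation of the Tristram--Levine signature function of a $3$-braid closure from its (explicit) Seifert matrix, which expresses $\sigma_\omega(\widehat\beta)$ as a function of $d$ and the $a_i$. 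Requiring all of these to vanish becomes a combinatorial condition on $(d,a_1,\dots,a_n)$, and I would analyze that condition to show it is satisfiable only when $|d|\leq1$ and the $a_i$ obey the further restrictions of the statement; if the signatures alone do not finish the $|d|\leq1$ cases, one supplements them with the Heegaard Floer correction terms of the double branched cover $\Sigma_2(K)$, which for a $3$-braid closure is an explicit small Seifert fibered space.

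The hard part is exactly this last combinatorial analysis of the signature (and, if needed, $d$-invariant) obstructions over the infinite first-class family --- converting ``all Tristram--Levine signatures of $\widehat\beta$ vanish'' into ``$|d|\leq1$ and the $a_i$ satisfy [\,\ldots]''. Everything else is Murasugi's classification, a count of underlying permutations, the identification of the third-class closures with torus knots, or the formal vanishing of concordance homomorphisms on torsion.
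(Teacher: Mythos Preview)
The paper does not give its own proof of this proposition: it is quoted as Proposition~1.6 of \cite{Bal08} and used as a black box to obtain Corollary~\ref{concordanceQA}. So there is no in-paper argument to compare against; what you have written is an outline of a proof of Baldwin's result itself.

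Your reduction through Murasugi classes~(2) and~(3) is correct: the permutation count rules out class~(2), and your identification of the class~(3) closures (for odd $m$) with the torus knots $T(3,3d-1)$ and $T(3,3d-2)$ is right, so $\tau\neq 0$ disposes of all but the unknot, which you place in class~(1) as $\widehat{\sigma_1\sigma_2^{-1}}$. That part is fine.

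For class~(1) you have correctly located the real content but not supplied it, and your weighting of the tools is inverted relative to Baldwin's proof. In \cite{Bal08} the Tristram--Levine signatures are not the primary obstruction; the decisive invariant is the Manolescu--Owens concordance homomorphism $\delta(K)=2d(\Sigma(K),\mathfrak s_0)$, the correction term of the double branched cover in its canonical $\text{spin}^c$ structure. For a $3$-braid closure, $\Sigma(K)$ is the genus-one, one-boundary-component open book that is the subject of \cite{Bal08}, and Baldwin computes its Heegaard Floer homology and $d$-invariants explicitly in terms of $d$ and the $a_i$. Imposing $\sigma(K)=0$ and $\delta(K)=0$ simultaneously is what forces $d\in\{-1,0,1\}$ together with the further combinatorial conditions on the $a_i$. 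So your ``if signatures alone do not finish, supplement with $d$-invariants'' understates matters: the $d$-invariant computation is the engine of the proof, and the step you defer as ``the hard part'' is precisely the computation Baldwin carries out.
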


This implies the following immediate corollary.

\begin{corollary}\label{concordanceQA} A knot $K$ with braid index at most $3$ which has finite concordance order is quasi-alternating.
\end{corollary}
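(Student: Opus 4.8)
The plan is to chain together the two results of Baldwin quoted just above. First I would invoke \cite[Proposition 1.6]{Bal08}: since $K$ is a knot of braid index at most $3$ with finite concordance order, it can be represented as the closure of a braid $\beta$ of the form $\Delta^{2d}\sigma_1\sigma_2^{-a_1}\cdots\sigma_1\sigma_2^{-a_n}$ with all $a_i\geq 0$, at least one $a_j\neq 0$, and $d\in\{-1,0,1\}$ (the additional unstated conditions on the $a_i$ will not be needed). Next I would observe that this braid word is, by definition, a word in the first class of Murasugi normal form appearing in Theorem~\ref{3-braidclassification}, so the standing hypothesis of Theorem~\ref{QAbraids} is met. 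Finally I would apply case (1) of Theorem~\ref{QAbraids}: a link of braid index at most $3$ presented by a first-class Murasugi word is quasi-alternating precisely when $d\in\{-1,0,1\}$, which is exactly the constraint obtained in the first step. Since $K=\widehat\beta$, this shows $K$ is quasi-alternating.

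The one point that warrants a moment of care is making sure the braid representative handed to us by \cite[Proposition 1.6]{Bal08} is literally in Murasugi normal form of the first type (which is how Theorem~\ref{QAbraids} is phrased), rather than merely conjugate to such a word; as the statements are quoted, the representative is already written in that form, so nothing needs to be done. One should also note that the extra conditions on the $a_i$ in \cite[Proposition 1.6]{Bal08} neither help nor hinder — the quasi-alternating criterion in Theorem~\ref{QAbraids}(1) depends only on the class and on $d$ — so they can be ignored.

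I do not expect any genuine obstacle here: the corollary is a direct composition of the two cited theorems of Baldwin, with the logical flow being ``finite concordance order $\Rightarrow$ first Murasugi class with $d\in\{-1,0,1\}$ $\Rightarrow$ quasi-alternating.'' The only thing resembling a subtlety is purely bookkeeping about which normalized braid words Theorem~\ref{QAbraids} applies to, and that is resolved by the explicit form of the representative in \cite[Proposition 1.6]{Bal08}.
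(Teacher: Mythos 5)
Your argument is correct and is exactly the reasoning the paper intends when it calls this an ``immediate corollary'': Baldwin's Proposition~1.6 supplies a first-class Murasugi representative with $d\in\{-1,0,1\}$, and Theorem~\ref{QAbraids}(1) then gives quasi-alternating. No further comment is needed.
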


\begin{proposition}\label{3-braidFDTC}
 Let $w\in B_3$ be a braid word in Murasugi normal form. Then

\begin{enumerate}
\item $BT(w)=d$ if $w$ is in class $(1)$.
\item $BT(w)=d$ if $w$ is in class $(2)$.
\item $BT(w)=d-1/3$, $BT(w)=d-1/2$ or $BT(w)=d-2/3$ if $m$ is $-1$, $-2$ or $-3$ respectively, and $w$ is in class $(3)$.
\end{enumerate}
\end{proposition}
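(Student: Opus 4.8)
The plan is to compute $BT(w)$ for each of the three Murasugi normal forms by combining the behaviour of the FDTC under full twists (Proposition~\ref{prop:FDTCprops}(c)) with the arc-based estimates of Proposition~\ref{arcs-FT}. The key structural observation is that in every case $w = \Delta^{2d}\cdot v$ for an explicit ``remainder'' braid $v$, so by Proposition~\ref{prop:FDTCprops}(c) we immediately get $BT(w) = d + BT(v)$. Thus everything reduces to computing $BT(v)$ for the three possible remainders: $v = \sigma_1\sigma_2^{-a_1}\cdots\sigma_1\sigma_2^{-a_n}$ in class (1), $v = \sigma_2^m$ in class (2), and $v = \sigma_1^m\sigma_2^{-1}$ with $m\in\{-1,-2,-3\}$ in class (3).

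For class (1), the remainder $v$ contains $\sigma_1$ with positive exponents only, so Proposition~\ref{arcs-FT}(2) gives $BT(v)\geq 0$; simultaneously $v$ contains $\sigma_2$ with negative exponents only (using that at least one $a_i\neq 0$), so Proposition~\ref{arcs-FT}(2) gives $BT(v)\leq 0$, hence $BT(v)=0$ and $BT(w)=d$. For class (2), $v=\sigma_2^m$ is a power of a single half-twist; since $n=3$, there is an essential arc fixed by $\sigma_2$ (the arc cutting off the first two punctures, say), so by Lemma~\ref{lemma:fracDehn} $BT(\sigma_2)=0$, and by homogeneity (Proposition~\ref{prop:FDTCprops}(b)) $BT(\sigma_2^m)=0$, giving $BT(w)=d$. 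For class (3), I would use homogeneity: $v = \sigma_1^m\sigma_2^{-1}$, and one checks directly that $v^3$ (or a suitable small power) simplifies, via the braid relations, to a central power of $\Delta$ times something with controlled FDTC — concretely, $(\sigma_1^m\sigma_2^{-1})^k$ should reduce to $\Delta^{2j}$ for appropriate $k,j$ depending on $m$, and then $BT(v) = \frac{1}{k}BT(\Delta^{2j}) = \frac{2j}{k}$; the arithmetic must yield $-1/3$, $-1/2$, $-2/3$ for $m=-1,-2,-3$. Alternatively one recognizes $\widehat{\Delta^{2d}\sigma_1^m\sigma_2^{-1}}$ as a torus knot (for $m=-1$ it is related to $T_{2,n}$-type examples) whose braid FDTC is known, or one uses the Dehornoy floor computation as in Proposition~\ref{floorofexamplesthatkillthedream} together with~(\ref{BTfloor}) applied to powers.

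The main obstacle is class (3): unlike classes (1) and (2), the remainder $\sigma_1^m\sigma_2^{-1}$ is \emph{not} handled by a single application of Proposition~\ref{arcs-FT} (it mixes a negative $\sigma_2$ with either negative or mixed $\sigma_1$ powers), and it genuinely has fractional FDTC, so a periodicity/homogenization argument is unavoidable. The cleanest route is probably to show $(\sigma_1^m\sigma_2^{-1})$ is periodic as a mapping class — i.e. some power equals a power of the full twist $\Delta^2$ — which is plausible since these braids close to torus knots, and then invoke homogeneity (Proposition~\ref{prop:FDTCprops}(b)) plus the full-twist formula (Proposition~\ref{prop:FDTCprops}(c)) to read off the fraction. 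Verifying the precise power relation for each of $m=-1,-2,-3$ via the braid relations is the one genuinely computational step, but it is a finite check.
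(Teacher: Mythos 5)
Your proposal follows essentially the same route as the paper: split off $\Delta^{2d}$ via Proposition~\ref{prop:FDTCprops}(c), kill the remainder in classes (1) and (2) using Proposition~\ref{arcs-FT} (the paper cites arcs-FT for class (2) as well, but your fixed-arc argument via Lemma~\ref{lemma:fracDehn} is equally valid), and handle class (3) by showing the remainder is periodic and invoking homogeneity. The ``finite check'' you defer is exactly what the paper records: $(\sigma_1^{-1}\sigma_2^{-1})^3=\Delta^{-2}$, $(\sigma_1^{-2}\sigma_2^{-1})^2=\Delta^{-2}$, and $(\sigma_1^{-3}\sigma_2^{-1})^3=\Delta^{-4}$, giving $-1/3$, $-1/2$, $-2/3$. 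One small slip: since $BT(\Delta^{2j})=j$ (each full twist contributes $1$), the homogeneity formula gives $BT(v)=j/k$, not $2j/k$ as written; with the correct formula the three identities above produce exactly the claimed values.
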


\begin{proof}
If $w$ is in class $(1)$ then $BT(w)=d+BT(\sigma_1\sigma_2^{-a_1}\cdots \sigma_1\sigma_2^{-a_n}) $.  If $w$ is in class $(2)$ then $BT(w)=d+BT(\sigma_2^m)=d$.
It follows from Proposition~\ref{arcs-FT} that the second summand in each case is zero,
so we obtain that $BT(\beta)= BT(w)=d$ for $w$ in class $(1)$ or $(2)$.

If  $w$ is in class $(3)$, then $BT(w)=d+BT(\sigma_1^m\sigma_2^{-1})$. Now $BT(\sigma_1^m\sigma_2^{-1})$ is not zero. There are three cases to consider since $m\in \{-1,-2,-3\}$. We compute the following using the Artin relations, and properties of the FDTC enumerated in Proposition \ref{prop:FDTCprops}. 

\begin{enumerate}
\item[(a)] $BT(\sigma_1^{-1}\sigma_2^{-1})=(1/3)BT((\sigma_1^{-1}\sigma_2^{-1})^3)=(1/3)BT(\Delta^{-2})=-1/3$

\item[(b)] $BT(\sigma_1^{-2}\sigma_2^{-1})=(1/2)BT((\sigma_1^{-2}\sigma_2^{-1})^2)=(1/2)BT(\Delta^{-2})=-1/2$

\item[(c)] $BT(\sigma_1^{-3}\sigma_2^{-1})=(1/3)BT((\sigma_1^{-3}\sigma_2^{-1})^3)=(1/3)BT(\Delta^{-4})=-2/3$
\end{enumerate}

Thus for $w$ in class $(3)$ we have that $BT(w)= d-1/3$, $BT(w)=d-1/2$ or $BT(w)=d-2/3$ if $m$ is $-1$, $-2$ or $-3$ respectively.
\end{proof}

\begin{proof}[Proof of Theorem \ref{slice3-braids}]
Corollary \ref{concordanceQA} shows that $\beta$ closes to a quasi-alternating knot. We know that $\beta$ is conjugate to a braid word $w$ in one of the three classes of Theorem \ref{3-braidclassification}. To close to a quasi-alternating knot, $w$ must satisfy the conditions on $d$ in Theorem \ref{QAbraids}. 

If $w$ is in class $(1)$ or $(2)$ this means that $-1\leq BT(w)\leq 1$ by Proposition \ref{3-braidFDTC}. If $w$ is in class $(3)$ then $-2/3\leq BT(w)\leq 1$. The theorem follows from the fact that $BT$ is a conjugacy invariant.
\end{proof}

We conclude the section with some computational evidence toward an affirmative answer to the first bound given in Question \ref{conj:genusbounds}.  We use the Hedden-Mark bound 
of Corollary~\ref{HM-braids}. The quantity $\dim_\mathbb{F} \widehat{HF}(\Sigma(K))$ can be obtained from computations done in \cite{BZ13}.
We set aside  slice knots with 3-braids representatives, as for these Question \ref{conj:genusbounds} is already answered by Theorem \ref{slice3-braids}. 
For all other knots with 12 crossings or fewer, we calculate that the inequality

\begin{equation}
\label{HFbounds?}
\dim_\mathbb{F} \widehat{HF}(\Sigma(K)) - |H_1(\Sigma(K))|+2\leq 2g_4(K)+n-2.
\end{equation}
holds  with 120 exceptions. As before, $n$ stands for the braid index. %\todo{Calculations still true after the typo fixed?}

This means that the bound $|BT(\beta)|\leq 2g_4(K)+n-2$ is valid for odd-strand braid representatives all but possibly 120 knots 
with 12 crossings or fewer, ie for at least 96\% of such knots. (According to KnotInfo,  there are 2,977 knots with 12 crossings or fewer.) 

Similarly, we can check that odd-strand braid representatives of all but 5 quasipositive knots of 12 crossings or fewer satisfy inequality~(\ref{HFbounds?}), and thus
they satisfy the first inequality in Question \ref{conj:genusbounds}. (For these knots, this experimental statement is stronger than Theorem~\ref{thm:estimate}, as 
it also applies to non-quasipositive braid representatives of quasipositive knots.)

\section{Fibered knots and knot Floer stable equivalence}\label{sec:knotfloerstable}

In Sections \ref{sec:knotfloerstable} and \ref{sec:fdtcsliceknots}, we collect some observations about the relationship between the fractional Dehn twist coefficient of fibered knots and their topological properties. 

We say that two knots are \emph{knot Floer stably equivalent} if their knot Floer complexes are isomorphic after possibly adding acyclic summands. Hom \cite{Ho17} shows that if two knots in $S^3$ are concordant, then they are knot Floer stably equivalent. Moreover, if two knots are knot Floer stably equivalent, then many invariants derived from knot Floer homology will coincide for the two knots. For example, the Ozsv\'ath-Stipsicz-Szab\'o concordance invariant $\Upsilon_K(t)$ \cite{OSS17} will satisfy $\Upsilon_{K_1}(t) = \Upsilon_{K_2}(t)$ if $K_1$ and $K_2$ are stably equivalent. 

He, Hubbard, and Truong showed in \cite{HHT19} that the $\Upsilon_K(t)$ invariant can detect right-veeringness. More precisely:

\begin{theorem}[{\cite[Theorem 1.3]{HHT19}}]
Suppose $K$ is a fibered knot in $S^{3}$. Then associated to $K$ is an open book decomposition $(\Sigma, \phi)$ of $S^{3}$. If $\Upsilon'_{K}(t) = -g$ for some $t \in [0,1)$, where $g$ is the genus of the fibered surface $\Sigma$, then $\phi : \Sigma \to \Sigma$ is right-veering.
\end{theorem}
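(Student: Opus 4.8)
The plan is to argue by contrapositive: assume $\phi$ is not right-veering and show that $\Upsilon'_K(t) \neq -g$ for all $t \in [0,1)$. By the characterization of right-veering via the fractional Dehn twist coefficient (see \cite{HKM07, HKM08} and the discussion in Section~\ref{sec:FDTC}), if the fibered monodromy $\phi$ is not right-veering, then $FT(K) \leq 0$. One would like to then invoke the support genus / taut foliation machinery: a fibered knot whose monodromy is right-veering corresponds to a tight contact structure, and the Heegaard Floer contact invariant is the tool that sees this. The key point is that $\Upsilon'_K(t) = -g$ for some $t$ is a strong condition forcing the knot Floer homology to look like that of a fibered knot supporting a tight structure.

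\textbf{Key steps.} First I would recall that for a fibered knot $K$ of genus $g$, the top Alexander grading of $\widehat{HFK}$ is one-dimensional, generated by a class $c$ whose image under the natural maps is (up to sign) the contact invariant of the supported contact structure $\xi_{(\Sigma,\phi)}$. Second, I would use the result of Baldwin--Vela-Vick or the Hedden--Mark--style analysis that when $\phi$ is not right-veering, the contact invariant vanishes, and more precisely that the filtered chain homotopy type of $\widehat{CFK}(K)$ near the top grading degenerates in a way detectable by $\Upsilon$. Third, I would translate this into a statement about the slope of $\Upsilon_K$: the derivative $\Upsilon'_K(t)$ at $t$ near $0$ records (roughly) the Alexander grading of the generator supporting $\widehat{HF}(S^3)$ in the associated $t$-modified complex, and $\Upsilon'_K(t) = -g$ forces that generator to live in the top Alexander grading, which is exactly the fibered/contact-class generator. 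If $\phi$ were not right-veering, this generator would be killed in homology — contradiction. So $\Upsilon'_K(t) = -g$ implies $\phi$ is right-veering.

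\textbf{Main obstacle.} The hard part will be making precise the bridge between the analytic invariant $\Upsilon_K$ and the right-veering property — specifically, showing that $\Upsilon'_K(t) = -g$ forces the surviving generator of $H_*$ of the $t$-modified complex to be the one detecting fiberedness, and then that this generator's survival is obstructed exactly when the monodromy fails to be right-veering. This requires combining the genus-detection property of $\widehat{HFK}$ \cite{OS04GenusBounds}, the fiberedness-detection results \cite{Gh08, Ni07}, and the interaction of the $\Upsilon$ construction with the Alexander filtration, together with the contact-geometric input that non-right-veering monodromy kills the relevant homology class. One would expect to lean on the structure of $\widehat{CFK}$ as a filtered complex and carefully track which generator computes $\widehat{HF}(S^3)$ in the $t$-deformed setting; the subtlety is that $\Upsilon'_K(t)$ a priori only constrains an Alexander-grading-weighted quantity, so ruling out cancellation that would lower the effective grading is where the real work lies.
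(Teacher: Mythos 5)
First, a caveat: the survey you are reading does not prove this statement at all --- it is imported verbatim from \cite{HHT19} --- so the only proof to compare against is the one in that paper. Your overall architecture is the right one and matches the spirit of \cite{HHT19}: identify the generator of the top Alexander grading $\widehat{HFK}(S^3,K,g)\cong\mathbb{Z}$ with the Ozsv\'ath--Szab\'o contact class of the supported open book, and use the fact that a non-right-veering monodromy forces that class to vanish. (One attribution correction: the vanishing you want is Honda--Kazez--Mati\'c, tight $\Rightarrow$ right-veering, hence non-right-veering $\Rightarrow$ overtwisted, combined with Ozsv\'ath--Szab\'o's vanishing of $c(\xi)$ for overtwisted structures and Hedden's theorem that $c(\xi_K)\neq 0$ if and only if $\tau(K)=g$; Hedden--Mark is about the FDTC and is not the relevant input here.)

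The genuine gap is exactly the step you flag as the ``main obstacle,'' and it is not a technicality --- it is where the entire content of the theorem lives. You need the implication: $\Upsilon'_K(t)=-g$ for some $t\in[0,1)$ forces $\tau(K)=g$ (equivalently, the class generating $\widehat{HF}(S^3)$ is not supported in Alexander filtration level $g-1$). Your sketch asserts that $\Upsilon'_K(t)=-g$ ``forces that generator to live in the top Alexander grading,'' but for $t>0$ the extremal generator of the $t$-modified complex is selected by a weighted combination of the Maslov and Alexander gradings, not by the Alexander grading alone; a priori a generator in Alexander grading $g$ could become extremal at some $t_0>0$ even though $\tau(K)<g$ and the top-graded class dies under the map to $\widehat{HF}(S^3)$. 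What rules this out is the hypothesis $t<1$, via the Ozsv\'ath--Stipsicz--Szab\'o constraint on singularities of the piecewise-linear function $\Upsilon_K$: at a singularity $t_0$ the supporting Alexander gradings must differ by at least $2/t_0$, so for $t_0<1$ the slope cannot creep from $-\tau(K)>-g$ down to $-g$ without violating the genus bound on Alexander gradings. Your proposal never invokes $t<1$ anywhere, and without it the argument does not close up; with it, the reduction to $\tau(K)=g$ and then to Hedden plus Honda--Kazez--Mati\'c goes through essentially as in \cite{HHT19}.
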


We note that Theorem 1.3 in \cite{HHT19} is slightly more general, as it is stated for null-homologous knots in rational homology three-spheres. 
Due to this theorem we are motivated to ask whether the knot Floer stable equivalence class of a fibered knot also can detect right-veeringness.
However, it is easy to see that the answer is no. 

\begin{lemma}
The right-veering property and the FDTC are {\em not} invariants of knot Floer stable equivalence of a fibered knot. 
\end{lemma}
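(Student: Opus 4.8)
The plan is to exhibit two fibered knots that are knot Floer stably equivalent but differ in their right-veering status (and hence in the sign of their FDTC). The cleanest way to do this is to find a fibered knot $K$ whose monodromy is right-veering and whose mirror $-K$ is \emph{not} right-veering, together with the observation that $K$ and $-K$ can be arranged to be knot Floer stably equivalent. The key fact I would use is that mirroring dualizes the knot Floer complex, and when this complex is stably trivial (i.e.\ stably equivalent to the complex of the unknot) — equivalently when all the concordance information it carries vanishes — the knot and its mirror land in the same stable equivalence class. So the strategy is: pick a fibered knot $K$ that (a) has trivial enough knot Floer complex that $K$ and $-K$ are stably equivalent, yet (b) has non-symmetric FDTC, so that exactly one of $K$, $-K$ is right-veering.

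First I would recall that for a fibered knot in $S^3$, Theorem~\ref{s3-fibered-thm} forces $FT(K)\in\{0\}\cup\{1/n : |n|\ge 2\}$, and by Proposition~\ref{prop:FDTCprops}(d) and the behavior of the FDTC under orientation reversal, $FT(-K) = -FT(K)$; moreover a fibered knot is right-veering iff $FT(K)>0$ (for irreducible monodromy) — in any case, if $FT(K)\ne 0$ then exactly one of $K$, $-K$ is right-veering. Second, I would take a concrete small example: the figure-eight knot $4_1$ is fibered with pseudo-Anosov monodromy and $FT(4_1)=0$ — that one won't work directly. Instead I would use the trefoil family or, better, a fibered knot that is known to be \emph{negative}-amphichiral or to have a stably trivial complex. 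Actually the slickest choice: take $K$ to be the positive trefoil $T_{2,3}$ (fibered, monodromy right-veering, $FT=1/2$) — its complex is not stably trivial, so $T_{2,3}$ and $-T_{2,3}$ are \emph{not} stably equivalent, and that example fails. The real point is that one must decouple the stable equivalence class from the mirror symmetry.

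The right approach, then, is to use a connected sum. Let $K = T_{2,3} \# -T_{2,3}$; this is fibered (connected sum of fibered knots is fibered, with monodromy the "boundary sum" of the two monodromies), it is slice, and its knot Floer complex is stably trivial, so $K$ is knot Floer stably equivalent to the unknot, which is \emph{not} right-veering (the identity monodromy is the unique class that is both left- and right-veering, but as a fibered knot the unknot's monodromy on the disk is the identity, which is not right-veering in the strict sense used here). Meanwhile I can compute $FT(K)$ for the connected-sum monodromy: the monodromies of $T_{2,3}$ and $-T_{2,3}$ have $FT = +1/2$ and $FT=-1/2$ on their respective fiber surfaces, but on the \emph{plumbed/summed} fiber surface the FDTC at the single boundary component need not vanish, and in fact one can arrange it to be nonzero (or, dually, arrange that the monodromy of $T_{2,3}\#-T_{2,3}$ is right-veering while that of the unknot is not, even though the two knots are stably equivalent). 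So the pair $\{K = T_{2,3}\#-T_{2,3},\ U = \text{unknot}\}$ — or a similarly constructed pair — gives two knots that are knot Floer stably equivalent but whose fibered monodromies differ in right-veeringness and in FDTC. To finish, I would verify that the monodromy of $T_{2,3}\#-T_{2,3}$ is genuinely right-veering: using Proposition~\ref{arcs-FT} and the description of the plumbed monodromy as a product of a positive and a negative Dehn twist about the two cores, one checks on a spanning arc that the image lies strictly to the right, hence $FT > 0$; while $FT(U)=0$ for the unknot. The two knots being stably equivalent (both stably trivial, since $T_{2,3}\#-T_{2,3}$ is slice and slice knots are stably equivalent to the unknot by \cite{Ho17}) completes the argument.

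The main obstacle I anticipate is the bookkeeping for the FDTC of a connected-sum (plumbed) monodromy: one has to be careful that "$FT$ of a connected sum" is computed on the correct surface with the correct single boundary curve, and that the local picture near the arc really does certify right-veeringness rather than accidentally being left-veering or neutral. An alternative that sidesteps this is to instead take $K_1 = T_{2,3}\# -T_{2,3}$ and $K_2$ a \emph{different} slice fibered knot with a manifestly \emph{non}-right-veering monodromy (e.g.\ a fibered knot built as a Murasugi sum involving a negative Hopf band, which has a left-veering factor), both being stably equivalent to the unknot; then the contrast in right-veeringness is immediate from the presence of the negative band and Proposition~\ref{arcs-FT}. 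Either way, the heart of the proof is the single observation that knot Floer stable equivalence is a \emph{concordance-type} invariant and therefore blind to the monodromy's twisting, whereas right-veeringness and FDTC are not.
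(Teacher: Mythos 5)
Your overall strategy (exhibit two stably equivalent fibered knots with different right‑veering status and FDTC) matches the paper's, but your concrete example fails at its key computational step. You claim that the monodromy of $T_{2,3}\#-T_{2,3}$ is right‑veering with $FT>0$. It is not. The fiber surface of a connected sum is the boundary sum of the two fibers, and the monodromy restricts to the monodromy of $-T_{2,3}$ on the second summand; since $-T_{2,3}$ is left‑veering, any properly embedded arc supported in that summand is moved to the left. Hence $K\#-K$ sends some arcs right and some arcs left, so it is neither right‑ nor left‑veering, and by Proposition~\ref{arcs-FT}(1) its FDTC is forced to be $0$. (The paper states exactly this at the start of Section~\ref{sec:fdtcsliceknots}.) Your comparison knot is also problematic: the unknot's fiber is a disk, where the FDTC is not defined (the definition requires $\chi(S)<0$), and the identity monodromy \emph{is} right‑veering under the paper's definition, not non‑right‑veering as you assert. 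So your proposed pair distinguishes nothing in the direction you intend. (A smaller error: $FT(T_{2,3})=1/6$, not $1/2$.)

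The fix is to run your ``$\#P\#-P$'' mechanism in the correct direction, which is what the paper does: start with a fibered knot $K$ that genuinely has nonzero FDTC --- the paper takes the $(2,1)$-cable of the figure-eight knot, which has positive FDTC by \cite{KR13} and is therefore right-veering --- and compare it with $K\#P\#-P$ for any fibered knot $P$. The latter is concordant, hence knot Floer stably equivalent, to $K$ by \cite{Ho17}, yet its monodromy moves arcs in the $-P$ summand to the left, so it is not right-veering and has $FT=0$. The two knots are stably equivalent but differ both in right-veeringness and in FDTC. Your instinct that stable equivalence is a concordance-type invariant blind to monodromy twisting is exactly right; you just need the right-veering member of the pair to be a knot whose positivity of FDTC is already established (a cable, not a connected sum with a mirror).
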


\begin{proof}
Let $K$ be the $(2,1)$-cable of the figure-eight knot, which has positive
FDTC by \cite{KR13} so it is right veering. 
Since $K\#P \#-P$ is concordant to $K$ for any fibered knot $P$, 
the connected sum $K\#P \#-P$ is stably equivalent to $K$. The monodromy of $K\#P \#-P$ sends
some arcs to the right and some to the left. Thus, $K\#P \#-P$ is not right-veering, and the fractional Dehn twist coefficient of $K\#P \#-P$ is zero.
(See Proposition~\ref{arcs-FT} and the discussion that follows it.) %The mirror of the (2,1)-cable is left-veering. Both knots are knot Floer stably equivalent to the unknot. Thus right-veeringness is not an invariant of knot Floer stable equivalence.
\end{proof}

%We show similarly that the knot Floer stable equivalence class of a fibered knot fails to detect the fractional Dehn twist coefficient of the fiber monodromy

%\begin{lemma}
%The fractional Dehn twist coefficient of a fibered knot is not an invariant of knot Floer stable equivalence class.
%\end{lemma}

%\begin{proof}
%Let $K$ be the $(2,1)$-cable of the figure-eight knot, which has positive fractional Dehn twist coefficient
%by \cite{KR13} so it is right veering. Let $P$ be any fibered knot. The connected sum $K\#P \#-P$ is stably equivalent to $K$.
%The monodromy of $K\#P \#-P$ sends some arcs to the right and some to the left. 
%The figure-eight knot $4_1$ is a fibered knot and its monodromy is pseudo-Anosov. Thus its $(2,1)$-cable is also fibered and has fractional Dehn twist coefficient $\frac{1}{2}$~\cite{KR13}.
%The $(2,1)$-cable of $4_1$ is knot Floer stably equivalent to the unknot. Thus the fractional Dehn twist coefficient is not an invariant of knot Floer stable equivalence. 
%\end{proof}

In the next section, we will study the fractional Dehn twist coefficient of a special class of fibered knots.

\section{Fractional Dehn twist coefficient of fibered slice knots}\label{sec:fdtcsliceknots}

Many simple examples of fibered slice knots have monodromies with the fractional Dehn twist coefficient zero. For example, given any fibered knot $K$, the slice knot $K \# -  m(K)$ has the FDTC zero. One way to see this is that we can assume there is some arc that the monodromy of $K$ sends to the right, then the corresponding arc is sent to the left by the monodromy for $-m(K)$ and so $K \# -  m(K)$ sends arcs to both the right and the left.

\begin{figure}
  \centering
      \includegraphics[width=0.4\textwidth]{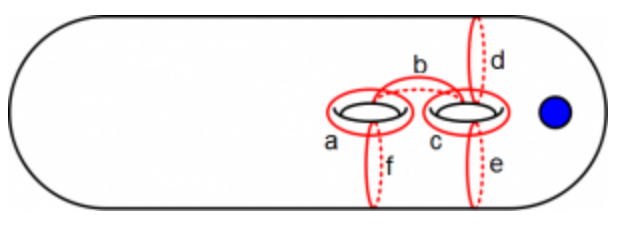}
      \includegraphics[width=0.4\textwidth]{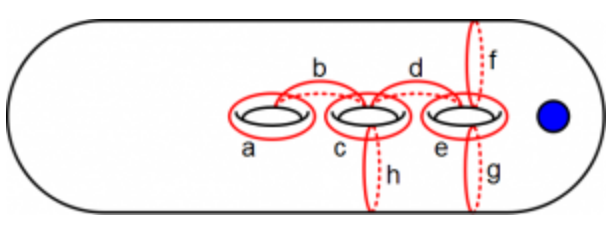}
  \caption{Figures from~\cite{LM20} showing curves whose Dehn twists are generators of the mapping class group of a fiber surface of genus two and three with one puncture.
}
  \label{fig:SurfaceGens}
\end{figure}

There are many more examples of fibered slice knots with fractional Dehn twist coefficient zero. 
For instance, the knot $8_{20}$ is slice and fibered, and has monodromy with fractional Dehn twist coefficient zero \cite[Example 2.8]{KR13}. Figure \ref{fig:fiberedSliceMonodromies} gives several more examples of fibered slice knots with vanishing fractional Dehn twist coefficient $FT = 0$.

\begin{figure}
     \centering
     \begin{subfigure}[b]{0.3\textwidth}
         \centering
         \includegraphics[width=\textwidth]{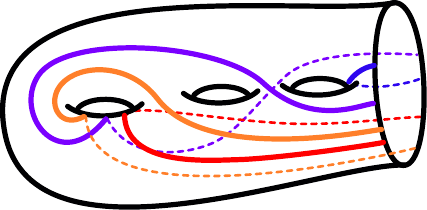}
         \caption{The monodromy $\phi$ for $8_9$ is $abcDEF$}
         \label{fig:8_9}
     \end{subfigure}
     \hfill
     \begin{subfigure}[b]{0.3\textwidth}
         \centering
         \includegraphics[width=\textwidth]{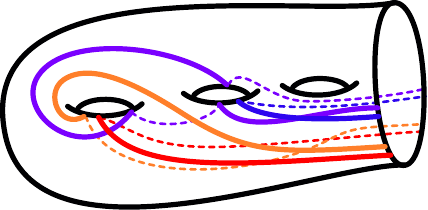}
         \caption{The monodromy $\phi$ for $9_{27}$ is $abCDEf$.}
         \label{fig:9_27}
     \end{subfigure}
     \hfill
     \begin{subfigure}[b]{0.3\textwidth}
         \centering
         \includegraphics[width=\textwidth]{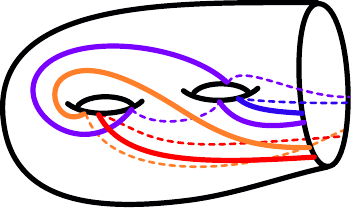}
         \caption{The monodromy $\phi$ for $10_{137}$ is $abCdF$.}
         \label{fig:10_137}
     \end{subfigure}
     \\ \vspace{8mm}
     \begin{subfigure}[b]{0.3\textwidth}
         \centering
         \includegraphics[width=\textwidth]{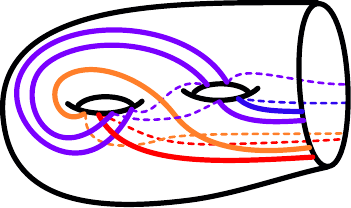}
         \caption{The monodromy $\phi$ for $10_{140}$ is $abbCDf$.}
         \label{fig:}
     \end{subfigure}
     \quad \quad 
     \begin{subfigure}[b]{0.3\textwidth}
         \centering
         \includegraphics[width=\textwidth]{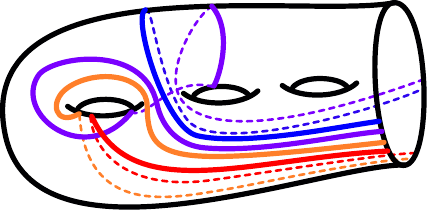} 
         \caption{The monodromy $\phi$ for $11a_{96}$ is $aBcDEf$.}
         \label{fig:11a_96}
     \end{subfigure}
        \caption{Fibered slice knots with vanishing fractional Dehn twist coefficient. Each figure shows a pair of arcs $x$ (in red) and $y$ (in blue) such that $\phi(x)$ (in orange) is to the right of $x$ and $\phi(y)$ (in purple) is to the left of $y$. The monodromies are presented using the conventions of Knot Info \cite{LM20}; see Figure \ref{fig:SurfaceGens}. A lowercase letter means a right-handed Dehn twist and an uppercase letter means a left-handed Dehn twist. A word is read from right to left; thus, $aB$ means first perform a left-handed twist around $b$ then perform a right-handed twist around~$a$.
        %\todo[inline]{Changed the ``orange'' to a darker orange color and changed solid arcs to dotted if they are ``behind'' the surface. Easier to see now?} 
        }
        \label{fig:fiberedSliceMonodromies}
\end{figure}

Despite the numerous examples of fibered slice knots with vanishing fractional Dehn twist coefficient, the next proposition shows there exist many non-vanishing examples.

\begin{proposition} \label{prop:FTslice}
For every integer $ \vert p \vert \geq 2$, there exist fibered, slice knots with fractional Dehn twist coefficient $\frac{1}{p}$. 
\end{proposition}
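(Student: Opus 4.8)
The plan is to realize each fractional Dehn twist coefficient $\frac{1}{p}$ by a concrete, symmetric open book. Recall from Theorem~\ref{s3-fibered-thm} that the only values attained by $FT(K)$ for fibered $K\subset S^3$ are $0$ and $\frac{1}{n}$ with $|n|\geq 2$, so we are simply asking that every permissible nonzero value is attained by a \emph{slice} fibered knot. The natural candidates are the fibered knots whose monodromy is periodic near the boundary with rotation number $\frac{1}{p}$ — these already exist (e.g.\ certain torus knots or twisted torus knots have $FT = \frac{1}{p}$), so the real content is to arrange sliceness simultaneously. The cleanest route is to build the knot with a visible symmetry forcing it to be (strongly) negative amphichiral or ribbon.

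The key steps, in order, would be: \textbf{(1)} For each $p$ with $|p|\geq 2$, exhibit an explicit fibered knot $K_p$ whose fiber surface $\Sigma_p$ carries a finite-order diffeomorphism $\phi_p$ that is periodic and whose action near $\partial\Sigma_p$ is rotation by $\frac{1}{p}$; then $FT(K_p)=\frac{1}{p}$ by the computation of Gabai and Kazez--Roberts underlying Theorem~\ref{s3-fibered-thm} (alternatively, one reads $FT$ off directly from the periodic model, as in \cite[Corollary~4.3]{KR13}). A convenient family: take a surface built symmetrically out of two copies of a building block exchanged by an involution, with a $\ZZ/p$ symmetry acting in the ``boundary'' direction. \textbf{(2)} Arrange that $K_p$ admits an orientation-reversing symmetry (or a band presentation) showing $K_p$ is slice — for instance, build $\Sigma_p$ and $\phi_p$ so that $K_p = J_p \# -m(J_p)$ for a suitable fibered knot $J_p$, \emph{except} that we cannot use the naive connected sum, since $FT(J\#-m(J))=0$ by the discussion after Proposition~\ref{arcs-FT}. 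So instead one wants a genuinely connected (prime) fibered slice knot; the device is to take a knot $J_p$ together with a periodic monodromy and then form a \emph{symmetric union} or a twisted satellite of the form: $K_p$ is the closure obtained by plumbing/Murasugi-summing $J_p$ and its mirror along an arc in a way that is invariant under a $\ZZ/p$ action. \textbf{(3)} Verify sliceness of $K_p$: exhibit $\frac{1}{2}\chi(\Sigma_p)$ (or the appropriate number of) band moves reducing $K_p$ to an unlink, or identify a slice disk coming from the symmetry; this is the same style of argument as in Proposition~\ref{examplesthatkillthedream}, where band moves reduce a braid closure to a connected sum with a slice (here: genus-zero) piece. \textbf{(4)} Confirm $FT(K_p)=\frac{1}{p}$ for the final symmetric model: since $FT$ is determined by the monodromy rel boundary and we have built the monodromy with explicit periodic-near-boundary behavior of rotation number $\frac{1}{p}$, Theorem~\ref{gabai-thm} / \cite[Corollary~4.3]{KR13} give the value; one only needs to check that the plumbing construction does not alter the near-boundary rotation.

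The main obstacle is step~(2)--(3): producing a fibered slice knot that is \emph{prime} (equivalently connected, not a connected sum) yet has a periodic-near-boundary monodromy with the prescribed rotation number. Connected sums are the easy source of slice fibered knots but they always give $FT=0$, so one must work harder. I expect the right construction is an explicit family — e.g.\ a twisted torus knot or a knot built by a $\frac{1}{p}$-symmetric annular plumbing of a positive and a negative Hopf band tower — for which one can both (a) draw the periodic monodromy and read off $FT = \frac{1}{p}$, and (b) write down an explicit ribbon move sequence. Once such a family is in hand, the remaining verifications (that the surface is the fiber surface, that the monodromy is as claimed, that $FT$ equals $\frac1p$) are routine given Theorems~\ref{gabai-thm} and~\ref{s3-fibered-thm} and the arc criterion of Proposition~\ref{arcs-FT}.
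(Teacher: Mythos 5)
There is a genuine gap: your proposal is a strategy outline, not a proof. Steps (1)--(4) describe properties you would like a family $K_p$ to have (periodic-near-boundary monodromy with rotation number $\frac1p$, a symmetric union or ribbon structure certifying sliceness), but you never actually exhibit such a family, and you acknowledge as much when you write that you ``expect the right construction'' to be a twisted torus knot or a symmetric Hopf-band plumbing. The entire content of the proposition is the existence of the examples, so deferring the construction leaves nothing proved. Moreover, the route you sketch is harder than necessary: you correctly observe that connected sums $J\#-m(J)$ force $FT=0$ and conclude one must build a prime knot with periodic-near-boundary monodromy, but you then get stuck exactly there.

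The missing idea is to use satellites rather than trying to engineer a periodic monodromy by hand. The paper's proof takes the $(p,1)$-cable of any fibered slice knot (e.g.\ $J\#-m(J)$ for $J$ fibered). Cables of fibered knots are fibered (Stallings, or by assembling the fibration from those of the companion and the pattern); cabling preserves concordance, and the $(p,1)$-cable of the unknot is the unknot, so the cable of a slice knot is slice; and $FT$ of the $(p,1)$-cable equals $\frac1p$ by \cite[Proposition 4.2]{KR13}. Note these monodromies are reducible, not periodic near the boundary, so your framing via the periodic case of Theorem~\ref{s3-fibered-thm} was steering you away from the construction that actually works. If you want to salvage your approach, you would need to produce the explicit family and verify both the monodromy computation and the ribbon moves, which is substantially more work than the cabling argument.
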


\begin{proof}
Our examples are $(p,1)$-cables of fibered, slice knots. Indeed, any cable of a fibered knot $K$ is well-known to be fibered. This follows from \cite{Stallings} or from explicitly building a fibration from the fibration of the companion knot $K$ and the fibration of the pattern torus knot. If $K$ and $K'$ are concordant, then their $(p,q)$-cables are also concordant (see, for example, \cite{HPC}). Thus, if $K$ is a slice knot, then the $(p,1)$-cable of $K$ is also slice. The fractional Dehn twist coefficient of the $(p,1)$-cable of $K$ is $1/p \not = 0$ by \cite[Proposition 4.2]{KR13}.
\end{proof}

The examples in Proposition \ref{prop:FTslice} are cable knots and thus they correspond to reducible monodromies. Restricting to fibered slice knots with pseudo-Anosov monodromies leads to the following question.

\begin{question}\label{sliceFDTC}

If $K$ is a fibered slice knot with pseudo-Anosov monodromy, then is the fractional Dehn twist coefficient zero?

\end{question}

Recall that a result of Gabai (see Theorem~\ref{gabai-thm} stated in the Introduction) establishes a relation between the 3-genus of a fibered knot and the FDTC of the fibration.
A positive answer to Question~\ref{sliceFDTC} would signify a connection between the fractional Dehn twist coefficient and the 4-ball genus.\\

\bibliography{BibliographyWISCON}

\newcommand{\etalchar}[1]{$^{#1}$}
\begin{thebibliography}{EVHM15}

\bibitem[AA19]{AA19}
Paolo Aceto and Antonio Alfieri.
\newblock On sums of torus knots concordant to alternating knots.
\newblock {\em Bull. Lond. Math. Soc.}, 51(2):327--343, 2019.

\bibitem[{Ale}23]{Al23}
J.~W. {Alexander}.
\newblock {A Lemma on Systems of Knotted Curves}.
\newblock {\em Proc. of the Natl. Acad. of Sci. U.S.A}, 9(3):93--95, Mar 1923.

\bibitem[{All}19]{Al19}
Samantha {Allen}.
\newblock {Concordances from differences of torus knots to $L$-space knots}.
\newblock {\em Proc. Amer. Math. Soc.}, Dec 2019.

\bibitem[Art25]{Ar25}
Emil Artin.
\newblock Theorie der {Z}\"{o}pfe.
\newblock {\em Abh. Math. Sem. Univ. Hamburg}, 4(1):47--72, 1925.

\bibitem[Bal08]{Bal08}
John~A. Baldwin.
\newblock Heegaard {F}loer homology and genus one, one-boundary component open
  books.
\newblock {\em J. Topol.}, 1(4):963--992, 2008.

\bibitem[BB05]{Bir05}
Joan~S Birman and Tara~E Brendle.
\newblock Braids: a survey.
\newblock In {\em Handbook of knot theory}, pages 19--103. Elsevier, 2005.

\bibitem[BE13]{BE13}
John~A. Baldwin and John~B. Etnyre.
\newblock Admissible transverse surgery does not preserve tightness.
\newblock {\em Math. Ann.}, 357(2):441--468, 2013.

\bibitem[BG15]{BG}
John~A. Baldwin and J.~Elisenda Grigsby.
\newblock Categorified invariants and the braid group.
\newblock {\em Proc. Amer. Math. Soc.}, 143(7):2801--2814, 2015.

\bibitem[Bir16]{Bir16}
Joan~S Birman.
\newblock {\em Braids, Links, and Mapping Class Groups.(AM-82)}, volume~82.
\newblock Princeton University Press, 2016.

\bibitem[BVVV13]{BVV}
John~A. Baldwin, David~Shea Vela-Vick, and Vera V\'{e}rtesi.
\newblock On the equivalence of {L}egendrian and transverse invariants in knot
  {F}loer homology.
\newblock {\em Geom. Topol.}, 17(2):925--974, 2013.

\bibitem[Deh94]{De94}
Patrick Dehornoy.
\newblock Braid groups and left distributive operations.
\newblock {\em Trans. Amer. Math. Soc.}, 345(1):115--150, 1994.

\bibitem[EN85]{EN85}
David Eisenbud and Walter Neumann.
\newblock {\em Three-dimensional link theory and invariants of plane curve
  singularities}, volume 110 of {\em Annals of Mathematics Studies}.
\newblock Princeton University Press, Princeton, NJ, 1985.

\bibitem[EVHM15]{EV15}
John~B. Etnyre and Jeremy Van Horn-Morris.
\newblock Monoids in the mapping class group.
\newblock In {\em Interactions between low-dimensional topology and mapping
  class groups}, volume~19 of {\em Geom. Topol. Monogr.}, pages 319--365. Geom.
  Topol. Publ., Coventry, 2015.

\bibitem[Fel16]{Fe16}
Peter Feller.
\newblock Optimal cobordisms between torus knots.
\newblock {\em Comm. Anal. Geom.}, 24(5):993--1025, 2016.

\bibitem[FGR{\etalchar{+}}99]{FGRRW99}
R.~Fenn, M.~T. Greene, D.~Rolfsen, C.~Rourke, and B.~Wiest.
\newblock Ordering the braid groups.
\newblock {\em Pacific J. Math.}, 191(1):49--74, 1999.

\bibitem[FH19]{FH19}
Peter Feller and Diana Hubbard.
\newblock Braids with as many full twists as strands realize the braid index.
\newblock {\em J. Topol.}, 12(4):1069--1092, 2019.

\bibitem[Gab97]{Ga97}
David Gabai.
\newblock Problems in foliations and laminations.
\newblock In {\em Geometric topology ({A}thens, {GA}, 1993)}, volume~2 of {\em
  AMS/IP Stud. Adv. Math.}, pages 1--33. Amer. Math. Soc., Providence, RI,
  1997.

\bibitem[Ghi08]{Gh08}
Paolo Ghiggini.
\newblock Knot {F}loer homology detects genus-one fibred knots.
\newblock {\em Amer. J. Math.}, 130(5):1151--1169, 2008.

\bibitem[GO89]{GO89}
David Gabai and Ulrich Oertel.
\newblock Essential laminations in {$3$}-manifolds.
\newblock {\em Ann. of Math. (2)}, 130(1):41--73, 1989.

\bibitem[Hay17]{Hay17}
Kyle Hayden.
\newblock Quasipositive links and {S}tein surfaces.
\newblock {\em arXiv preprint arXiv:1703.10150}, 2017.

\bibitem[HHT19]{HHT19}
Dongtai He, Diana Hubbard, and Linh Truong.
\newblock On the upsilon invariant of fibered knots and right-veering open
  books, 2019.

\bibitem[HKM07]{HKM07}
Ko~Honda, William~H. Kazez, and Gordana Mati\'{c}.
\newblock Right-veering diffeomorphisms of compact surfaces with boundary.
\newblock {\em Invent. Math.}, 169(2):427--449, 2007.

\bibitem[HKM08]{HKM08}
Ko~Honda, William~H. Kazez, and Gordana Mati\'{c}.
\newblock Right-veering diffeomorphisms of compact surfaces with boundary.
  {II}.
\newblock {\em Geom. Topol.}, 12(4):2057--2094, 2008.

\bibitem[HM18]{HM18}
Matthew Hedden and Thomas~E. Mark.
\newblock Floer homology and fractional {D}ehn twists.
\newblock {\em Adv. Math.}, 324:1--39, 2018.

\bibitem[HO08]{HO08}
Matthew Hedden and Philip Ording.
\newblock The {O}zsv\'{a}th-{S}zab\'{o} and {R}asmussen concordance invariants
  are not equal.
\newblock {\em Amer. J. Math.}, 130(2):441--453, 2008.

\bibitem[Hom17]{Ho17}
Jennifer Hom.
\newblock A survey on {H}eegaard {F}loer homology and concordance.
\newblock {\em J. Knot Theory Ramifications}, 26(2):1740015, 24, 2017.

\bibitem[HPC18]{HPC}
Matthew Hedden and Juanita Pinzon-Caicedo.
\newblock Satellites of infinite rank in the smooth concordancegroup, 2018.

\bibitem[HW16]{HW16}
Jennifer Hom and Zhongtao Wu.
\newblock Four-ball genus bounds and a refinement of the
  {O}zv\'{a}th-{S}zab\'{o} tau invariant.
\newblock {\em J. Symplectic Geom.}, 14(1):305--323, 2016.

\bibitem[IK15]{IK15}
Tetsuya Ito and Keiko Kawamuro.
\newblock Overtwisted discs in planar open books.
\newblock {\em Internat. J. Math.}, 26(3):1550027, 29, 2015.

\bibitem[IK17]{IK17}
Tetsuya Ito and Keiko Kawamuro.
\newblock Essential open book foliations and fractional {D}ehn twist
  coefficient.
\newblock {\em Geom. Dedicata}, 187:17--67, 2017.

\bibitem[IK19]{IK19}
Tetsuya Ito and Keiko Kawamuro.
\newblock Quasi-right-veering braids and nonloose links.
\newblock {\em Algebr. Geom. Topol.}, 19(6):2989--3032, 2019.

\bibitem[Ito11]{It11}
Tetsuya Ito.
\newblock Braid ordering and knot genus.
\newblock {\em J. Knot Theory Ramifications}, 20(9):1311--1323, 2011.

\bibitem[Kho00]{Kh00}
Mikhail Khovanov.
\newblock A categorification of the {J}ones polynomial.
\newblock {\em Duke Math. J.}, 101(3):359--426, 2000.

\bibitem[KM94]{KM94}
P.~B. Kronheimer and T.~S. Mrowka.
\newblock The genus of embedded surfaces in the projective plane.
\newblock {\em Math. Res. Lett.}, 1(6):797--808, 1994.

\bibitem[KR13]{KR13}
William~H. Kazez and Rachel Roberts.
\newblock Fractional {D}ehn twists in knot theory and contact topology.
\newblock {\em Algebr. Geom. Topol.}, 13(6):3603--3637, 2013.

\bibitem[Lee05]{Le05}
Eun~Soo Lee.
\newblock An endomorphism of the {K}hovanov invariant.
\newblock {\em Adv. Math.}, 197(2):554--586, 2005.

\bibitem[Liv04]{Li04}
Charles Livingston.
\newblock Computations of the {O}zsv\'{a}th-{S}zab\'{o} knot concordance
  invariant.
\newblock {\em Geom. Topol.}, 8:735--742, 2004.

\bibitem[Liv18]{Li18}
Charles Livingston.
\newblock Concordances from connected sums of torus knots to {$L$}-space knots.
\newblock {\em New York J. Math.}, 24:233--239, 2018.

\bibitem[LM20]{LM20}
Charles Livingston and Allison~H. Moore.
\newblock Knotinfo: Table of knots invariants.
\newblock \url{http://www.indiana.edu/~knotinfo}, Feb. 11 2020.

\bibitem[Lob11]{Lo11}
Andrew Lobb.
\newblock Computable bounds for {Rasmussen}'s concordance invariant.
\newblock {\em Compositio Mathematica}, 147(2):661--668, 2011.

\bibitem[LVC18]{LV18}
Charles Livingston and Cornelia~A. Van~Cott.
\newblock The four-genus of connected sums of torus knots.
\newblock {\em Math. Proc. Cambridge Philos. Soc.}, 164(3):531--550, 2018.

\bibitem[Mal04]{Ma04}
A.~V. Malyutin.
\newblock Writhe of (closed) braids.
\newblock {\em Algebra i Analiz}, 16(5):59--91, 2004.

\bibitem[Man16]{Ma16}
Ciprian Manolescu.
\newblock An introduction to knot {F}loer homology.
\newblock In {\em Physics and mathematics of link homology}, volume 680 of {\em
  Contemp. Math.}, pages 99--135. Amer. Math. Soc., Providence, RI, 2016.

\bibitem[Mar19]{Ma19}
Gage Martin.
\newblock Annular {Rasmussen} invariants: {Properties} and 3-braid
  classification.
\newblock {\em arXiv:1909.09245 [math]}, September 2019.
\newblock arXiv: 1909.09245.

\bibitem[Mur74]{Mu74}
Kunio Murasugi.
\newblock {\em On closed {$3$}-braids}.
\newblock American Mathematical Society, Providence, R.I., 1974.
\newblock Memoirs of the American Mathmatical Society, No. 151.

\bibitem[Ni07]{Ni07}
Yi~Ni.
\newblock Knot {F}loer homology detects fibred knots.
\newblock {\em Invent. Math.}, 170(3):577--608, 2007.

\bibitem[OS03]{OS03}
Peter Ozsv\'{a}th and Zolt\'{a}n Szab\'{o}.
\newblock Knot {F}loer homology and the four-ball genus.
\newblock {\em Geom. Topol.}, 7:615--639, 2003.

\bibitem[OS04]{OS04GenusBounds}
Peter Ozsv\'{a}th and Zolt\'{a}n Szab\'{o}.
\newblock Holomorphic disks and genus bounds.
\newblock {\em Geom. Topol.}, 8:311--334, 2004.

\bibitem[OSS17]{OSS17}
Peter~S. Ozsv\'{a}th, Andr\'{a}s~I. Stipsicz, and Zolt\'{a}n Szab\'{o}.
\newblock Concordance homomorphisms from knot {F}loer homology.
\newblock {\em Adv. Math.}, 315:366--426, 2017.

\bibitem[Pla04]{Pl04}
Olga Plamenevskaya.
\newblock Bounds for the {T}hurston-{B}ennequin number from {F}loer homology.
\newblock {\em Algebr. Geom. Topol.}, 4:399--406, 2004.

\bibitem[Pla06]{Pl06}
Olga Plamenevskaya.
\newblock Transverse knots and {Khovanov} homology.
\newblock {\em Mathematical Research Letters}, 13(4):571--586, July 2006.

\bibitem[Pla18]{Pl18}
Olga Plamenevskaya.
\newblock Braid monodromy, orderings and transverse invariants.
\newblock {\em Algebr. Geom. Topol.}, 18(6):3691--3718, 2018.

\bibitem[Ras03]{Ra03}
Jacob~Andrew Rasmussen.
\newblock {\em Floer homology and knot complements}.
\newblock ProQuest LLC, Ann Arbor, MI, 2003.
\newblock Thesis (Ph.D.)--Harvard University.

\bibitem[Ras10]{Ra10}
Jacob Rasmussen.
\newblock Khovanov homology and the slice genus.
\newblock {\em Invent. Math.}, 182(2):419--447, 2010.

\bibitem[Rud93]{Ru93}
Lee Rudolph.
\newblock Quasipositivity as an obstruction to sliceness.
\newblock {\em Bull. Amer. Math. Soc. (N.S.)}, 29(1):51--59, 1993.

\bibitem[Sat18]{Sa18}
Kouki Sato.
\newblock A full-twist inequality for the {$\nu^+$}-invariant.
\newblock {\em Topology Appl.}, 245:113--130, 2018.

\bibitem[Sav12]{Sa12}
Nikolai Saveliev.
\newblock {\em Lectures on the topology of 3-manifolds}.
\newblock De Gruyter Textbook. Walter de Gruyter \& Co., Berlin, revised
  edition, 2012.
\newblock An introduction to the Casson invariant.

\bibitem[Shu07]{Sh07}
Alexander~N. Shumakovitch.
\newblock Rasmussen invariant, slice-bennequin inequality, and sliceness of
  knots.
\newblock {\em Journal of Knot Theory and Its Ramifications},
  16(10):1403--1412, December 2007.

\bibitem[Sta78]{Stallings}
John~R. Stallings.
\newblock Constructions of fibred knots and links.
\newblock In {\em Algebraic and geometric topology ({P}roc. {S}ympos. {P}ure
  {M}ath., {S}tanford {U}niv., {S}tanford, {C}alif., 1976), {P}art 2}, Proc.
  Sympos. Pure Math., XXXII, pages 55--60. Amer. Math. Soc., Providence, R.I.,
  1978.

\bibitem[Zha13]{BZ13}
Bohua Zhan.
\newblock Computations in bordered heegaard floer homology.
\newblock \url{https://github.com/bzhan/bfh_python}, 2013.

\end{thebibliography}

\end{document}